\newcommand{\Cay}{\mathrm{Cay}}
\newcommand{\even}{\mathrm{even}}
\newcommand{\odd}{\mathrm{odd}}
\newcommand{\uparr}{\ensuremath{\hspace{2pt}\uparrow\hspace{-3pt}}}
\newcommand{\dwnarr}{\ensuremath{\hspace{2pt}\downarrow\hspace{-3pt}}}
\theoremstyle{plain}
\newtheorem{theorem}{Theorem}[section]
\newtheorem{lemma}[theorem]{Lemma}
\newtheorem{corollary}[theorem]{Corollary}
\newtheorem{proposition}[theorem]{Proposition}
\theoremstyle{definition}
\newtheorem{conjecture}[theorem]{Conjecture}
\newtheorem{question}[theorem]{Question}
\newtheorem{remark}[theorem]{Remark}
\title[Sensitivity]{On sensitivity in bipartite Cayley graphs}
\author[I. Garc\'{i}a-Marco]{Ignacio Garc\'{i}a-Marco}
\address{Facultad de Ciencias, Universidad de La Laguna, La Laguna, Spain}
\author[K. Knauer]{Kolja Knauer$^*$}
\address{Aix Marseille Univ, Universit\'e de Toulon, CNRS, LIS, Marseille, France\\Departament de Matem\`atiques i Inform\`atica,
Universitat de Barcelona, Spain}
\keywords{  \\ \ \ \ $ ^*$ Corresponding author }
\subjclass[2010]{06A11, 06A07, 20M99}
\begin{document}

\begin{abstract}
 Huang proved that every set of more than half the vertices of the $d$-dimensional hypercube $Q_d$ induces a subgraph of maximum degree at least $\sqrt{d}$, which is tight by a result of Chung, F\"uredi, Graham, and Seymour. Huang asked whether similar results can be obtained for other highly symmetric graphs.
 

First, we present three infinite families of Cayley graphs of unbounded degree that contain induced subgraphs of maximum degree $1$ on more than half the vertices. In particular, this refutes a conjecture of Potechin and Tsang, for which first counterexamples were shown recently by Lehner and Verret. The first family consists of dihedrants and contains a sporadic counterexample encountered earlier by Lehner and Verret. The second family are star graphs, these are edge-transitive Cayley graphs of the symmetric group. All members of the third family are $d$-regular containing an induced matching on a $\frac{d}{2d-1}$-fraction of the vertices. This is largest possible and answers a question of Lehner and Verret.

Second, we consider Huang's lower bound for graphs with subcubes and show that the corresponding lower bound is tight for products of Coxeter groups of type $\mathbf{A_n}$, $\mathbf{I_2}(2k+1)$, and most exceptional cases. We believe that Coxeter groups are a suitable generalization of the hypercube with respect to Huang's question.

Finally, we show that induced subgraphs on more than half the vertices of Levi graphs of projective planes and of the Ramanujan graphs of Lubotzky, Phillips, and Sarnak have unbounded degree. This gives classes of Cayley graphs with properties similar to the ones provided by Huang's results. However, in contrast to Coxeter groups these graphs have no subcubes.
\end{abstract}

\maketitle

\section{Introduction}\label{introduction}

Recently, Huang~\cite{huang2019induced} proved the Sensitivity Conjecture~\cite{Sensitivity} by showing that an induced subgraph on more than half of the vertices of the $d$-dimensional hypercube $Q_d$ has maximum degree at least $\sqrt{d}$. For a graph $G=(V,E)$ denote by $\alpha(G)$ the size of a largest independent set in $G$, by $\Delta(G)$ its maximum degree, and for a $K\subseteq V$ by $G[K]$ the subgraph induced by $K$. Define the \emph{sensitivity} $\sigma(G)$ of $G$ as the minimum value $\Delta(G[K])$ among all the $K\subseteq V$ on more than $\alpha(G)$ vertices.
Since in a regular bipartite $G$ one has $\alpha(G)=\frac{|V|}{2}$, Huang's result can be expressed as $\sigma(Q_d)\geq\sqrt{d}$. Huang asks what can be said about $\sigma(G)$ if $G$ is a ``nice'' graph with high symmetry. Further, since by a result of Chung, F\"uredi, Graham, and Seymour~\cite{CFGS88} the bound for $Q_d$ is tight, he wonders for which graphs a tight bound on the sensitivity follows from his method.                                                                                                                                                                                                                                      

The present paper studies both of these questions by considering (simple, undirected\footnote{Even if graphs are considered undirected, in figures we use arcs to represent generators of order larger than $2$ to increase readability.}, right) \emph{Cayley graphs} of groups to be ``nice'' with high symmetry. That is, for a group $\Gamma$ and a subset $C\subseteq\Gamma$ define $\Cay(\Gamma,C)$ with $\{x,y\}\in E$ if and only if $x^{-1}y\in C$. First positive results in this direction were obtained by Alon and Zheng~\cite{alon2020unitary}, who proved that in a $d$-regular Cayley graph $G$ of an elementary abelian 2-group, then $\sigma(G) \geq \sqrt{d}$. Then recently, Potechin and Tsang~\cite{potechin2020conjecture} showed that for every $d$-regular Cayley graph $G$ of an abelian group any set of vertices of more than half the vertices induces a subgraph with maximum degree at least $\sqrt{{d}/{2}}$ -- hence answering Huang's question in the bipartite case.
Moreover, they conjectured this lower bound to hold for Cayley graphs of general groups. However shortly after, Lehner and Verret~\cite{verret2020counterexamples} found a bipartite cubic Cayley graph $G$ of a dihedral group with $\sigma(G)=1<\sqrt{3/2}$ -- thus, refuting the above conjecture. Moreover, they construct an infinite family of bipartite Cayley graphs of $2$-groups of unbounded degree, with $\sigma(G)=1$ for every member $G$ of the family. Thus, concerning Huang's questions, $\sigma(G)$ cannot be bounded from below by a function of the degree for general Cayley graphs.

\bigskip

In the first part of the present paper, we give three more \emph{insensitive} families of Cayley graphs, i.e., they have unbounded degree but $\sigma(G)=1$ for all their members $G$. 

The first family are bipartite \emph{dihedrants}, i.e., Cayley graphs of the dihedral group (Theorem~\ref{thm:mathchindihedral}). The smallest member of this family is the graph presented in~\cite[Section 3]{verret2020counterexamples} as well as the smallest non-cyclic, bipartite Cayley graph with $\sigma=1$ among all groups.

The second family are the \emph{star-graphs}~\cite{AK89}, i.e., Cayley graphs of $S_n$ with respect to all transpositions containing $1$. These graphs, that were initially motivated as an ``attractive alternative'' to the hypercube (see~\cite{ADK94}) form a family of bipartite edge-transitive Cayley graphs. The first non-trivial member is the \emph{Nauru graph} $G(12,5)$, see Figure~\ref{fig:homgraphstar} and~\cite{nauru} for a beautiful collection of models. Another feature that distinguishes this family from the previous one is that they are Cayley graphs with respect to a minimal set of generators of the group. We show that besides their very high symmetry star graphs have sensitivity $1$ (Theorem~\ref{thm:stargraph}). 

The third family consists of $d$-regular Cayley graphs that have an induced subgraph of maximum degree $1$  on a $\frac{d}{2d-1}$-fraction of the vertices (Theorem~\ref{thm:alternatingtight}). This is largest possible in a $d$-regular graph and settles a question posed in~\cite[Remark 2]{verret2020counterexamples}. In particular, we find the smallest such graphs and construct bipartite tight Cayley graphs by using the Kronecker double cover (Corollary~\ref{cor:bipdouble}). 

\bigskip

The second part of the paper concerns the question of when $\sigma$ can be bounded from below in a tight way. A first answer to this could be that many groups including dihedral groups admit Cayley graphs that are isomorphic to Cayley graphs of abelian groups, see~\cite{morris2020families}. Hence, in the bipartite case their sensitivity admits a lower bound in term of their degree by~\cite{potechin2020conjecture}.
Also, in~\cite[Remark 4]{verret2020counterexamples}, the authors describe their groups as close to abelian (dihedral groups have a cyclic group of index $2$, while $2$-groups are nilpotent). 
They ask for a natural family of Cayley graphs of non-abelian groups for which $\sigma$ grows in terms of the degree. 

To this end consider the following easy consequence of Huang's result. If a bipartite Cayley graph $G$ has a largest hypercube of dimension $\kappa(G)$ as a subgraph, then $\sigma(G)\geq\sqrt{\kappa(G)}$ (Proposition~\ref{pr:deltasubgraph})\footnote{Note that this observation is also essential for the result for abelian groups in~\cite{potechin2020conjecture}.}. In light of the second part of Huang's question it is thus natural to ask when this bound is tight. Clearly, all the three above families and also the family of~\cite{verret2020counterexamples} have $\kappa\equiv 1$ and hence they give tight examples for this bound. In~\cite{CFGS88}, Chung, F\"uredi, Graham, and Seymour show that Huang's bound is tight for the hypercube itself, i.e., $\sigma(Q_d)=\lceil\sqrt{d}\rceil$. We generalize this construction to sublattices of the hypercube (Lemma~\ref{lem:bound}).

We obtain infinite families of Cayley graphs with unbounded $\kappa$, where Huang's lower bound is tight. Namely, we study Coxeter groups. Our main result here is that the Cayley graph $G$ of a Coxeter group of type $\mathbf{A_n}$ or $\mathbf{I_2}(2k+1)$ as well as their direct products satisfy $\sigma(G)=\lceil\sqrt{\kappa(G)}\rceil$ (Corollary~\ref{cor:An}).  We furthermore extend this result to type $\mathbf{I_2}(n)\times \mathbf{I_2}(n')$ (Theorem~\ref{th:small}) as well as to many small Coxeter groups with the help of a computer (Table~\ref{tab:Q}). Moreover, we show that graphs $G$ of Coxeter groups of type $\mathbf{B_n}$ and $\mathbf{D_n}$ satisfy $\sigma(G)\leq\lceil\sqrt{\kappa(G)}\rceil+1$ (Theorem~\ref{th:coxeter}). 
We conjecture, that for every Cayley graph $G$ of a Coxeter group $\sigma(G)=\lceil\sqrt{\kappa(G)}\rceil$ (Conjecture~\ref{conjecture}).

Next, we study the sensitivity of bipartite Cayley graphs in the absence of cubes, i.e., where Proposition~\ref{pr:deltasubgraph} cannot be applied. We show that the Levi graphs of projective planes have unbounded sensitivity (Corollary~\ref{cor:pp}). Further we show that (Kronecker double covers of) the Ramanujan graphs of Lubotzky, Phillips, and Sarnak have unbounded sensitivity (Corollary~\ref{cor:Ramanujan}). Thus, providing families of cube-free, bipartite Cayley graphs that behave similarly to the hypercube with respect to sensitivity. The second family in particular has unbounded girth.

In the final section, after some concluding remarks we give an outlook on sensitivity in non-bipartite Cayley graphs. We show that the first guess on how to generalize the hypercube to higher chromatic number fails (Theorem~\ref{thm:Z3}).

Our experimental results were obtained combining SageMath~\cite{sage}, GAP~\cite{GAP4}, and CPLEX~\cite{cplex2009v12}.

\section{The dihedral group}

Let $D_n$ denote the dihedral group of symmetries of a regular $n$-gon, that is, the group \[ D_n = \langle a, b \, \vert \, a^n = b^2 = (ab)^2 = 1\rangle = \{1,a,\ldots,a^{n-1},b,ab,\ldots,a^{n-1}b\}.\]

 For a positive integer $m$, we denote by $[m]_3 \in \{1,2\}$ the right-most nonzero entry in its representation in base $3$. For example, for $m = 33$ we have that $m = 3^3 + 2 \cdot 3$ and, thus, $[m]_3 = 2$. 

The following result provides a family of bipartite $(d+1)$-regular dihedrants with sensitivity $1$ for all $d \geq 0$.

\begin{theorem} \label{thm:mathchindihedral} Let $n = 3^d$ and consider $G = {\rm Cay}(D_n, C)$, where $C = \{a^{3^i} b  \mid 0 \leq i \leq d\} \subseteq D_n$. The set $M = \{a^{i} \mid [i]_3 = 1\} \cup \{ a^{i} b \mid [i]_3 = 2\} \cup \{1,b\}$ induces a matching with $n+1$ vertices.  As a consequence, $\sigma(G) = 1$.
\end{theorem}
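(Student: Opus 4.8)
The plan is to verify directly that the proposed vertex set $M$ induces a perfect matching on its $n+1$ vertices, and then invoke the fact that $\sigma(G)=1$ for any regular bipartite graph admitting an induced subgraph of maximum degree $1$ on more than half the vertices. Since $G$ is $(d+1)$-regular bipartite on $2n = 2\cdot 3^d$ vertices, we have $\alpha(G)=n$, so it suffices to exhibit an induced subgraph of maximum degree $1$ on strictly more than $n$ vertices; the set $M$ has $n+1$ vertices, so the whole statement reduces to the matching claim.

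First I would set up the counting and the base-$3$ bookkeeping. The generators are $C=\{a^{3^i}b \mid 0\le i\le d\}$, all involutions of the form $a^{j}b$, so every edge of $G$ joins a ``rotation'' vertex $a^{k}$ to a ``reflection'' vertex $a^{k'}b$; indeed $(a^k)^{-1}(a^{k'}b)=a^{k'-k}b\in C$ iff $k'-k\equiv 3^i \pmod n$ for some $i$. Thus the neighbors of $a^k$ are exactly $\{a^{k+3^i}b \mid 0\le i\le d\}$. I would first confirm $|M|=n+1$ by counting: among the exponents $0\le i<n=3^d$, the number with $[i]_3=1$ plus the number with $[i]_3=2$ covers all nonzero residues (every nonzero $i$ has a rightmost nonzero base-$3$ digit equal to $1$ or $2$), giving $n-1$ such $a^i$ or $a^ib$ vertices; adding the two extra vertices $1=a^0$ and $b=a^0b$ yields $n+1$.

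The core step is to show that within $M$ every vertex has exactly one neighbor in $M$, by pairing each vertex with a partner differing by a single generator $a^{3^i}b$ and checking no other edges occur inside $M$. The natural pairing is to match a rotation $a^i$ (with $[i]_3=1$) to the reflection $a^{i+3^t}b$ where $3^t$ is the place-value of that rightmost nonzero digit: adding $3^t$ turns the rightmost nonzero digit from $1$ to $2$ (no carry, since all lower digits are $0$), so the partner has $[i+3^t]_3=2$ and lies in $M$; the two special vertices $1$ and $b$ are matched to each other via the generator $a^{3^0}b=ab$? — more precisely $1$ and $b$ are adjacent since $1^{-1}b=b\in C$ iff $b=a^{3^i}b$, i.e. $3^i\equiv 0$, which fails, so instead I expect $1$ pairs with $ab$ and $b$ with $a^{?}$; I would recompute this carefully. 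The main obstacle, and the step requiring genuine care, is proving that these are the \emph{only} edges inside $M$: I must show that for a rotation vertex $a^i$ with $[i]_3=1$, \emph{none} of its other neighbors $a^{i+3^s}b$ (for $s\ne t$, or for $s=t$ but a different reading) land in the reflection part of $M$, i.e. satisfy $[i+3^s]_3=2$, and symmetrically that reflection vertices have no spurious rotation neighbors. This is a carry-propagation analysis in base $3$: adding $3^s$ for $s<t$ alters digits strictly below the rightmost nonzero digit, changing $[\cdot]_3$ to a $1$ (not $2$) after the carry resolves, while adding $3^s$ for $s>t$ leaves the rightmost nonzero digit untouched so $[\cdot]_3$ stays $1$. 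I would organize this into a short case analysis on whether $s<t$, $s=t$, or $s>t$, handling carries, and treating the wrap-around residue $i+3^s \bmod 3^d$ and the boundary vertices $1,b$ separately. Once both the pairing and the no-extra-edges claims are established, $G[M]$ has maximum degree $1$ on $n+1>n=\alpha(G)$ vertices, forcing $\sigma(G)=1$.
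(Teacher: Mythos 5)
Your overall strategy is exactly the paper's: reduce $\sigma(G)=1$ to exhibiting the induced degree-$1$ subgraph on $n+1 > \alpha(G)=n$ vertices, count $|M|=n+1$, pair each rotation $a^i$ with $[i]_3=1$ to the reflection $a^{i+3^t}b$ where $3^t$ is the place value of the rightmost nonzero digit, and rule out all other edges inside $M$ by the base-$3$ analysis (adding $3^s$ with $s<t$ creates a new rightmost nonzero digit equal to $1$, adding $3^s$ with $s>t$ leaves the rightmost nonzero digit untouched). This is precisely the paper's case analysis for the vertices $a^i$ with $[i]_3=1$ and $a^ib$ with $[i]_3=2$, and that part of your plan is sound.

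The genuine error is at the two special vertices. You assert that $1$ and $b$ are not adjacent because $b=a^{3^i}b$ would force $3^i\equiv 0\pmod{n}$, ``which fails.'' It does not fail: the generating set runs up to $i=d$, and $3^d=n\equiv 0\pmod{n}$, so $c_d=a^{3^d}b=b$ is itself one of the $d+1$ generators. Hence $1$ and $b$ \emph{are} adjacent, and this is exactly how the paper matches them: $1\cdot c_d=b$ and $b\cdot c_d=1$, while for $\ell<d$ one has $1\cdot c_\ell=a^{3^\ell}b\notin M$ (since $[3^\ell]_3=1\neq 2$) and $b\cdot c_\ell=a^{n-3^\ell}\notin M$ (since $[n-3^\ell]_3=2\neq 1$). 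Your fallback guess that $1$ pairs with $ab$ cannot work, because $ab\notin M$. This is not a cosmetic point: the vertices $\{1,b\}$ are precisely what pushes $|M|$ above $\alpha(G)=n$, and the seemingly redundant generator $c_d=b$ is what makes them an edge of the matching rather than two isolated vertices, so the claim that $M$ induces a matching would be false under your reading of $C$. Once this is corrected, your argument coincides with the paper's.
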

\begin{proof} Denote $c_{\ell} = a^{3^{\ell}} b$ for all $0 \leq \ell \leq d$. Take $x \in M$ and let us prove that it has exactly one neighbor in $M$. We separate the proof in four cases.

If $x = a^{i}$ with $[i]_3 = 1$. We take $j$ the largest exponent such that $3^j$ divides $i$ and write $i = \sum_{j<m<d} \beta_m 3^m + 3^j$. We observe that for all $\ell \in \{0,\ldots,d\}$ 
\[ [(i + 3^{\ell}) \mod n]_3 = \left\{ \begin{array}{lll} 1 & \text{ if } \ell \neq j, \\ 2 & \text{ if } \ell = j. \end{array} \right. \]
Hence  $x c_\ell = a^i a^{3^{\ell}} b = a^{i+3^{\ell}} b \in M$ if and only if $\ell = j$. As a consequence, $x$ has exactly one neighbor in $M$.

If $x = a^{i}b$ with $[i]_3 = 2$. We take $j$ the largest exponent such that $3^j$ divides $i$ and write $i = \sum_{j<m<d} \beta_m 3^m + 2 \cdot 3^j$. We observe that for all $\ell \in \{0,\ldots,d\}$ 
\[ [(i - 3^{\ell}) \mod n]_3 = \left\{ \begin{array}{lll} 1 & \text{ if } \ell = j, \\ 2 & \text{ if } \ell = j. \end{array} \right. \]
Hence  $x c_\ell = a^i b a^{3^{\ell}} b = a^{i-3^{\ell}} \in M$ if and only if $\ell = j$. As a consequence, $x$ has exactly one neighbor in $M$.

If $x = 1$, it is clear that $x c_\ell = c_\ell \in M$ if and only if $\ell = d$. 

If $x = b$, it is clear that $b c_\ell = a^{n - 3^\ell} \in M$ if and only if $\ell = d$. 

Hence $M$  induces a  matching and it is easy to check that $M$ has $n + 1$ elements. As a consequence $\sigma(G) = 1$. 
 \end{proof}

Exhaustive enumeration by computer shows that there is no smaller bipartite non-cyclic Cayley graph with $\sigma=1$, than the cubic $18$-vertex dihedrant given by Theorem~\ref{thm:mathchindihedral} for $n=9$. This graph has been obtained earlier by~\cite{verret2020counterexamples}.

\section{Star graphs}
The \emph{star graph} is the bipartite graph $SG_n=\Cay(S_n,\{(12),(13),\ldots, (1n)\})$. 
%
%
As the main result of this section, we will see in Theorem \ref{thm:stargraph} that star graphs all have sensitivity equal to 1. In other words, we will show that they have an induced subgraph with more than half of the vertices and maximum degree equal to 1.  

Given $\pi \in S_n$, we denote its support by ${\rm supp}(\pi) = \{i \in \{1,\ldots,n\} \, \vert \, \pi(i) \neq i\}$. A permutation $\pi \in S_n$ is called a \emph{derangement} if it has no fixed points or, in other words, if ${\rm supp}(\pi) = \{1,\ldots,n\}$. 

\begin{lemma}\label{lem:oddderangement}Let $n \in \mathbb{Z}^+$ and denote by $d_n$ the number of derangements of $n$ elements. Then, $d_n$ is odd if and only if $n$ is even.
\end{lemma}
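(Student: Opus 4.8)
The plan is to pin down the parity of $d_n$ via a parity-preserving involution rather than an explicit formula, which keeps the argument self-contained. The key observation I would exploit is that conjugation by the transposition $(1\,2)$, namely $\pi \mapsto (1\,2)\,\pi\,(1\,2)$, is an involution on the set $D$ of derangements of $\{1,\ldots,n\}$: it is clearly of order two, and since conjugation preserves cycle type it sends derangements to derangements. Any involution on a finite set $D$ satisfies $|D| \equiv \#\{\text{fixed points}\} \pmod 2$, because the non-fixed elements are partitioned into $2$-element orbits; so it suffices to count the derangements fixed by this conjugation.

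The second step is to identify those fixed points. A permutation $\pi$ commutes with $(1\,2)$ precisely when $\{\pi(1),\pi(2)\} = \{1,2\}$; if moreover $\pi$ is a derangement, then $\pi(1)\neq 1$ and $\pi(2)\neq 2$ force $\pi(1)=2$ and $\pi(2)=1$. Thus a fixed derangement is exactly the swap $(1\,2)$ together with an arbitrary derangement of the remaining $n-2$ symbols $\{3,\ldots,n\}$. Counting these yields $d_{n-2}$ fixed points, and therefore the congruence
\[ d_n \equiv d_{n-2} \pmod 2. \]

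Finally I would close the argument by induction along the two residue classes, using the base values $d_1 = 0$ and $d_2 = 1$. The congruence above shows $d_n$ has the same parity as $d_{n-2}$, so $d_n \equiv d_2 \equiv 1$ (odd) for every even $n$ and $d_n \equiv d_1 \equiv 0$ (even) for every odd $n$, which is exactly the claim.

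As for difficulties, the statement is elementary and I do not anticipate a serious obstacle; the only point requiring care is the fixed-point computation, in particular verifying that commuting with $(1\,2)$ together with the derangement condition forces the swap on $\{1,2\}$, and that the induced action on $\{3,\ldots,n\}$ is again a genuine derangement (so that the fixed set is in bijection with the derangements of $n-2$ elements). An alternative route would be to reduce the standard recurrence $d_n = n\,d_{n-1} + (-1)^n$ modulo $2$: since $(-1)^n \equiv 1$, this gives $d_n \equiv n\,d_{n-1} + 1 \pmod 2$, and the same two-chain induction goes through. I prefer the involution, since it produces the parity recurrence directly and avoids quoting the derangement recurrence.
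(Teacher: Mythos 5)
Your proof is correct, but it takes a genuinely different route from the paper. The paper quotes the standard recurrence $d_n = (n-1)(d_{n-1}+d_{n-2})$ for $n\ge 3$ and runs an induction from $d_1=0$, $d_2=1$; since $(n-1)(d_{n-1}+d_{n-2})$ is even whenever $n$ is odd, and for $n$ even the inductive hypothesis makes exactly one of $d_{n-1},d_{n-2}$ odd, the parity claim falls out. You instead produce the parity congruence $d_n\equiv d_{n-2}\pmod 2$ directly via the involution $\pi\mapsto(1\,2)\pi(1\,2)$ on derangements, and your fixed-point analysis is sound: commuting with $(1\,2)$ forces $\pi$ to preserve $\{1,2\}$ setwise (hence, for a derangement, to swap $1$ and $2$) and to restrict to a derangement of $\{3,\ldots,n\}$, giving exactly $d_{n-2}$ fixed points; the two-chain induction from $d_1,d_2$ then closes the argument. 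What your approach buys is self-containment -- the paper's recurrence is asserted with ``it is easy to check,'' whereas your involution needs no combinatorial identity as input and isolates the parity information without tracking the actual values; what the paper's approach buys is brevity, since the recurrence is standard and the induction is a two-line computation. Both are complete proofs of the lemma.
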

\begin{proof} It is easy to check that $d_n$ satisfies the recursive formula $d_n = (n-1) (d_{n-1} + d_{n-2})$ for all $n \geq 3$. Since $d_1 = 0$ and $d_2 = 1$, the result follows by induction.
\end{proof}

\begin{lemma}\label{lem:neighbors} If $\pi, \tau \in S_n$ are adjacent in the star graph $SG_n$, then the sets ${\rm supp}(\pi) - \{1\}$ and ${\rm supp}(\tau) - \{1\}$ differ in at most one element. 
\end{lemma}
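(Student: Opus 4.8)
The plan is to reduce the statement to a direct computation of how right-multiplication by a generator changes the support. Since $SG_n = \Cay(S_n,\{(12),\ldots,(1n)\})$, two permutations $\pi,\tau$ are adjacent precisely when $\pi^{-1}\tau = (1k)$ for some $k \in \{2,\ldots,n\}$; as transpositions are involutions the relation is symmetric and can be rewritten as $\tau = \pi\,(1k)$. So first I would fix such a $k$ and aim to describe $\tau$ explicitly in terms of $\pi$.

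Second, I would compute the one-line values of $\tau = \pi\,(1k)$. Writing composition as $\tau(i) = \pi((1k)(i))$, one finds $\tau(1) = \pi(k)$, $\tau(k) = \pi(1)$, and $\tau(i) = \pi(i)$ for every $i \notin \{1,k\}$. In other words, passing from $\pi$ to $\tau$ merely swaps the values sitting in positions $1$ and $k$ and leaves all other positions untouched. This is the structural observation that drives the whole proof.

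Finally, I would compare the two supports index by index, after deleting $1$. For $i \notin \{1,k\}$ we have $\tau(i) = \pi(i)$, so $i$ lies in ${\rm supp}(\pi)$ exactly when it lies in ${\rm supp}(\tau)$; index $1$ is discarded on both sides; and the only remaining index is $k$, for which membership in ${\rm supp}(\tau)$ is governed by whether $\pi(1) = k$, while membership in ${\rm supp}(\pi)$ is governed by whether $\pi(k) = k$. These two conditions may disagree, so the symmetric difference of ${\rm supp}(\pi) - \{1\}$ and ${\rm supp}(\tau) - \{1\}$ is contained in $\{k\}$, which gives the claim. I do not expect any real obstacle here: the only points requiring care are bookkeeping the direction of the action (right multiplication) and verifying that it is precisely index $k$, and not index $1$, that can create a discrepancy once $1$ has been removed.
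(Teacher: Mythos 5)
Your proof is correct, and it takes a genuinely different and more economical route than the paper. The paper writes $\pi$ as a product of disjoint cycles and runs through a case analysis (whether $1$ and $r$ lie in the support, in the same cycle, in different cycles, at which position of the cycle, etc.), obtaining along the way an exact description of ${\rm supp}(\tau)$ in each case; you instead work in one-line notation and observe that right multiplication by $(1k)$ changes the permutation in only two positions, so that after discarding the index $1$ only the index $k$ can switch its membership in the support. This is shorter and avoids the case distinction entirely, at the cost of not recording the finer information (e.g.\ ${\rm supp}(\tau)={\rm supp}(\pi)\cup\{1,r\}$ in certain cases) that the paper's proof incidentally produces but never uses. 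One caveat: your formula $\tau(i)=\pi((1k)(i))$ uses the ``apply the right factor first'' convention, whereas the paper composes left-to-right (elsewhere it computes $[\pi\cdot(1s)](i)=(1s)(\pi(i))$). Under the paper's convention the two positions whose values change are $\pi^{-1}(1)$ and $\pi^{-1}(k)$ rather than $1$ and $k$; the argument still goes through, since at such a position $j$ both $\pi(j)$ and $\tau(j)$ lie in $\{1,k\}$, so $j$ can change support membership only if $j\in\{1,k\}$, and the symmetric difference is again contained in $\{1,k\}$. (Alternatively, the two conventions yield graphs identified by the support-preserving isomorphism $\sigma\mapsto\sigma^{-1}$, so the statement is insensitive to the choice.) It would be worth making the convention explicit, but there is no gap.
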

\begin{proof}Since $\pi$ and $\tau$ are adjacent in $SG_n$, then $\tau = \pi \cdot (1 r)$ for some $r \in \{2,\ldots,n\}$. We are going to prove that the symmetric difference of ${\rm supp}(\pi)$ and ${\rm supp}(\tau)$ is contained in $\{1,r\}$ and, hence, the result follows.
We write $\pi = c_1  \cdots c_t$ as a product of cycles with disjoint support, we clearly have that ${\rm supp}(\pi) = \cup_{i = 1}^t {\rm supp}(c_i)$. We divide the proof in several cases:

 If $1,r \notin {\rm supp}(\pi)$. Then $\tau = c_1  \cdots c_t \cdot (1 r)$ is a product of cycles with disjoint support, thus ${\rm supp}(\tau) = {\rm supp}(\pi) \cup \{1,r\}$.
 
  If $1 \notin {\rm supp}(\pi),\, r \in {\rm supp}(\pi)$. We may assume that $c_1 = (r  b_2  \cdots  b_k)$, then $\tau = (1 r  b_2 \cdots  b_k) \cdot c_2 \cdots c_t$ and, thus,  ${\rm supp}(\tau) = {\rm supp}(\pi) \cup \{1\}$.

  If $1 \in {\rm supp}(\pi),\, r \notin {\rm supp}(\pi)$. Proceeding as in the previous case we have that ${\rm supp}(\tau) = {\rm supp}(\pi) \cup \{r\}$.

  If $1,r \in {\rm supp}(\pi)$ and both belong to the support of different disjoint cycles, say  $c_1 = (1\ a_2 \ \cdots \ a_k)$, $c_2 = (r\ b_2 \ \cdots \ b_l)$. Then $c_1 \cdot c_2 \cdot (1 r) = (1 a_2  \cdots a_k r  b_2 \cdots b_l)$. Thus, ${\rm supp}(\tau) = {\rm supp}(\pi )$. 
  
  If $1,r \in {\rm supp}(\pi)$ and both are in the support of the same cycle, say $c_1 = (1 a_2 \cdots a_k)$ and $r = a_i$ for some $i \in \{2,\ldots,k\}$. If $k = 2$, then $c_1$ is the permutation $(1 r)$ and  ${\rm supp}(\tau)  = {\rm supp}(\pi) - \{1,r\}$. If $k > 2$ and $r = b_2$, then $c_1 \cdot (1 r) = (r a_3  \cdots a_k)$ and ${\rm supp}(\tau) = {\rm supp}(\pi) - \{1\}$. If $k > 2$ and $r = a_k$, then  $c_1 \cdot (1 r) = (1  a_2  \cdots a_{k-1})$ and  ${\rm supp}(\tau) = {\rm supp}(\pi) - \{r\}$. Finally, if $k > 2$ and $r = a_i$ with $2 < i < k$, then  $c_1 \cdot (1 r) = (1 a_2 \cdots a_{i-1}) \cdot (r a_{i+1} \cdots a_k)$ and ${\rm supp}(\tau) = {\rm supp}(\pi).$
\end{proof}

\begin{theorem}\label{thm:stargraph} The star graph $SG_n=\Cay(S_n,\{(12),(13),\ldots, (1n)\})$ has an induced subgraph with more than half of the vertices and maximum degree equal $1$. In other words, $\sigma(SG_n) = 1$.
\end{theorem}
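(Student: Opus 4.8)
The plan is to exhibit an explicit set $M \subseteq S_n$ with $|M| > n!/2$ that induces a subgraph of maximum degree $1$; since $SG_n$ is regular and bipartite one has $\alpha(SG_n) = n!/2$, so producing such an $M$ is exactly what $\sigma(SG_n)=1$ demands. Throughout I would record a permutation by its \emph{reduced support} $\phi(\pi) := \mathrm{supp}(\pi) \setminus \{1\} \subseteq \{2,\ldots,n\}$, viewed as a vertex of the hypercube $Q_{n-1}$. Lemma~\ref{lem:neighbors} says precisely that $\phi$ sends adjacent permutations to vertices of $Q_{n-1}$ at Hamming distance at most $1$; hence every edge of $SG_n$ is either \emph{horizontal} (both endpoints in one fibre $\phi^{-1}(T)$) or \emph{vertical} (endpoints in fibres $T$ and $T\cup\{r\}$). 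This dichotomy is the backbone: for any candidate $M$ it lets me enumerate the neighbours of a fixed $\pi\in M$ by first deciding how the reduced support may change and then reading off from the case analysis inside the proof of Lemma~\ref{lem:neighbors} which generator realises each move, and in particular when such a move is the unique one available.

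Next I would pin down the fibres. A permutation in $\phi^{-1}(T)$ either fixes $1$, and is then a derangement of $T$, or moves $1$, and is then a derangement of $T\cup\{1\}$; hence $|\phi^{-1}(T)| = d_{|T|} + d_{|T|+1}$. By Lemma~\ref{lem:oddderangement} exactly one of the two consecutive derangement numbers $d_{|T|}, d_{|T|+1}$ is odd, so \textbf{every fibre has odd cardinality}. This parity fact is the combinatorial heart of the counting. Mirroring the explicit approach of Theorem~\ref{thm:mathchindihedral}, I would define $M$ by a support/cycle-structure condition chosen so that (a) within each fibre $M$ meets $SG_n[\phi^{-1}(T)]$ in an induced matching, and (b) no vertical edge has both endpoints in $M$. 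Granting these two properties, the neighbour count of each $\pi\in M$ splits along the horizontal/vertical dichotomy of Lemma~\ref{lem:neighbors}, and one checks case by case that $\pi$ retains exactly one neighbour in $M$, so that $M$ induces a matching.

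The size $|M| > n!/2$ would then follow by summing the fibrewise contributions over all $T\subseteq\{2,\ldots,n\}$: in each fibre one keeps an induced matching of the odd-order horizontal graph, and the decisive slack comes from the fact that every fibre has odd cardinality, which is exactly the output of Lemma~\ref{lem:oddderangement}. A small case check (e.g.\ $n\le 3$, where the construction already reproduces the induced matching on $\{1,2,4,5\}$ of the $6$-cycle $SG_3$) would anchor the bound for small $n$, with the general estimate $|M|>n!/2$ holding for all $n\ge 3$.

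The step I expect to be the main obstacle is the \emph{simultaneous} control of horizontal and vertical edges demanded by (a) and (b). Choosing an induced matching inside a single fibre is a local problem, but the defining condition must also guarantee globally that whenever $\pi\in M$ the vertical moves $\pi\mapsto\pi(1r)$ leave $M$ — and, dually, that the horizontal matching never equips a vertex with a second in-fibre partner. Making these choices compatible across all $2^{n-1}$ fibres is where the fine structure of Lemma~\ref{lem:neighbors} (which generator performs which support move, and when it is forced) must be used in full; the parity input of Lemma~\ref{lem:oddderangement} is what keeps the resulting balance feasible and the total comfortably above $n!/2$.
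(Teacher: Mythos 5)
There is a genuine gap: the set $M$ is never actually constructed. Your argument reduces everything to the existence of a ``support/cycle-structure condition'' satisfying your conditions (a) and (b), but you do not exhibit such a condition, and you yourself flag its existence as ``the main obstacle.'' Worse, condition (b) --- that no vertical edge (one whose endpoints have reduced supports differing by one element) lies inside $M$ --- is almost certainly the wrong constraint to impose. The paper's successful construction violates it: there the matching edges of the large induced subgraph join a permutation $\pi$ with $|\,\mathrm{supp}(\pi)\setminus\{1\}| = n-2$ to its unique neighbour $\pi\cdot(1r)$ with $|\,\mathrm{supp}(\pi\cdot(1r))\setminus\{1\}| = n-1$, i.e.\ precisely the vertical edges you forbid. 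Moreover, forbidding all vertical edges while demanding $|M|>n!/2$ forces $M$ to contain, across every pair of adjacent fibres, independent sets of the bipartite graph of vertical edges on more than half of each side on average; you give no reason this is feasible. Finally, your counting step is not justified even granting (a) and (b): an induced matching in a fibre of odd cardinality need not cover more than half of that fibre, so ``summing the fibrewise contributions'' does not yield $|M|>n!/2$ from the parity of $d_{|T|}+d_{|T|+1}$ alone.

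For comparison, the paper does not work fibre by fibre over all $2^{n-1}$ reduced supports. It collapses $SG_n$ much more coarsely via a homomorphism $f$ onto a six-vertex graph (the ``domino''), determined only by whether $|\,\mathrm{supp}(\pi)\setminus\{1\}|$ equals $n-1$, equals $n-2$, or is smaller, crossed with the sign of $\pi$; Lemma~\ref{lem:neighbors} is used exactly once, to verify that $f$ is a homomorphism. The preimages of the two ``halves'' of the domino are then each shown directly (by a short case analysis on $f(\pi)$, not by controlling horizontal versus vertical edges separately) to induce subgraphs of maximum degree $1$, and Lemma~\ref{lem:oddderangement} enters only at the very end, to show $|f^{-1}(u_1)|+|f^{-1}(v_1)| = d_n + d_{n-1}$ is odd and hence the two halves have different sizes, so one exceeds $n!/2$. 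Your parity observation about the fibres $|\phi^{-1}(T)| = d_{|T|}+d_{|T|+1}$ is a nice refinement of the same idea, but as it stands it is an ingredient without a construction to plug it into.
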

\begin{proof}
Let $H$ be the \emph{domino}, that is, the graph with vertices $\{u_1,u_2,u_3,v_1,v_2,v_3\}$ and edges $\{u_1 u_2, u_2 u_3, v_1v_2, v_2v_3, u_1v_1, u_2v_2, u_3v_3\}$. 
Consider the map $f: V(SG_n) \longrightarrow V(H)$ defined as
\begin{center}
$f(\pi) = \left\{ \begin{array}{lll} u_1, &$ if $|\,{\rm supp}(\pi) - \{1\}| = n-1$ and $\pi \in A_n, \\
v_1, &$  if $|\,{\rm supp}(\pi) - \{1\}| = n-1$ and $\pi \notin A_n, \\
  u_2, & $ if $|\,{\rm supp}(\pi) - \{1\}| = n-2$ and $\pi \notin A_n, \\
    v_2, & $ if $|\,{\rm supp}(\pi) - \{1\}| = n-2$ and $\pi \in A_n, \\
     u_3, & $ if $|\,{\rm supp}(\pi) - \{1\}| < n-2$ and $\pi \in A_n, \\
      v_3, & $ if $|\,{\rm supp}(\pi) - \{1\}| < n-2$ and $\pi \notin A_n.
  \end{array} \right.$\end{center}
  
  \begin{figure}[ht]
\includegraphics[width=.9\textwidth]{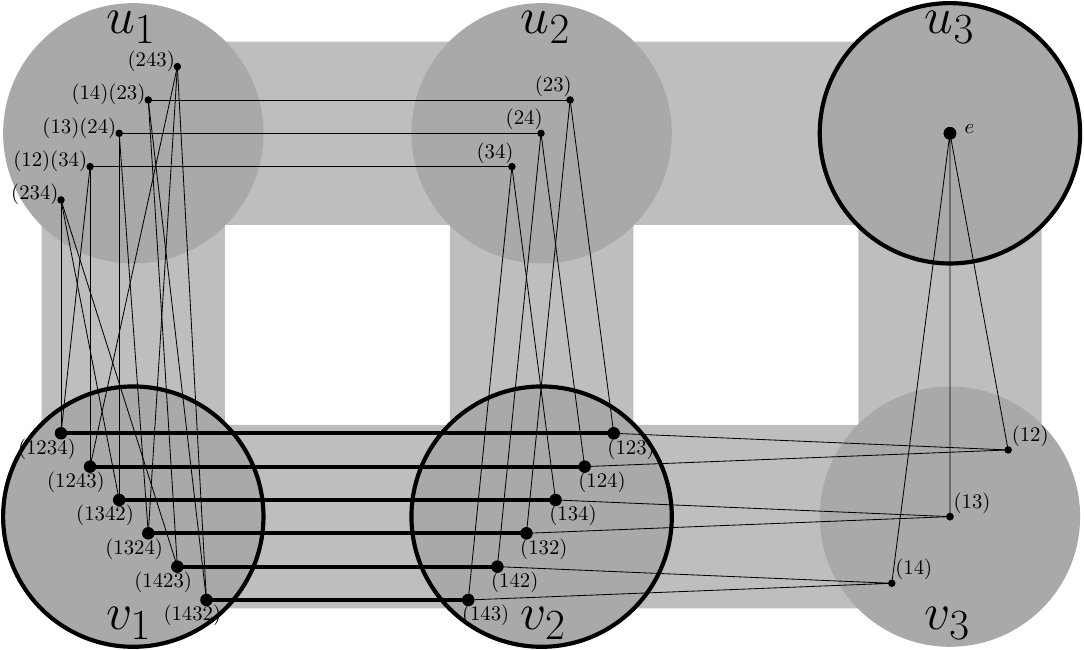}
\caption{Homomorphism from the Nauru graph $SG_4$ onto the domino.} \label{fig:homgraphstar}
\end{figure}

	  Let us check that $f$ is a graph homomorphism (see Figure \ref{fig:homgraphstar} for an example when $n = 4$). We observe that $f(A_n) = \{u_1,v_2,u_3\}$, $f(S_n - A_n) = \{v_1,u_2,v_3\}$. Since the domino is the complete bipartite graph $K_{3,3}$ minus the edges $u_1v_3$, $v_1u_3$, in order to prove that $f$ is a homomorphism we just have to check that if $f(\pi) = u_1$ and $f(\tau) = v_3$ (respectively, $f(\pi) = v_1$ and $f(\tau) = u_3$), then $\pi$ and $\tau$ are not neighbors in $SG_n$; this follows from Lemma \ref{lem:neighbors}. 
  
 Now we are going to prove that the induced subgraphs $K$ and $K'$ with vertices $f^{-1}(\{u_1,u_2,v_3\})$ and  $f^{-1}(\{v_1,v_2,u_3\})$, respectively, have both maximum degree equal to $1$.  Let $\pi \in V(K)$, we separate the proof in three cases:

\emph{Case $f(\pi) = v_3$.} Then $\pi$ has no neighbors in $K$ (since $f$ is a homomorphism). 

\emph{Case $f(\pi) = u_2$.} Then, $\pi \notin A_n$ and $|\,{\rm supp}(\pi) - \{1\}| = n-2$. Let $r$ be the only element in $\{2,\ldots,n\} - {\rm supp}(\pi)$. If $s \in \{2,\ldots,n\} - \{r\}$,  then $r$ is a fixed point for $\pi \cdot (1 s) \in A_n$ and then, $\pi \cdot (1 s) \notin V(K)$ because $f(\pi \cdot (1 s)) \in \{v_2,u_3\}$. As a consequence, the only neighbor of $\pi$ that might belong to $V(K)$ is $\pi \cdot (1 r)$ and the degree of $\pi$ in $K$ is at most $1$.
  
\emph{Case $f(\pi) = u_1$.} Then $\pi \in A_n$ and $|\,{\rm supp}(\pi) - \{1\}| = n-1$. We separate two cases:
\begin{itemize} \item If $1 \notin {\rm supp}(\pi)$. Then $\pi \cdot (1 s) \notin A_n$ is a derangement for all $s \in \{2,\ldots,n\}$. Therefore $\pi \cdot (1  s) \notin K$ because $f(\pi \cdot (1  s)) = v_1$. Thus, $\pi$ is an isolated vertex in $K$.
\item If $1 \in {\rm supp}(\pi)$. Let $r = \pi^{-1}(1) \in \{2,\ldots,n\}$. If $s \in \{2,\ldots,n\} - \{r\}$, then $\pi \cdot (1 s) \notin A_n$ and we claim that $\{2,\ldots,n\} \subseteq {\rm supp}(\pi \cdot (1 s))$. 
To prove the claim we take $i \in \{2,\ldots,n\}$ and we aim at proving that $[\pi \cdot (1 s)](i) \neq i$. We know that $\pi(i) \neq i$; we separate three cases:
 \begin{itemize}
  \item if $\pi(i) \notin \{1,s\}$, then $[\pi \cdot (1 s)](i) = \pi(i) \neq i$,
 \item  if $\pi(i) = 1$, then $i = r$ and $[\pi \cdot (1 s)](i) = s \neq r = i$; and
 \item if $\pi(i) = s$, then $[\pi \cdot (1 s)](i) = 1 \neq i$.
 \end{itemize}
Thus, we conclude that $f(\pi \cdot (1 s)) = v_1$ and $\pi \cdot (1 s) \notin K$ for all $s \neq r$. As a consequence, the only neighbor of $\pi$ that might belong to $V(K)$ is $\pi \cdot (1 r)$ and the degree of $\pi$ in $K$ is at most $1$.
  \end{itemize}
  
  A similar argument works for $K'$. To get the result we now prove that $K$ and $K'$ do not have the same number of elements and, as a consequence, one has more than half of the vertices of $SG_n$ (see Figure \ref{fig:homgraphstar} for the case $n = 4$, where $K'$ has 13 vertices).  Since $f^{-1}(\{u_1,v_2,u_3\}) = A_n$ and $f^{-1}(\{v_1,u_2,v_3\}) = S_n - A_n$ and both sets have the same cardinality, we just need to verify that $f^{-1}(u_1)$ and $f^{-1}(v_1)$ do not have the same number of elements. It suffices to observe that the elements of $f^{-1}(\{u_1,v_1\})$ are in bijection with the set of derangements of either $n$ or $n-1$ elements and, thus,  $|f^{-1}(u_1)| + |f^{-1}(v_1)| = d_n + d_{n-1}$ which, by Lemma \ref{lem:oddderangement}, is an odd number. This completes the proof.  
\end{proof}


One can be more precise in the proof of Theorem \ref{thm:stargraph} and determine that $K$ has exactly $\frac{n!}{2} + (-1)^{n+1}$ vertices and $K'$ has  $\frac{n!}{2} + (-1)^{n}$ vertices. Indeed, following the notation of the proof, we have that $|f^{-1}(u_1)|$ equals the number of even (belonging to $A_n$) derangements of $n$ elements plus the  number of even derangements of $n-1$ elements, then by~\cite[sequence A003221]{EOIS}  we have that 
\[ |f^{-1}(u_1)| = \frac{d_n - (-1)^n(n-1)}{2} + \frac{d_{n-1} - (-1)^{n-1}(n-2)}{2} = \frac{d_n + d_{n-1} + (-1)^{n+1}}{2}\]
and $|f^{-1}(v_1)| = \frac{d_n + d_{n-1} - (-1)^{n+1}}{2} =  |f^{-1}(u_1)| - (-1)^{n+1}$ and we get that 
\[ |V(K)| =  |A_n| + |f^{-1}(u_1)| - |f^{-1}(v_1)| = \frac{n!}{2} + (-1)^{n+1}.\]
Thus, we conclude that the graph with more that half of the vertices of $SG_n$ is $K$ for $n$ odd, and $K'$ for $n$ even.

\section{Tight groups}\label{sec:tight}
It is easy to see that an induced subgraph of maximum degree $1$ in a $d$-regular $n$-vertex graph has at most $\frac{d}{2d-1}n$ vertices. We say that a graph is \emph{tight} if it attains equality. Lehner and Verret ask if there are tight Cayley graphs of groups, see~\cite[Remark 2]{verret2020counterexamples}. Here we give some examples and an infinite family.

First of all one has that $\Cay(D_{3m},\{b,ab\})\cong C_{6m},$ the cycle graph on $6m$ vertices. This graph has an induced matching on $\frac{2}{3}$ of the vertices, hence it is tight of degree $2$.

An exhaustive computer search yields that on up to $60$ vertices there are exactly three tight cubic Cayley graphs. Two of them on $50$ and $60$ vertices, respectively, are depicted in Figure~\ref{fig:A5}. The other one is another Cayley graph of $A_5$ and is the first member of the infinite family shown in Theorem~\ref{thm:alternatingtight}.

  \begin{figure}[ht]
\includegraphics[width=\textwidth]{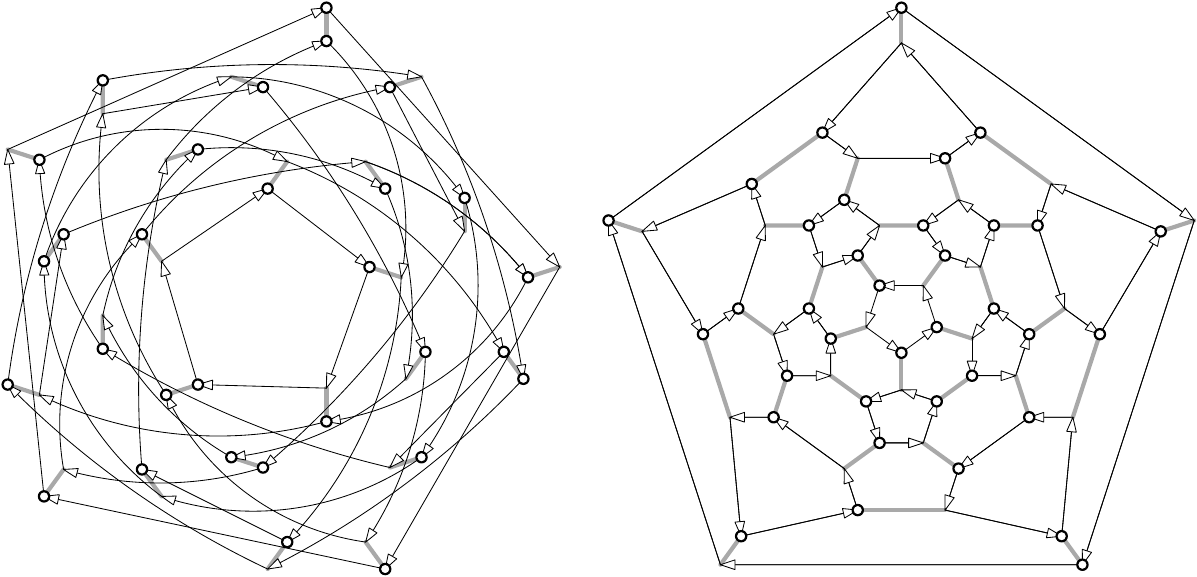}
\caption{The smallest cubic tight Cayley graphs $\Cay(\mathbb{Z}_5\times D_5,\{(1,a),(0,b)\})$ and $\Cay(A_5,\{(12345),(12)(34))\})$, which is an orientation of the skeleton of the truncated icosahedron. The white vertices induce matchings on $\frac{3}{5}$ of the vertices.} \label{fig:A5}
\end{figure}
%
%

\begin{theorem}\label{thm:alternatingtight}
For $m \in \mathbb{Z}^+$, let $\Gamma = S_{2m+1}$ if $m$ is odd and $\Gamma = A_{2m+1}$ if $m$ is even. Further, let $c_k$ be the order $2$ 
permutation of $\{1,\ldots,2m+1\}$ defined by:
\[ c_k(i) = \left\{ \begin{array}{llll} i + m & \text{if} & i < k-m, \\  i + m + 1 & \text{if} & k-m \leq i \leq m, \\ 
i - m & \text{if} & m < i < k \\ i & \text{if} & i = k \\ i - m - 1 & \text{if} & k < i \leq 2m+1.  \end{array} \right.  \]
Then, the set $M = \{\pi \in \Gamma \, \vert \, \pi(1) \geq m+1\}$ has $\frac{m+1}{2m+1}|\Gamma|$ elements and induces a matching in $G = \Cay(\Gamma, C)$ with $C = \{ c_k \, \vert \, m+1 \leq k \leq 2m+1\}$.
\end{theorem}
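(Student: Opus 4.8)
The plan is to verify two separate claims: a counting statement about $|M|$ and a structural statement that $M$ induces a matching in $G$. These are essentially independent, so I would treat them in turn.

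\textbf{Counting $|M|$.} First I would observe that $M = \{\pi \in \Gamma \mid \pi(1) \geq m+1\}$ is determined by a single condition on the image of $1$. If $\Gamma = S_{2m+1}$, then for each target value $v \in \{m+1,\ldots,2m+1\}$ (there are $m+1$ such values) the number of permutations $\pi$ with $\pi(1)=v$ is exactly $(2m)!$, independent of $v$; hence $|M| = (m+1)(2m)! = \frac{m+1}{2m+1}(2m+1)! = \frac{m+1}{2m+1}|\Gamma|$. If $\Gamma = A_{2m+1}$, I would use the same homogeneity: the map $\pi \mapsto \pi\cdot(v\,v')$ for two admissible values $v,v'$ gives a bijection between the even permutations sending $1\mapsto v$ and the odd ones sending $1\mapsto v'$, but more cleanly, since left-multiplication and the parity argument show that among all $\pi$ with $\pi(1)=v$ exactly half lie in $A_{2m+1}$ for each fixed $v$ (as $2m+1\geq 3$ there is an odd permutation fixing $1$, giving the needed involution on the fiber), so $|M\cap A_{2m+1}| = (m+1)\cdot\frac{(2m)!}{2} = \frac{m+1}{2m+1}|A_{2m+1}|$. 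The only care needed is to confirm each $c_k$ is genuinely even when $m$ is even (so that $C\subseteq A_{2m+1}$ and $G$ is well-defined), which I would check by computing the cycle type / sign of $c_k$ from its explicit definition.

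\textbf{The matching property.} This is where the real work lies. Fix $\pi\in M$; I must show $\pi$ has exactly one neighbor in $M$. A neighbor of $\pi$ in $G$ is of the form $\pi c_k$ with $m+1\le k\le 2m+1$, and membership in $M$ is controlled by the value $(\pi c_k)(1) = \pi(c_k(1))$. So the entire analysis reduces to understanding, as $k$ ranges over $\{m+1,\ldots,2m+1\}$, how the single entry $c_k(1)$ behaves. From the defining formula, since $1 < k-m$ fails precisely when $k=m+1$ and otherwise $1<k-m$ can hold, I would compute $c_k(1)$ explicitly in each regime: for the generic $k$ one gets $c_k(1)=1+m$, while boundary values of $k$ shift this. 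The key structural fact I expect to emerge is that the map $k\mapsto c_k(1)$ is a bijection from $\{m+1,\ldots,2m+1\}$ onto $\{1,\ldots,m+1\}$ (or a similar length-$(m+1)$ target set). Granting that, exactly one value of $c_k(1)$ will land on the preimage under $\pi$ that keeps $\pi(c_k(1))\ge m+1$, giving exactly one neighbor in $M$; I would pin down which $k$ this is in terms of $\pi^{-1}$.

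\textbf{Main obstacle.} The hardest and most error-prone step is the explicit bookkeeping of $c_k(1)$ and, symmetrically, understanding when $\pi c_k \in M$, because the piecewise definition of $c_k$ has five branches and the condition $i=k$ creating a fixed point interacts delicately with the ranges. I anticipate the cleanest route is to prove that $\{c_k(1) : m+1\le k\le 2m+1\}$ hits each residue/value in a target set exactly once, reducing "exactly one neighbor" to "exactly one $k$ with $\pi(c_k(1))\ge m+1$." A subtlety to watch is the self-adjacency or double-counting issue: I must ensure that if $\pi c_k\in M$ then $\pi c_k$ in turn has $\pi$ as its unique $M$-neighbor, i.e.\ the relation is symmetric and genuinely pairs vertices; this follows from $c_k^2=1$ once the uniqueness of $k$ is established, so I would close the argument by invoking that each $c_k$ is an involution so the edge $\{\pi,\pi c_k\}$ is the matched pair for both endpoints.
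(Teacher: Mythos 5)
Your counting argument for $|M|$ is fine, and your closing remark about involutions/symmetry is a non-issue (``induces a matching'' is just the statement that every vertex of $M$ has at most one neighbour in $M$). The gap is in the core of the matching argument: you commit to the composition convention $(\pi c_k)(1)=\pi(c_k(1))$, i.e.\ $\pi c_k=\pi\circ c_k$, and then hope that $k\mapsto c_k(1)$ is a bijection onto an $(m+1)$-element set. Neither part survives contact with the definition of $c_k$. Computing directly: for $k=m+1$ the second branch applies to $i=1$ and gives $c_{m+1}(1)=m+2$, while for every $k\in\{m+2,\ldots,2m+1\}$ the first branch applies and gives $c_k(1)=1+m$. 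So for $m\ge 2$ the map $k\mapsto c_k(1)$ is constant on $m$ of the $m+1$ generators, and your reduction would force $\pi c_{m+2},\ldots,\pi c_{2m+1}$ to be all in $M$ or all out of $M$ simultaneously. This is not a repairable bookkeeping slip: under your convention the theorem as stated is actually false. For $m=2$, $\Gamma=A_5$, take $\pi=(1\,3\,4)$; then $\pi\in M$ since $\pi(1)=3$, and $\pi\circ c_4$ and $\pi\circ c_5$ both send $1\mapsto c_k(1)=3\mapsto\pi(3)=4\ge 3$, so $\pi$ would have two neighbours in $M$.

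The intended (and correct) reading is the other one: $[\pi\cdot c_k](1)=c_k(\pi(1))$, so membership of $\pi c_k$ in $M$ is governed by $c_k(i)$ with $i=\pi(1)\in\{m+1,\ldots,2m+1\}$, not by $c_k(1)$. With that, the proof is two lines from the definition: $c_k(k)=k\ge m+1$, while for $i\ne k$ with $m+1\le i\le 2m+1$ only the branches $c_k(i)=i-m$ or $i-m-1$ apply, both of which are $\le m$. Hence $\pi c_k\in M$ if and only if $k=\pi(1)$, giving exactly one neighbour. (The paper additionally partitions $M$ by the value $\pi(1)$ to get the count, and checks ${\rm sgn}(c_k)=(-1)^m$ so that $C\subseteq A_{2m+1}$ when $m$ is even --- the latter point you did flag correctly.) So your proposal identifies the right general reduction (``track where $1$ goes'') but the concrete mechanism you propose fails, and the statement you would need ($k\mapsto c_k(1)$ bijective) is false.
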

\begin{proof}We observe that the signature of $c_k$ is $(-1)^m$ and then $c_k \in A_{2m+1}$ if and only if $m$ is even. Now, we consider the partition  $M =\sqcup_{i = m+1}^{2m+1} M_i$, where $M_i = \{\pi \in M \, \vert \, \pi(1) = i\}$. It is clear that $|M_i| = |\Gamma|/(2m+1)$ for all $i$, and then $M$ has $\frac{m+1}{2m+1}|\Gamma|$ elements. 

Take $i \in \{m+1,\ldots,2m+1\}$ and consider $\pi \in M_i$. We claim that $\pi \cdot c_k \in M$ if and only if $k = i$ and, as a consequence, $M$ induces a matching in $G$. Indeed, $\pi \cdot c_i \in M_i \subseteq M$ because $[\pi \cdot c_i] (1) = c_i(\pi(1)) = c_i(i) = i$ and, for all $k \neq i$, then $\pi \cdot c_k \notin M$ because $[\pi \cdot c_k](1) = c_k(\pi(1)) = c_k(i) \leq m$
\end{proof}

 We wonder if the set $C$ described in this result is a minimal set of generators of $\Gamma$ in every case. Otherwise, the subgroup of $\Gamma$ spanned by $C$ would provide a smaller tight group. 

It is also worth pointing out that the same result (and the same argument of the proof) holds for any set $C = \{ c_k \, \vert \, m+1 \leq k \leq 2m+1\} \subseteq \Gamma$ satisfying that $c_k$ is an order $2$ permutation with $c_k(k) = k$ and $c_k(i) \leq m$ for all $k \neq i \geq m+1$.

%
%
%
We remark that while the above construction gives a tight Cayley graph for every degree, the obtained graphs are pretty large. E.g., for degree $4$, we obtain a Cayley graph of $S_7$. However, we know of at least one smaller such graph, namely $\Cay(A_7,\{(1234567),(123)(45)(67)\})$. It has degree 4 and 2520 vertices and an induced matching of 1440 vertices, i.e., it is tight.  We wonder what size the smallest $4$-regular tight Cayley graph is. By computational means we checked that the answer is at least $84$.

Note further, that the above graphs are the only non-bipartite graphs that have appeared so far. However, we can also construct bipartite ones. For this we recall a couple of definitions and prove a lemma that has been used implicitly in~\cite{potechin2020conjecture,verret2020counterexamples}. A \emph{covering map} from a graph $\hat{G}$ to a graph $G$ is a surjective graph homomorphism $\varphi:\hat{G}\to G$ such that for every vertex $v\in \hat{G}$,  $\varphi$ induces a  one-to-one correspondence between edges incident to $v$ and edges incident to $\varphi(v)$. If there is a covering map from $\hat{G}$ to $G$, we say that $\hat{G}$ is a \emph{covering} of $G$. Finally, for a graph $G$ and every $0 < \beta \leq 1$, we denote by $\Delta_{\beta}(G)$ the minimum value $\Delta(G[H])$ among all the  $H \subset V(G)$ with $|H| \geq \beta |V(G)|$. 

\begin{lemma}\label{lem:covering}
 Let $\hat{G}$ be a {covering} of $G$ and $\beta \in (0,1]$. Then $ \Delta_{\beta}(\hat{G}) \leq \Delta_{\beta}(G)$.
\end{lemma}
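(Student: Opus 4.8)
The plan is to exploit the defining property of a covering map directly: a covering map $\varphi:\hat{G}\to G$ preserves the local edge-structure at every vertex, so degrees are preserved both upward and downward along $\varphi$. Concretely, the key observation I would isolate first is that for any vertex subset $K\subseteq V(G)$ and its full preimage $\hat{K}=\varphi^{-1}(K)\subseteq V(\hat{G})$, the induced subgraph $\hat{G}[\hat{K}]$ has maximum degree at most that of $G[K]$. This is where the covering condition does the work: fix $\hat{v}\in\hat{K}$ with $\varphi(\hat{v})=v$. Every edge of $\hat{G}[\hat{K}]$ incident to $\hat{v}$ maps to an edge of $G$ incident to $v$ with the other endpoint in $K$, i.e.\ to an edge of $G[K]$ incident to $v$. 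Since $\varphi$ restricts to a bijection between edges incident to $\hat{v}$ and edges incident to $v$, distinct edges at $\hat{v}$ map to distinct edges at $v$, so the number of $\hat{G}[\hat{K}]$-neighbors of $\hat{v}$ is at most the number of $G[K]$-neighbors of $v$. Taking the maximum over all $\hat{v}\in\hat{K}$ gives $\Delta(\hat{G}[\hat{K}])\leq\Delta(G[K])$.

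Second, I would control the size of the preimage. Because $\varphi$ is a covering map, it is in particular a $c$-to-one map for a fixed sheet number $c=|V(\hat{G})|/|V(G)|$ (this follows from the local bijection on edges together with surjectivity and connectivity; in our setting one can also simply invoke that coverings of graphs have constant fiber size). Hence $|\hat{K}|=c\,|K|$ and $|V(\hat{G})|=c\,|V(G)|$, so the density is preserved: $|K|\geq\beta|V(G)|$ if and only if $|\hat{K}|\geq\beta|V(\hat{G})|$.

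With these two facts the argument is immediate. Let $K\subseteq V(G)$ be a set with $|K|\geq\beta|V(G)|$ achieving the minimum, so that $\Delta(G[K])=\Delta_{\beta}(G)$. Set $\hat{K}=\varphi^{-1}(K)$. By the size-preservation step, $|\hat{K}|=c|K|\geq\beta\,c\,|V(G)|=\beta|V(\hat{G})|$, so $\hat{K}$ is an admissible competitor in the definition of $\Delta_{\beta}(\hat{G})$. By the degree step, $\Delta(\hat{G}[\hat{K}])\leq\Delta(G[K])=\Delta_{\beta}(G)$. Since $\Delta_{\beta}(\hat{G})$ is the minimum of $\Delta(\hat{G}[H])$ over all admissible $H$, we conclude $\Delta_{\beta}(\hat{G})\leq\Delta(\hat{G}[\hat{K}])\leq\Delta_{\beta}(G)$, as desired.

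The only genuinely delicate point, and the one I would state carefully, is the degree step: it is tempting to think a covering could \emph{increase} degrees in an induced subgraph, but the injectivity half of the local edge-bijection rules this out, provided one takes the \emph{full} preimage $\hat{K}=\varphi^{-1}(K)$ rather than an arbitrary lift. If one lifted only part of a fiber the inequality could fail, so the choice $\hat{K}=\varphi^{-1}(K)$ is essential and should be emphasized. Everything else is bookkeeping about fiber sizes, which is standard for graph coverings.
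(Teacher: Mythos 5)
Your proof is correct and follows essentially the same route as the paper's: you use the constant fiber size of the covering to see that full preimages preserve density, and the injectivity half of the local edge-bijection to see that degrees cannot increase when pulling back an induced subgraph. Your write-up is in fact slightly more explicit than the paper's about which set to lift (the full preimage $\varphi^{-1}(K)$ of an optimal $K\subseteq V(G)$), but the ideas coincide.
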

\begin{proof} Let $\varphi:\hat{G}\to G$ be a covering map and let us assume without loss of generality that $G$ is connected. It is easy to see that all fibers of $\varphi$ have the same size $k$ and, thus $|V(\hat{G})| = k \cdot |V(G)|$. Now, take $K \subset V(G)$ such that $|K| \geq \beta |V(G)|$ and $\Delta_{\beta}(G) = \Delta(G[K])$. Considering $\hat{K} := \varphi^{-1}(K)$ one has that $|\hat{K}| = k \cdot|K|$ and then $\frac{|\hat{K}|}{|V(\hat{G})|} = \frac{|K|}{|V(G)|} \geq \beta$. Since $\varphi$ is a homomorphism and two neighbors of a given vertex cannot be mapped by $\varphi$ to the same vertex, then the maximum degree induced by $\hat{K}$ is at most the maximum degree induced by $K$. This yields the claim.
\end{proof}

The \emph{cross product} $G\times H=(V\times V',E'')$ of two graph $G=(V,E)$ and $H=(V',E')$ has an edge $\{(u,u'),(v,v')\}\in E''$ if and only if $\{u,v\}\in E$ and $\{u',v'\}\in E'$. The \emph{Kronecker double cover} of a graph $G$ is the bipartite graph $G \times K_2$. It is easy to see that $G \times K_2$ is a covering of $G$.
\begin{remark}\label{rm:cayleydouble} Given a Cayley graph $\Cay(\Gamma,C)$ its Kronecker double cover $\Cay(\Gamma,C)\times K_2$ is the bipartite Cayley graph $\Cay(\Gamma\times\mathbb{Z}_2,C\times\{1\})$. 
\end{remark}

This remark together with Lemma~\ref{lem:covering} and Theorem~\ref{thm:alternatingtight} yield:
\begin{corollary}\label{cor:bipdouble}
 There are infinite families of unbounded degree bipartite tight Cayley graphs.
\end{corollary}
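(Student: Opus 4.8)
The plan is to assemble Corollary~\ref{cor:bipdouble} from the three ingredients just established, treating it as a direct consequence rather than a fresh argument. First I would fix a member $G=\Cay(\Gamma,C)$ of the tight family from Theorem~\ref{thm:alternatingtight}, so that $\Gamma=S_{2m+1}$ or $A_{2m+1}$ and $C=\{c_k \mid m+1\leq k\leq 2m+1\}$ has $m+1$ elements; thus $G$ is a $(m+1)$-regular tight graph. The goal is to produce a \emph{bipartite} tight graph of the same unbounded degree, and the natural candidate is the Kronecker double cover $\hat G=G\times K_2$.

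The key steps are as follows. By Remark~\ref{rm:cayleydouble}, $\hat G=\Cay(\Gamma\times\Z_2,\,C\times\{1\})$, so $\hat G$ is itself a Cayley graph, of the group $\Gamma\times\Z_2$, with the same degree $|C|=m+1$; this is the point that keeps us inside the class of Cayley graphs. Since $\hat G$ is the Kronecker double cover of $G$, it is bipartite. It remains only to check that $\hat G$ is \emph{tight}, i.e.\ that it contains an induced matching on a $\frac{d}{2d-1}$-fraction of its vertices, where $d=m+1$. For this I would invoke Lemma~\ref{lem:covering}: since $\hat G$ is a covering of $G$, we have $\Delta_\beta(\hat G)\leq\Delta_\beta(G)$ for every $\beta\in(0,1]$. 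Taking $\beta=\frac{d}{2d-1}$, Theorem~\ref{thm:alternatingtight} gives an induced matching on a $\beta$-fraction of $V(G)$, so $\Delta_\beta(G)\leq 1$, whence $\Delta_\beta(\hat G)\leq 1$ as well. As $\hat G$ is $d$-regular, the general bound stated at the start of Section~\ref{sec:tight} shows that no induced subgraph of maximum degree $1$ can exceed a $\frac{d}{2d-1}$-fraction, so in fact $\hat G$ attains equality and is tight.

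Finally, letting $m$ range over the positive integers produces bipartite tight Cayley graphs of degree $m+1$, hence of unbounded degree, which is exactly the assertion. I would present the whole thing in a few lines, explicitly citing Remark~\ref{rm:cayleydouble} for the Cayley and bipartiteness structure and the chain Lemma~\ref{lem:covering}$\,+\,$Theorem~\ref{thm:alternatingtight} for tightness.

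The only real subtlety—and the step I would scrutinize—is the claim that tightness transfers through the cover. Lemma~\ref{lem:covering} only yields the inequality $\Delta_\beta(\hat G)\leq\Delta_\beta(G)\leq 1$, giving an induced subgraph of maximum degree at most $1$ on the right-sized vertex fraction; one must then separately observe that this fraction is already the maximum allowed in any $d$-regular graph, so the inequality is forced to be an equality and the cover is genuinely tight rather than merely sparse. Concretely, the preimage under the covering map $\varphi:\hat G\to G$ of the induced matching $M\subseteq V(G)$ has size $2|M|$ out of $2|V(G)|$ vertices, the same fraction, and $\varphi$ being a covering guarantees its induced maximum degree does not increase; this makes the equality case transparent and completes the argument.
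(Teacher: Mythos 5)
Your proposal is correct and follows exactly the paper's intended route: the paper derives Corollary~\ref{cor:bipdouble} precisely by combining Remark~\ref{rm:cayleydouble}, Lemma~\ref{lem:covering}, and Theorem~\ref{thm:alternatingtight}, which is what you do. Your added remark that the covering inequality must be combined with the $\frac{d}{2d-1}$ upper bound for $d$-regular graphs to force equality is a worthwhile explicit observation that the paper leaves implicit.
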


We have checked with a computer that the smallest cubic bipartite tight Cayley graph comes from the above construction and is a Cayley graph of $\mathbb{Z}_{10}\times D_5$. We do not know which is the smallest $4$-regular bipartite tight Cayley graph.


Note that a source of tight transitive graphs are \emph{odd graphs}, see~\cite{verret2020counterexamples}. In particular, the smallest cubic tight transitive graph is the \emph{Petersen graph} $G(5,2)$ and the smallest cubic bipartite tight transitive graph is its Kronecker cover, namely the \emph{Desargues graph} $G(10,3)$.

%

\section{Bounds and constructions close to the hypercube}
In the present section we give a very elementary generalization of the lower bound of Huang~\cite{huang2019induced} and a more involved generalization of the construction of Chung, F\"uredi, Graham, and Seymour~\cite{CFGS88}. Both will be applied in the following section to Coxeter groups.

For any graph $G$, we denote by \[ \kappa(G) = \max\{n \in \mathbb{Z}^+ \, \vert \, Q_n \text{ is a subgraph  of }G\}, \]  i.e., $\kappa(G)$ is the dimension of the largest hypercube contained in $G$.

\begin{proposition}\label{pr:deltasubgraph}Let $G$ be a bipartite Cayley graph and $H$ a regular subgraph of $G$, then $\sigma(G)\geq\sigma(H)$. In particular,  $\sigma(G) \geq \sqrt{\kappa(G)}$.
\end{proposition}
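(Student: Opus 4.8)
The plan is to prove the two claims separately, since the second follows by combining the first with Huang's theorem. For the first claim, that $\sigma(G) \geq \sigma(H)$ whenever $H$ is a subgraph of the bipartite Cayley graph $G$, I would exploit the vertex-transitivity of $G$ together with the fact that in a regular bipartite graph $\alpha(G) = |V(G)|/2$. The key structural observation is that $G$, being a Cayley graph, is covered by translates of $H$: for each group element $g$, the left-translate $gH$ is an isomorphic copy of $H$ sitting inside $G$, and these translates cover $V(G)$ uniformly (each vertex lies in the same number of translates). I would take a set $K \subseteq V(G)$ with $|K| > \alpha(G) = |V(G)|/2$ that witnesses $\sigma(G)$, i.e.\ with $\Delta(G[K]) = \sigma(G)$, and argue by a counting/averaging argument over the translates $gH$ that some translate $gH$ satisfies $|K \cap gH| > |V(H)|/2 \geq \alpha(H)$. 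Since $G[K \cap gH]$ is an induced subgraph of $G[K]$, its maximum degree is at most $\Delta(G[K]) = \sigma(G)$; but pulling back through the isomorphism $gH \cong H$ exhibits a set of more than $\alpha(H)$ vertices of $H$ inducing max degree at most $\sigma(G)$, so $\sigma(H) \leq \sigma(G)$.

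The averaging step is where I expect the fractions to need care. Writing the fraction of vertices in $K$ as $|K|/|V(G)| > 1/2$ and using that the translates $gH$ cover each vertex equally often, the average of $|K \cap gH|/|V(H)|$ over all $g \in \Gamma$ equals $|K|/|V(G)| > 1/2$, so at least one translate beats the average and has more than half its vertices in $K$. One subtlety is whether $H$ is itself regular and bipartite so that $\alpha(H) = |V(H)|/2$ exactly; the cleanest route is to note that the definition of $\sigma(H)$ only requires comparing against $\alpha(H)$, and since $\alpha(H) \geq |V(H)|/2$ could fail only if $H$ is not balanced, I would phrase the argument so that having \emph{more than} half the vertices of $H$ in the pulled-back set already suffices, invoking $\alpha(H) \le |V(H)|/2$ which holds because $H \subseteq G$ inherits a proper $2$-colouring and hence $H$ is bipartite with $\alpha(H)$ at most half its vertices only if balanced. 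To sidestep this delicacy I would instead apply the clean covering framework: the identity translate can be replaced by the observation that $\sigma$ is monotone under the operation of passing to vertex-transitive supergraphs, which is exactly what the translate-averaging delivers.

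For the second claim I would simply instantiate $H = Q_{\kappa(G)}$, the largest hypercube subgraph of $G$, whose existence and dimension are guaranteed by the definition of $\kappa(G)$ given just before the proposition. Huang's theorem states $\sigma(Q_d) \geq \sqrt{d}$, so applying the first part with $d = \kappa(G)$ gives $\sigma(G) \geq \sigma(Q_{\kappa(G)}) \geq \sqrt{\kappa(G)}$. The only thing to verify here is that $Q_{\kappa(G)}$ is genuinely an (unlabelled) subgraph of $G$ in the sense required by part one, which is immediate from the definition of $\kappa$.

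The main obstacle, as flagged above, is making the translate-averaging argument fully rigorous while correctly handling the independence-number bookkeeping: one must ensure that the threshold ``more than $\alpha(H)$'' is actually met in the chosen translate, rather than merely ``more than half of $|V(H)|$,'' and reconcile these when $H$ need not be balanced bipartite. I would resolve this by working throughout with the ratio $|K|/|V(G)|$ and proving the general covering lemma that $\sigma$ does not increase when passing from a vertex-transitive graph to one of its subgraphs, then reading off both conclusions as corollaries.
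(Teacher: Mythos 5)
Your proposal is correct and follows essentially the same route as the paper: cover $G$ by the translates $x\cdot H$, use an averaging/pigeonhole argument to find a translate with more than half of its vertices in $K$, pull back through the isomorphism, and then instantiate $H=Q_{\kappa(G)}$ together with Huang's theorem for the second claim. The subtlety you flag about whether ``more than $|V(H)|/2$'' implies ``more than $\alpha(H)$'' is real but is glossed over in the paper as well (it is harmless for the intended application, where $H$ is a regular bipartite graph such as $Q_d$, so that $\alpha(H)=|V(H)|/2$).
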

\begin{proof} Since $G$ and $H$ are bipartite and regular, their maximum independent sets contain half the vertices. Now, for every $x \in V(G)$, we consider the set of vertices $x \cdot H = \{x \cdot h \, \vert \, h \in V(H)\}$. The sets $(x \cdot H)_{x \in V(G)}$ cover $G$ and every element in $G$ belongs to exactly $|V(H)|$ of these sets.  If one takes a set $K \subseteq V(G)$ with $|K| > \frac{1}{2} |V(G)|$, then 
\[ \sum_{x \in V(G)} |K \cap (x \cdot H)| = |K|  |V(H)| > \frac{1}{2}  |V(G)| |V(H)|\] and, by the pigeonhole principle,
there exists an $x \in V(G)$ such that $|K \cap (x \cdot H)| > \frac{1}{2}  |V(H)|$. Since the induced graph with vertices $x \cdot H$ is isomorphic to $H$, we conclude that the maximum  degree of the subgraph induced by $K$ is at least $\sigma(H)$. The second statement follows from Huang's result~\cite{huang2019induced}.
\end{proof}

\bigskip
Before we go into constructions let us introduce a coloring variant of the parameter $\sigma$:  For a graph $G=(V,E)$ and a non-negative integer $k$ denote by $$\iota_k(G)=\max\{|A|-|B| \mid V=A\sqcup B\text{ and }\Delta(G[A]),\Delta(G[B])\leq k\}$$ its \emph{k-imbalance}. Hence, for a regular bipartite graph $G$ there is a subset $K$ on $\frac{|V|+\iota_k(G)}{2}$ vertices with $\Delta(G[K])\leq k$. In particular, $\sigma(G)\leq\min\{k\mid \iota_k(G)>0\}$. An easy observation is that if $H$ is a subgraph of $G$ with the same vertex set, then $\iota_k(G)\leq \iota_k(H)$ for every $k\geq 0$. For the next property, define the \emph{Cartesian product} of graphs $G=(V,E)$ and $H=(V',E')$ as $G\square H=(V\times V',E'')$, where $\{(u,u'),(v,v')\}\in E''$ if and only if $u=v$ and $\{u',v'\}\in E'$ or $u'=v'$ and $\{u,v\}\in E$.

\begin{lemma}\label{lem:prodimb}
 For graphs $G,H$ and non-negative integers $k,\ell$, we have $\iota_k(G)\iota_{\ell}(H)\leq \iota_{k+\ell}(G\square H)$.
\end{lemma}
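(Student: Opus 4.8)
The plan is to prove the inequality $\iota_k(G)\iota_\ell(H)\le \iota_{k+\ell}(G\square H)$ by taking optimal bipartitions achieving the imbalances on the factors and combining them into a bipartition of the product. Suppose $V(G)=A\sqcup B$ with $\Delta(G[A]),\Delta(G[B])\le k$ and $|A|-|B|=\iota_k(G)$, and $V(H)=A'\sqcup B'$ with $\Delta(H[A']),\Delta(H[B'])\le \ell$ and $|A'|-|B'|=\iota_\ell(H)$. The natural thing to try is to partition the vertex set $V\times V'$ of $G\square H$ according to the ``parities'' of the two coordinates, i.e. to put a vertex $(u,u')$ into the first class if $(u\in A)\leftrightarrow(u'\in A')$ (both in the $A$-part or both in the $B$-part) and into the second class otherwise. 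So I would set
\[
\mathcal A = (A\times A')\cup(B\times B'),\qquad \mathcal B=(A\times B')\cup(B\times A').
\]

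Next I would check the degree bound. Since the edges of $G\square H$ are exactly those that change a single coordinate, a vertex $(u,u')$ has neighbors of the form $(v,u')$ with $\{u,v\}\in E$ and $(u,v')$ with $\{u',v'\}\in E'$. Within the class $\mathcal A$, fixing the second coordinate $u'\in A'$ forces the first coordinate to stay in $A$; the neighbors of $(u,u')$ inside $\mathcal A$ that differ in the first coordinate are therefore exactly the $G[A]$-neighbors of $u$, of which there are at most $k$, and those differing in the second coordinate are exactly the $H[A']$-neighbors of $u'$, at most $\ell$; the same count holds for $u'\in B'$ using $G[B]$ and $H[B']$. Hence every vertex of $\mathcal A$ has induced degree at most $k+\ell$, and the analogous bookkeeping (pairing $A$ with $B'$ and $B$ with $A'$) gives $\Delta((G\square H)[\mathcal B])\le k+\ell$ as well. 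The one routine point to verify here is that the relevant degree in each coordinate is governed by the \emph{same-part} induced subgraph, which is precisely what the product structure guarantees.

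Finally I would compute the imbalance of this bipartition. One has
\[
|\mathcal A|-|\mathcal B|=\bigl(|A||A'|+|B||B'|\bigr)-\bigl(|A||B'|+|B||A'|\bigr)=(|A|-|B|)(|A'|-|B'|)=\iota_k(G)\,\iota_\ell(H),
\]
so this particular bipartition witnesses $\iota_{k+\ell}(G\square H)\ge \iota_k(G)\iota_\ell(H)$, which is the claim. I do not anticipate a serious obstacle: the construction is forced by the product structure, and the only genuine content is the degree verification, which is a direct consequence of edges in $G\square H$ changing exactly one coordinate. A minor subtlety worth a sentence is the sign, if either imbalance could be negative: by definition $\iota_k\ge 0$ always (one may take $A=V$, $B=\varnothing$ or the reverse, or note that swapping the roles of the two parts flips the sign of the difference), so the product on the left is nonnegative and the inequality is meaningful as stated.
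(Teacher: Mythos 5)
Your proposal is correct and follows essentially the same route as the paper: the same parity-based bipartition $\mathcal A=(A\times A')\cup(B\times B')$, $\mathcal B=(A\times B')\cup(B\times A')$, the same coordinate-wise degree count giving $k+\ell$, and the same factorization $|\mathcal A|-|\mathcal B|=(|A|-|B|)(|A'|-|B'|)$. Your closing remark on the nonnegativity of $\iota_k$ is a small extra observation not in the paper, but the argument is otherwise identical.
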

    \begin{proof}
Let $A\sqcup B$ be a partition of $G$ such that both sets induce subgraphs of maximum degree at most $k$ and $|A|-|B|=\iota_k(G)$. Similarly, let $A'\sqcup B'$ be a partition of $H$ such that both sets induce subgraphs of maximum degree at most $\ell$ and $|A'|-|B'|=\iota_{\ell}(H)$.
 
Define two new sets $A''=A\times A'\cup B\times B'$ and $B''=A\times B'\cup B\times A'$. Clearly, $A''$ and $B''$ partition the vertex set of $G\square H$. Let us analyze without loss of generality the maximum degree induced by $A''$. Let $v= (a,a')$ with $a\in A\subseteq V(G)$ and $a' \in A'\subseteq V(H)$. The degree of $v$ is constituted by its degree in $A''\cap \{a\}\times A'$ and its degree in $A''\cap A\times \{a'\}$. Thus, it equals the sum of the degree of $a$ in $A$ and the degree of $a'$ in $A'$.  The analogous argument holds for $v=(b,b')$ with $b\in B\subseteq V(G)$ and $b' \in B'\subseteq V(H)$. We conclude that both $A''$ and $B''$ induce subgraphs of maximum degree at most $k+\ell$.

Finally, we compute the imbalance $\iota_{k+\ell}(G\square H)\geq |A''|-|B''|=|A||A'|+|B||B'|-|A||B'| - |B||A'|=(|A|-|B|)(|A'|-|B'|)=\iota_k(G)\iota_{\ell}(H)$.
\end{proof}

In \cite{CFGS88}, Chung, F\"uredi, Graham, and Seymour exhibited an induced subgraph of $Q_n$ with $2^{n-1}+1$ vertices and maximum degree $\left\lceil \sqrt{n} \right\rceil$ for all $n \geq 1$. Next we extend this construction to certain lattices.

We introduce some notation for posets and lattices. For a poset $P$, we say that $y$ \emph{covers} $x$ and we write $x \prec y$, if $x<y$ and there is is no $z\in P$ with $x<z<y$. We denote by $G_P=(P,E)$ its \emph{cover graph}, i.e, $\{x,y\}\in E$ whenever $x \prec y$. We say that $P\subseteq Q$ are \emph{cover subposets} if $x\leq_Py\iff x\leq_Qy$ for all $x,y\in P$ and $G_P$ is an induced subgraph of $G_Q$. For $x\in P$ denote by $\uparr x=\{y \in P\mid x\leq y\}$, and for $\mathbf{F} \subseteq P$ denote by $\uparr \mathbf{F} = \cup_{x \in F} \uparr x$.
A \emph{lattice} $\mathcal{L}$ is a partially ordered set, such that for any $x,y\in \mathcal{L}$ there a unique smallest element $x\vee y\geq x,y$ called the \emph{join} of $x$ and $y$ and a unique largest element $x\wedge y\leq x,y$ called the \emph{meet} of $x$ and $y$. The $\emph{Boolean lattice}$ $\mathcal{B}_n$ is the inclusion order of all subsets of the set $[n]=\{1, \ldots, n\}$. Its cover graph is the hypercube $Q_n$.

We from now on consider a lattice $\mathcal{L}$ that is a cover subposet of $\mathcal{B}_n$. Before proceeding to studying sensitivity related results, let us discuss the generality of this class. First, note that $\mathcal{L}$ is not a necessary a sublattice of $\mathcal{B}_n$, i.e., it may have different join and meet operations. However, since $\mathcal{L}$ is a subposet of $\mathcal{B}_n$ we can assume without loss of generality that the minimum and maximum $\hat{0}$, $\hat{1}$ of $\mathcal{L}$ correspond to the empty and the full set in $\mathcal{B}_n$, respectively. See Figure~\ref{fig:lattices} for three examples.

\begin{figure}[ht]
\includegraphics[width=\textwidth]{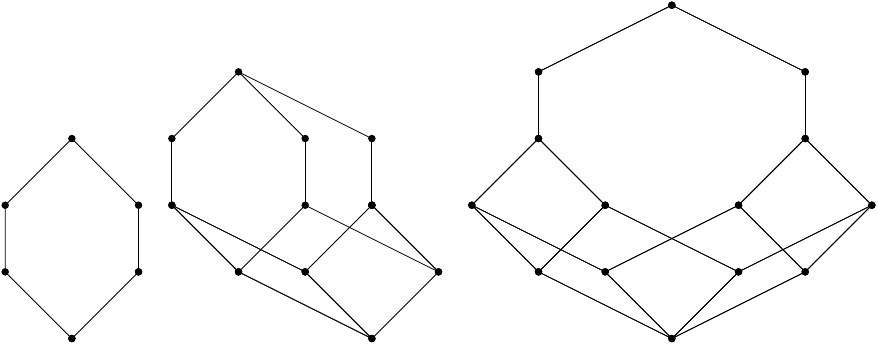}
\caption{Three lattices that are cover subposets of $\mathcal{B}_3$, $\mathcal{B}_4$, and $\mathcal{B}_5$, respectively.} \label{fig:lattices}
\end{figure}

We proceed to define an important subclass of these lattices.
A graph $G$ is a \emph{partial cube} if $G$ is (isomorphic to) an \emph{isometric subgraph} of ${Q}_n$, i.e., $d_G(x,y)=d_{Q_n}(x,y)$ for all $x,y\in G$, where $d$ denotes the distance function. Fixing $z\in G$ and defining $x\leq y$ if $d_G(z,y)=d_G(z,x)+d_G(x,y)$ yields a poset denoted $P(G,z)$.

\begin{lemma}\label{lem:partialcubes}
 If $G$ is a partial cube and $z\in G$ a vertex, then the poset  $P(G,z)$ is isomorphic to a cover subposet of $\mathcal{B}_n$ with cover graph $G$.
\end{lemma}
\begin{proof}
 Let $G$ be (isomorphic to) an isometric subgraph of $Q_n$ and choose the isomorphism such that $z$ is identified with the empty set in $\mathcal{B}_n$. Since $G$ is bipartite, $G$ is the cover graph of $P(G,z)$. Furthermore, since $G$ is isometric it is in particular an induced subgraph of $Q_n$. 
 Let, now $x\leq_{P(G,z)}y$ in $P(G,z)$. This by definition means $d_G(z,y)=d_G(z,x)+d_G(x,y)$ which by isometry condition is equivalent to $d_{Q_n}(z,y)=d_{Q_n}(z,x)+d_{Q_n}(x,y)$. Now, since $z$ corresponds to the empty set for the sets $X,Y$ corresponding to $x,y$ this means $X\subseteq Y$, which is equivalent to $x\leq_{\mathcal{B}_n}y$.
\end{proof}

Lemma~\ref{lem:partialcubes} yields a rich class of lattices that are cover subposets of a Boolean lattice:
\begin{remark}\label{rem:lattice}
If $G$ is a partial cube with a vertex $z\in G$ such that $P(G,z)$ is a lattice $\mathcal{L}$, then $\mathcal{L}$ is a cover subposet of $\mathcal{B}_n$. The dual graph $G$ of a central hyperplane arrangement is a partial cube, see e.g.~\cite{LV80,E06}. If the hyperplane arrangement is simplicial, then $G$ is regular and for any vertex $z\in G$ the poset $P(G,z)$ is a lattice $\mathcal{L}$ and $G=G_{\mathcal{L}}$, see~\cite{BEZ90}.
\end{remark}

The left-most lattice in Figure~\ref{fig:lattices} arises from a central hyperplane as described in Remark~\ref{rem:lattice}. Indeed the so-called \emph{weak (right) order} of a Coxeter group~\cite{B84} is an example, that arises from taking the dual graph of a Coxeter arrangement. See the left of Figure~\ref{fig:B3} for another example.
The lattice in the middle of Figure~\ref{fig:lattices} arises from a partial cube, that is not the dual graph of a hyperplane arrangement. The right-most lattice in Figure~\ref{fig:lattices} arises from an induced subgraph of $Q_5$, that is not a partial cube.

We return to studying sensitivity related notions. In a lattice $\mathcal{L}$ that is a cover subposet of $\mathcal{B}_n$,
we call the vertices \emph{even} and \emph{odd} depending on the cardinalities of the corresponding sets. The set of even and odd vertices of a subset $S\subseteq \mathcal{L}$ is denoted $\even(S)$ and $\odd(S)$, respectively. For $\mathbf{F}\subseteq \mathcal{L}$ define $r(\mathbf{F})=\max\{|F| \mid F\in \mathbf{F}\}$ and $t(\mathbf{F})=\max\{|X| \mid X \subseteq\mathbf{F} {\rm\ and \ } \forall F\in X:F\setminus( \bigcup_{K \in X \atop K \neq F} K) \neq \emptyset\}$. This is, $t(\mathbf{F})$ denotes the size of a largest subset $X$ of $\mathbf{F}$ such that every $F\in X$ contains an element that is in no other set from $X$.
Given $\mathbf{F}$ we define:

$$\mathbf{X}(\mathbf{F})=\even(\uparr\mathbf{F})\cup\odd(\mathcal{L}\setminus\uparr\mathbf{F}).$$ 
Define $G(\mathbf{F}):=G_{\mathcal{L}}[\mathbf{X}(\mathbf{F})]$ and $G'(\mathbf{F}):=G_{\mathcal{L}}[\mathcal{L}\setminus\mathbf{X}(\mathbf{F})]$ as the induced subgraph of $G_{\mathcal{L}}$ on $\mathbf{X}(\mathbf{F})$ and on the complement of $\mathbf{X}(\mathbf{F})$, respectively.

\begin{lemma}\label{lem:bound}
Let $\mathcal{L}$ be lattice that is a cover subposet of $\mathcal{B}_n$, $\mathbf{F}\subseteq \mathcal{L}$, and $k=\max\{r(\mathbf{F}),t(\mathbf{F})\}$. We have: $$\max\{\Delta(G(\mathbf{F})),\Delta(G'(\mathbf{F}))\}\leq k.$$
As a consequence, if $G_{\mathcal{L}}$ is regular, then $$\frac{\iota_k(G_{\mathcal{L}})}{2}\geq\sum_{1\leq i\leq k}|\even(\uparr F_i)|-|\odd(\uparr F_i)|-\sum_{1\leq i<j\leq k}|\even(\uparr(F_i\vee F_j))|-|\odd(\uparr(F_i\vee F_j))|\pm\ldots$$
\end{lemma}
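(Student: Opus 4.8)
The plan is to prove the degree bound by a case analysis on the four vertex types of $G_{\mathcal{L}}$, using that $\mathcal{L}$ is a cover subposet of $\mathcal{B}_n$: every edge of $G_{\mathcal{L}}$ joins two sets differing in a single element, so $G_{\mathcal{L}}$ is bipartite with colour classes $\even(\mathcal{L})$ and $\odd(\mathcal{L})$. First I would bound degrees at vertices $v\in\uparr\mathbf{F}$. Set $\mathbf{F}_v=\{F\in\mathbf{F}\mid F\subseteq v\}\neq\emptyset$. In each of $G(\mathbf{F})$ and $G'(\mathbf{F})$, the vertices of opposite parity to $v$ lying in the same part as $v$ are exactly those \emph{outside} $\uparr\mathbf{F}$; since a neighbour $w=v\cup\{e\}$ obtained by adding an element still lies in $\uparr\mathbf{F}$, only neighbours $w=v\setminus\{e\}$ can contribute to the degree of $v$. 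Such a $w$ falls outside $\uparr\mathbf{F}$ exactly when every $F\in\mathbf{F}_v$ contains $e$, i.e. when $e\in\bigcap_{F\in\mathbf{F}_v}F$. Hence the degree of $v$ is at most $|\bigcap_{F\in\mathbf{F}_v}F|\leq r(\mathbf{F})\leq k$, regardless of the parity of $v$.

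The second case, $v\notin\uparr\mathbf{F}$, is where I expect the main obstacle. Now the opposite-parity vertices in $v$'s own part are those \emph{inside} $\uparr\mathbf{F}$; removing an element keeps $v$ outside $\uparr\mathbf{F}$, so the only relevant neighbours have the form $w=v\cup\{e\}$ with $w\in\uparr\mathbf{F}$. For each such $e$ there is $F_e\in\mathbf{F}$ with $F_e\subseteq v\cup\{e\}$, and since $v\notin\uparr\mathbf{F}$ forces $F_e\not\subseteq v$, we must have $e\in F_e$. The crux is that these witnesses are \emph{private}: for a second such element $e'\neq e$ we have $e\notin v$ and $F_{e'}\subseteq v\cup\{e'\}$, whence $e\notin F_{e'}$. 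Thus $e\mapsto F_e$ is injective and the family $\{F_e\}\subseteq\mathbf{F}$ has the property that each member owns an element contained in no other member of the family; by definition its size is at most $t(\mathbf{F})$. This caps the degree of $v$ by $t(\mathbf{F})\leq k$, and together with the first case gives $\max\{\Delta(G(\mathbf{F})),\Delta(G'(\mathbf{F}))\}\leq k$.

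For the imbalance I would certify $\iota_k(G_{\mathcal{L}})$ using the partition $\mathbf{X}(\mathbf{F})\sqcup(\mathcal{L}\setminus\mathbf{X}(\mathbf{F}))$, which is admissible by the first part. Writing $\phi(S)=|\even(S)|-|\odd(S)|$ and using that $G_{\mathcal{L}}$ regular bipartite forces $|\even(\mathcal{L})|=|\odd(\mathcal{L})|$, a direct computation collapses $|\mathbf{X}(\mathbf{F})|-|\mathcal{L}\setminus\mathbf{X}(\mathbf{F})|$ to $2\phi(\uparr\mathbf{F})$, so that $\tfrac{1}{2}\iota_k(G_{\mathcal{L}})\geq\phi(\uparr\mathbf{F})$. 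Finally I would expand $\phi$ over the union $\uparr\mathbf{F}=\bigcup_{F_i\in\mathbf{F}}\uparr F_i$ by inclusion--exclusion; here the lattice hypothesis enters through the identity $\uparr F_i\cap\uparr F_j=\uparr(F_i\vee F_j)$ and its higher analogues $\bigcap_i\uparr F_i=\uparr(\bigvee_i F_i)$, valid because joins in $\mathcal{L}$ are least upper bounds. Substituting these intersections into the inclusion--exclusion expansion of $\phi(\uparr\mathbf{F})$ produces exactly the claimed alternating sum $\sum_i\phi(\uparr F_i)-\sum_{i<j}\phi(\uparr(F_i\vee F_j))\pm\cdots$.
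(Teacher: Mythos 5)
Your proof is correct and follows essentially the same route as the paper's: the even/odd (equivalently, in/out of $\uparr\mathbf{F}$) case split bounding the degree by $r(\mathbf{F})$ via the common intersection of the sets below $v$ and by $t(\mathbf{F})$ via the private-element witnesses, followed by inclusion--exclusion with $\uparr F_i\cap\uparr F_j=\uparr(F_i\vee F_j)$ for the imbalance. The only (harmless) difference is organizational: you case-split on membership in $\uparr\mathbf{F}$ so as to treat $G(\mathbf{F})$ and $G'(\mathbf{F})$ simultaneously, where the paper splits on parity within $G(\mathbf{F})$ and declares $G'(\mathbf{F})$ analogous.
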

\begin{proof}
 Since the statement for $G'(\mathbf{F})$ is proved analogously, here we only prove $\Delta(G(\mathbf{F}))\leq\max\{r(\mathbf{F}),t(\mathbf{F})\}$.
 So let $\{S,S'\}$ be an edge of $G(\mathbf{F})$. 

 If $S$ is even, then $S'\prec S$ is a cover relation, $S'$ is odd, and for all $F\in\dwnarr(S)\cap\mathbf{F}$ we have $S'\vee F=S$. Thus, the coordinate corresponding to the element $S\setminus S'$ is contained in $\bigcap (\dwnarr(S)\cap\mathbf{F})$. Hence, $\deg(S)\leq |\bigcap (\dwnarr(S)\cap\mathbf{F})|\leq r(\mathbf{F})$.
 
 If $S$ is odd, then $S\prec S'$ is a cover relation, $S'$ is even and the only element $s \in S'\setminus S$ belongs to some $F\subseteq S'=S \cup \{s\}$ such that $F \in \mathbf{F}$. Thus, the neighbors $S\prec S'_1, \ldots S'_k$ give rise to a set $X=\{F_1, \ldots, F_k\}\subseteq\mathbf{F}$ such that $s_i\in F_i\setminus( \cup_{j \neq i} F_j)$ for all $1\leq i\leq k$. Thus, $\deg(S)\leq t(\mathbf{F})$.


 For the second part of the statement we estimate $|\mathbf{X}(\mathbf{F})|=|\even(\uparr \mathbf{F})|+|\odd(\mathcal{L}\setminus\uparr\mathbf{F})|$ via inclusion-exclusion. The size of the first term can be written as $$|\even(\uparr \mathbf{F})|=\sum_{1\leq i\leq k}|\even(\uparr F_i)|-\sum_{1\leq i<j\leq k}|\even(\uparr(F_i\vee F_j))|\pm\ldots$$

Similarly, we can express the size of the second term as:
$$|\odd(\mathcal{L}\setminus\uparr\mathbf{F})|=|\odd(\mathcal{L})|-\sum_{1\leq i\leq k}|\odd(\uparr F_i)|+\sum_{1\leq i<j\leq k}|\odd(\uparr(F_i\vee F_j))|\mp\ldots$$

Since $G_{\mathcal{L}}$ is bipartite and regular we have $|\odd(\mathcal{L})|=|\even(\mathcal{L})|=\frac{|\mathcal{L}|}{2}$. We can write $|\mathbf{X}(\mathbf{F})|$ as:
$$\frac{|\mathcal{L}|}{2}+\sum_{1\leq i\leq k}|\even(\uparr F_i)|-|\odd(\uparr F_i)|-\sum_{1\leq i<j\leq k}|\even(\uparr(F_i\vee F_j))|-|\odd(\uparr(F_i\vee F_j))|\pm\ldots$$
This concludes the proof.
\end{proof}
 

When $\mathcal{L}$ is the Boolean lattice $\mathcal B_d$ itself, the first part of Lemma~\ref{lem:bound} is \cite[Proposition 3.3]{CFGS88}. Thus, applying Lemma~\ref{lem:bound} with an appropriate set $\mathbf{F} = \{F_1,\ldots,F_k\}$; for example, $\mathbf{F}$ is any partition of $\{1,\ldots,d\}$ with $\sqrt{d} - 1 < k < \sqrt{d} + 1$ and $\sqrt{d}-1 < |F_i| < \sqrt{d}+1$ for all $i \in \{1,\ldots,k\}$, one recovers the following:

\begin{theorem}[\!\cite{CFGS88}]\label{thm:CFGS88}
 For any integer $d$, we have $\iota_{\lceil\sqrt{d}\rceil}(Q_d)\geq 2$. In particular, $\sigma(Q_d)\leq \lceil\sqrt{d}\rceil$.
\end{theorem}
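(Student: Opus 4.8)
The plan is to invoke Lemma~\ref{lem:bound} with $\mathcal{L}=\mathcal{B}_d$, whose cover graph $G_{\mathcal{B}_d}$ is precisely $Q_d$ (it is $d$-regular and bipartite), for a suitably chosen family $\mathbf{F}$. Put $k=\lceil\sqrt{d}\rceil$, so that $k\leq d\leq k^2$. First I would partition $[d]$ into exactly $k$ \emph{nonempty} blocks $B_1,\ldots,B_k$, each of size at most $k$; this is feasible precisely because $k\leq d$ (to make the blocks nonempty) and $d\leq k^2$ (to keep each block of size $\leq k$). Set $\mathbf{F}=\{B_1,\ldots,B_k\}$. Then $r(\mathbf{F})=\max_i|B_i|\leq k$, and since the $B_i$ are pairwise disjoint every subfamily has the private-element property, whence $t(\mathbf{F})=|\mathbf{F}|=k$. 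Thus $\max\{r(\mathbf{F}),t(\mathbf{F})\}=k$, and the first part of Lemma~\ref{lem:bound} gives at once that both $G(\mathbf{F})$ and $G'(\mathbf{F})$ — the subgraphs induced by $\mathbf{X}(\mathbf{F})$ and by its complement — have maximum degree at most $\lceil\sqrt{d}\rceil$. Note that here $|\mathbf{F}|=\max\{r(\mathbf{F}),t(\mathbf{F})\}=k$, so the index range in the inclusion–exclusion of the lemma is consistent.

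Next I would evaluate the imbalance through the second part of the lemma. The key computation is, for a union $G=B_{i_1}\cup\cdots\cup B_{i_m}$, the signed count $|\even(\uparr G)|-|\odd(\uparr G)|$. In $\mathcal{B}_d$ the join is the union and $\uparr G$ is in bijection with the subsets of $[d]\setminus G$ via $T\mapsto G\cup T$, so this difference equals $(-1)^{|G|}\sum_{T\subseteq[d]\setminus G}(-1)^{|T|}$, which vanishes unless $G=[d]$ and equals $(-1)^d$ when $G=[d]$. Because $B_1,\ldots,B_k$ partition $[d]$ into nonempty blocks, the only subfamily of $\mathbf{F}$ whose union is all of $[d]$ is the full family. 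Hence every term of the alternating sum in Lemma~\ref{lem:bound} cancels except the last one (coming from all $k$ blocks, with sign $(-1)^{k-1}$), yielding $|\mathbf{X}(\mathbf{F})|-2^{d-1}=(-1)^{d+k-1}=\pm1$.

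To finish, I would use the partition $V(Q_d)=\mathbf{X}(\mathbf{F})\sqcup(\mathcal{L}\setminus\mathbf{X}(\mathbf{F}))$: by the first part both sides induce maximum degree at most $k$, and by the computation above their sizes differ by exactly $2$. Since $\iota_k$ is the maximum of $|A|-|B|$ over such partitions and one may always call the larger part $A$, this gives $\iota_{\lceil\sqrt{d}\rceil}(Q_d)\geq2$. The bound $\sigma(Q_d)\leq\lceil\sqrt{d}\rceil$ then follows from the observation preceding Lemma~\ref{lem:prodimb}: the larger part is a set of $2^{d-1}+1>\alpha(Q_d)=2^{d-1}$ vertices inducing a subgraph of maximum degree at most $\lceil\sqrt{d}\rceil$.

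The main subtlety is exactly that the inclusion–exclusion produces $\pm1$ rather than $+1$, so one cannot read off $\iota_k\geq2$ directly from the lemma's lower bound, which may be vacuous ($\geq-1$). The resolution is that the degree bound in the first part of Lemma~\ref{lem:bound} applies symmetrically to $\mathbf{X}(\mathbf{F})$ and to its complement, so it suffices to keep whichever half is larger; the parity $(-1)^{d+k-1}$ is then irrelevant. I expect the only other point needing care to be the bookkeeping of the partition (nonempty blocks of size $\leq k$), which rests entirely on the elementary inequalities $k\leq d\leq k^2$.
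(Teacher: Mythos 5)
Your proof is correct and follows exactly the route the paper intends: the paper derives this theorem from Lemma~\ref{lem:bound} applied to the Boolean lattice with ``an appropriate set $\mathbf{F}$'', and your partition of $[d]$ into $\lceil\sqrt{d}\rceil$ nonempty blocks of size at most $\lceil\sqrt{d}\rceil$ is precisely that set, with $r(\mathbf{F})$, $t(\mathbf{F})$ and the inclusion--exclusion computed correctly. Your observation that the alternating sum yields only $\pm 1$, so that one must pass to the larger of $\mathbf{X}(\mathbf{F})$ and its complement (legitimate, since both induce maximum degree at most $\lceil\sqrt{d}\rceil$ by the first part of the lemma and $\iota_k$ is symmetric in the two parts), is a correct filling-in of a detail the paper glosses over.
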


Note that with Remark~\ref{rem:lattice} there is a wider class of lattices where Lemma~\ref{lem:bound} can be applied. In particular if the graph $G_{\mathcal{L}}$ of the lattice $\mathcal{L}$ is the dual graph of a simplicial hyperplane arrangement, then $G_{\mathcal{L}}$ is regular. Clearly, Lemma~\ref{lem:bound} is only useful together with a smartly chosen set $\mathbf{F}$. We will come back to this in the next section. 

\section{Coxeter groups}\label{sec:Coxeter}
We consider Cayley graphs of Coxeter groups and  provide explicit constructions showing that the bound in Proposition \ref{pr:deltasubgraph} in each case is either an equality or at most one unit away from an equality.

More precisely, we first introduce the notion of cube-like Coxeter groups. This allows us to establish equality for Coxeter groups of types $\mathbf{I_2}(2k+1)$, $\mathbf{A_n}$, and their products. Further we show equality for types $\mathbf{I_2}(n)$ and $\mathbf{I_2}(n)\times \mathbf{I_2}(n')$, and many small Coxeter groups by computer. We also show that types $\mathbf{B_n}$ and $\mathbf{D_n}$ cannot deviate by more than one unit from the lower bound.  We finish the section with a conjecture. 

We start with the necessary definitions and refer the reader to \cite{CombCoxeterBook, SimpleGroup} for more thorough introductions into the combinatorics of Coxeter groups. A finite Coxeter system is a pair $(W,S)$, where $W$ is a group with generators $S = \{a_1,\ldots,a_n\}$ and presentation $W = \langle a_1,\ldots, a_n \, \vert \, (a_i a_j)^{m_{ij}} = 1 \rangle$ where $m_{ij} > 1$ and $m_{ii} = 2$. In \cite{Coxeter}, Coxeter classified all finite Coxeter groups as (direct products of) the members of three infinite families of increasing rank $\mathbf{A_{n}},\mathbf{B_{n}},\mathbf{D_{n}}$, one  family of dimension two $\mathbf{I_2}(n)$, and six exceptional groups: $\mathbf{E_{6}},\ \mathbf{E_{7}},$ $\ \mathbf{E_{8}},\ \mathbf{F_{4}},\ \mathbf{H_{3}}$ and $\mathbf{H_{4}}$.

  \begin{figure}[ht]
\includegraphics[width=.9\textwidth]{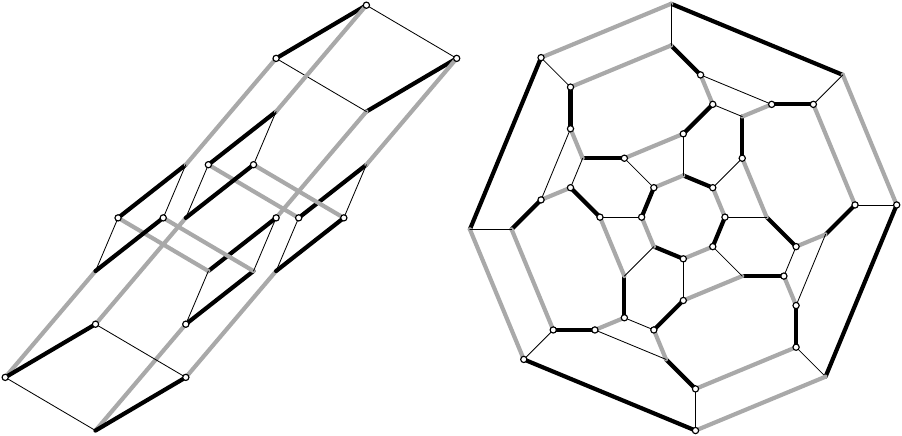}
\caption{Induced subgraphs of maximum degree $2$ in $\Cay(\mathbf{A_3})$ and $\Cay(\mathbf{B_3})$.} \label{fig:B3}
\end{figure}

Since any Coxeter group $W$ corresponds to a unique Coxeter system $(W,S)$, we denote the Cayley graph $\Cay(W,S)$ just as $\Cay(W)$.  See Figure~\ref{fig:B3} for a drawing of the Cayley graphs of $\mathbf{A_3}$ and $\mathbf{B_3}$. 
It is well known that the Cayley graph  of a Coxeter groups $\Cay(W)$ is the dual graph of a simplicial hyperplane arrangement -- the Coxeter arrangement of the corresponding type. As explained in Remark~\ref{rem:lattice} this gives that $\Cay(W)$ is a partial cubes -- an isometric subgraph of a hypercube. The dimension of this hypercube corresponds to the number of hyperplanes in the arrangement, which is the number $r$ of \emph{reflections} of $W$, i.e., the elements of order $2$. See Table~\ref{tab:K} for the number of reflections of the irreducible Coxeter groups and for the dimension of the largest hypercube contained in their corresponding Cayley graphs, this number coincides with the size of the largest independent set of their Coxeter-Dynkin diagrams. 

\begin{center}
\begin{table}[h]
\begin{tabular}{r|cccccccccc} 
 
 ~&$\mathbf{A_{n}}$&$\mathbf{B_{n}}$&$\mathbf{D_{n}}$&$\mathbf{I_2}(n)$&$\mathbf{E_{6}}$&$\mathbf{E_{7}}$&$\mathbf{E_{8}}$&$\mathbf{F_{4}}$&$\mathbf{H_{3}}$&$\mathbf{H_{4}}$\\
 \hline
 $\kappa$&$\lceil\frac{n}{2}\rceil$&$\lceil\frac{n}{2}\rceil$&$\lceil\frac{n+1}{2}\rceil$&$1$&$3$&$4$&$4$&$2$&$2$&$2$\\
 $r$&$\frac{n(n+1)}{2}$&$n^2$&$n(n-1)$&$n$&$36$&$63$&$120$&$12$&$10$&$30$\\

\end{tabular}
\caption{Largest cube and number of reflections in irreducible Coxeter groups. }\label{tab:K}
\end{table}
\end{center}

Following Remark~\ref{rem:lattice}, from the simpliciality of the Coxeter arrangement we get that $\Cay(W)$ is regular and the cover graph of a lattice $\mathcal{L}_W$ that is the cover subposet of a Boolean lattice. We are thus in the position to apply Lemma~\ref{lem:bound} once we have found an interesting set $\mathbf{F}$. In order to get there, we will proceed to introduce more specific properties of $\mathcal{L}_W$, mostly taken from~\cite{CombCoxeterBook,SimpleGroup}.
Taking as base-point of $\Cay(W)$ the neutral element $e\in W$, with the notation of Lemma~\ref{lem:partialcubes}, the lattice $P(\Cay(W),e)=\mathcal{L}_W$ is called the \emph{weak (right) order}~\cite{B84}. For two group elements we have $w\leq w'$ if $d_{\Cay(W)}(e,w')=d_{\Cay(W)}(e,w)+d_{\Cay(W)}(w,w')$. This makes it convenient to denote the \emph{length} of an element $w\in W$ is $\ell(w)$, which is the distance from $e$ in $\Cay(W)$, i.e., $\ell(w)=d_{\Cay(W)}(e,w)$.

For $J\subseteq S$, we denote by $W_J$ the subgroup of $W$ generated by $J$. The Coxeter system $(W_J,J)$ is called a \emph{parabolic subgroup} of $(W,S)$.
Note that the graph $\Cay(W_J)$ is a subgraph of $\Cay(W)$ and hence $\sigma(\Cay(W_J))\leq \sigma(\Cay(W))$, by Proposition~\ref{pr:deltasubgraph}.
The set $W^J=\{w\in W\mid \ell(wj)>\ell(w) \text{ for all }j\in J\}$ is the corresponding \emph{quotient}. We collect some facts about $W_J$ and $W^J$ with respect to $\mathcal{L}_W$.
\begin{enumerate}
 \item the elements of $W_J$ define an order-interval $I(W_J)$ that  induces a sublattice of $\mathcal{L}_{W}$,
 \item the elements of $W^J$ define an order-interval $I(W^J)$, whose graph we denote by $G(W^J)$, moreover $I(W^J)$ is isomorphic to the reversed order $I(W^J)^*$,
 \item the set of isomorphic intervals $\{jW^J\mid j\in W_J\}$ partitions $\mathcal{L}_{W}$ and each of them intersects $I(W_J)$ exactly in the element $j$,
 \item the set $\{W_Ji\mid i\in W^J\}$ partitions $\mathcal{L}_{W}$ and each of them induces a graph $G^i$ isomorphic to a subgraph of $G_{I(W_J)}$. \item the edges of $\Cay(W)$ are partitioned into the edges of $\{G^i\mid i \in W^J\}$ and $\{jG(W^J)\mid  j\in W_J\}$. 
 \end{enumerate}
 
 The last item yields that $\Cay(W)$ is a subgraph of $\Cay(W_J) \square G(W^J)$, thus with Lemma~\ref{lem:prodimb} we conclude:

\begin{lemma}\label{lem:intervals}
Let $(W,S)$ be a Coxeter system and $J\subseteq S$. 
We have $$ \iota_k(\Cay(W_J))\iota_{\ell}(G(W^J))\leq \iota_{k+\ell}(\Cay(W)).$$
\end{lemma}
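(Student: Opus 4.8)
The plan is to derive Lemma~\ref{lem:intervals} directly from the final structural fact (item~5) in the list preceding it, combined with Lemma~\ref{lem:prodimb}. The key input is that the edges of $\Cay(W)$ are partitioned into the edges of the graphs $\{G^i \mid i \in W^J\}$ and the graphs $\{jG(W^J) \mid j \in W_J\}$, where each $G^i$ is isomorphic to a subgraph of $G_{I(W_J)} = \Cay(W_J)$ and each copy $jG(W^J)$ is isomorphic to $G(W^J)$. As stated in the text immediately after item~5, this edge partition is exactly what witnesses that $\Cay(W)$ is a subgraph of the Cartesian product $\Cay(W_J) \square G(W^J)$.

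First I would make the subgraph inclusion precise. The vertex set of $\Cay(W)$ is $W$, and items~3 and~4 give two ways to index each $w \in W$: by the pair $(j, i) \in W_J \times W^J$ such that $w = ji$ (this is the standard parabolic factorization, unique by the coset structure). I would use this bijection $W \to W_J \times W^J$ as the identification of vertex sets with those of $\Cay(W_J) \square G(W^J)$. Under this identification, the edges coming from $\{jG(W^J) \mid j \in W_J\}$ are precisely edges that fix the first coordinate $j$ and move the second coordinate within $W^J$, matching one type of Cartesian-product edge; the edges coming from $\{G^i \mid i \in W^J\}$ fix the second coordinate $i$ and move the first coordinate within $W_J$, matching the other type. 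Hence every edge of $\Cay(W)$ is an edge of $\Cay(W_J) \square G(W^J)$, so $\Cay(W)$ is a (spanning) subgraph of the product.

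Next I would invoke the two monotonicity facts already available. Lemma~\ref{lem:prodimb} gives $\iota_k(\Cay(W_J))\,\iota_{\ell}(G(W^J)) \leq \iota_{k+\ell}(\Cay(W_J)\square G(W^J))$. Then I would use the observation stated in the excerpt just before Lemma~\ref{lem:prodimb} that $\iota$ is subgraph-monotone, namely that if $H$ is a subgraph of $G$ then $\iota_m(G) \leq \iota_m(H)$ for every $m \geq 0$. Applying this with $H = \Cay(W)$, $G = \Cay(W_J)\square G(W^J)$, and $m = k+\ell$ yields $\iota_{k+\ell}(\Cay(W_J)\square G(W^J)) \leq \iota_{k+\ell}(\Cay(W))$. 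Chaining the two inequalities gives exactly $\iota_k(\Cay(W_J))\,\iota_{\ell}(G(W^J)) \leq \iota_{k+\ell}(\Cay(W))$, as claimed.

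The routine part is verifying the edge-correspondence in the second paragraph, which is immediate once one trusts item~5 and the parabolic factorization; I expect the only genuinely delicate point to be confirming that the two indexings from items~3 and~4 are compatible and jointly give the clean bijection $W \cong W_J \times W^J$ used to identify vertex sets, and that the edge types in item~5 align with the correct coordinate of the Cartesian product. Since all of this structure is quoted as known from \cite{CombCoxeterBook,SimpleGroup}, the proof reduces to assembling these cited facts with Lemma~\ref{lem:prodimb} and subgraph-monotonicity of $\iota$, so no substantial new obstacle should arise.
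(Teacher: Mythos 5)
Your proposal is correct and follows exactly the route the paper takes: item~5 of the listed facts gives that $\Cay(W)$ is a (spanning) subgraph of $\Cay(W_J)\square G(W^J)$, and then Lemma~\ref{lem:prodimb} combined with the subgraph-monotonicity of $\iota_k$ yields the claimed inequality. The paper states this in a single sentence before the lemma; you have merely filled in the details of the vertex identification $W\cong W_J\times W^J$ and the edge-type correspondence, which is a faithful elaboration of the same argument.
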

%
%

We call a Coxeter system $(W,S)$ \emph{cube-like} if it admits an abelian parabolic subgroup $W_J$ such that $\iota_0(G(W^J))>0$. A consequence of Lemma~\ref{lem:intervals} together with Theorem~\ref{thm:CFGS88} is:
\begin{proposition}\label{prop:cubelike}
 If $(W,S)$ is cube-like with respect to $J \subseteq S$, then we have $\sigma(\Cay(W))=\lceil\sqrt{\kappa(\Cay(W))}\rceil$ and  $\Cay(W)$ has an induced subgraph of maximum degree $\sigma(\Cay(W))$ on $\frac{|W|}{2}+\iota_0(G(W^J))$ vertices.
\end{proposition}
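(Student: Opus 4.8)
The plan is to sandwich $\sigma(\Cay(W))$ between two matching bounds, one coming from Proposition~\ref{pr:deltasubgraph} and one from the construction underlying Theorem~\ref{thm:CFGS88} transported through Lemma~\ref{lem:intervals}. First I would record the structural facts I need. Since $W_J$ is abelian, all generators in $J$ commute pairwise, i.e.\ $m_{ij}=2$ for distinct $i,j\in J$, so $W_J\cong(\Z_2)^{|J|}$ and hence $\Cay(W_J)\cong Q_{|J|}$. Because $W_J$ is a parabolic subgroup, $\Cay(W_J)$ is a subgraph of $\Cay(W)$, so $Q_{|J|}$ embeds in $\Cay(W)$ and therefore $\kappa(\Cay(W))\geq |J|$. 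I would also note that $\Cay(W)$ is bipartite (the length function gives a proper $2$-colouring, as right-multiplication by a generator changes length by exactly $\pm 1$) and regular, so that Proposition~\ref{pr:deltasubgraph} and the imbalance machinery both apply. For the lower bound I would then invoke Proposition~\ref{pr:deltasubgraph} to get $\sigma(\Cay(W))\geq\sqrt{\kappa(\Cay(W))}$ and upgrade it, by integrality of $\sigma$, to $\sigma(\Cay(W))\geq\lceil\sqrt{\kappa(\Cay(W))}\rceil$.

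For the upper bound the idea is to feed the hypercube construction through the product inequality. Setting $k=\lceil\sqrt{|J|}\rceil$ and $\ell=0$ in Lemma~\ref{lem:intervals}, and using $\Cay(W_J)\cong Q_{|J|}$, gives
$$\iota_{\lceil\sqrt{|J|}\rceil}(Q_{|J|})\cdot\iota_0(G(W^J))\leq\iota_{\lceil\sqrt{|J|}\rceil}(\Cay(W)).$$
By Theorem~\ref{thm:CFGS88} the first factor is at least $2$, and by the cube-like hypothesis $\iota_0(G(W^J))>0$, so the right-hand side is at least $2\iota_0(G(W^J))>0$. Since $\Cay(W)$ is bipartite and regular, positivity of $\iota_{\lceil\sqrt{|J|}\rceil}(\Cay(W))$ yields $\sigma(\Cay(W))\leq\lceil\sqrt{|J|}\rceil$. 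Now the two bounds close up: using $|J|\leq\kappa(\Cay(W))$,
$$\lceil\sqrt{\kappa(\Cay(W))}\rceil\leq\sigma(\Cay(W))\leq\lceil\sqrt{|J|}\rceil\leq\lceil\sqrt{\kappa(\Cay(W))}\rceil,$$
so every inequality is an equality and $\sigma(\Cay(W))=\lceil\sqrt{\kappa(\Cay(W))}\rceil=\lceil\sqrt{|J|}\rceil$.

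For the second assertion I would unpack the inequality $\iota_{\sigma(\Cay(W))}(\Cay(W))\geq 2\iota_0(G(W^J))$ obtained above. It provides a partition $V(\Cay(W))=A\sqcup B$ with $\Delta(\Cay(W)[A]),\Delta(\Cay(W)[B])\leq\sigma(\Cay(W))$ and $|A|-|B|\geq 2\iota_0(G(W^J))$, so the larger part $A$ has at least $\frac{|W|}{2}+\iota_0(G(W^J))$ vertices. Deleting vertices from $A$ one at a time, which cannot raise the induced maximum degree, I obtain an induced subgraph on exactly $\frac{|W|}{2}+\iota_0(G(W^J))$ vertices of maximum degree at most $\sigma(\Cay(W))$. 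Since $\iota_0(G(W^J))>0$ this set has more than half of the vertices, so by the very definition of $\sigma$ its maximum degree is also at least $\sigma(\Cay(W))$, hence equal to it.

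The computation itself is short; the real content is organizational, and the step I expect to require the most care is making sure the imported results apply verbatim: that $\Cay(W_J)$ is genuinely the hypercube $Q_{|J|}$ (so Theorem~\ref{thm:CFGS88} can be used) and that $\Cay(W)$ is bipartite and regular (so that positivity of $\iota_k$ really bounds $\sigma$, and so that a set of more than half the vertices witnesses $\sigma$). The conceptually pleasant point, which I would highlight, is that one never needs $J$ to be a \emph{maximum} abelian parabolic: the single inequality $|J|\leq\kappa(\Cay(W))$ already forces $\lceil\sqrt{|J|}\rceil=\lceil\sqrt{\kappa(\Cay(W))}\rceil$, and this is exactly what makes the sandwich collapse.
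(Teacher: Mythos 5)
Your proposal is correct and follows essentially the same route as the paper's proof: the lower bound from Proposition~\ref{pr:deltasubgraph}, the upper bound from Theorem~\ref{thm:CFGS88} fed through Lemma~\ref{lem:intervals} with $\ell=0$, and the sandwich $\sigma(\Cay(W))\leq\lceil\sqrt{|J|}\rceil\leq\lceil\sqrt{\kappa(\Cay(W))}\rceil\leq\sigma(\Cay(W))$. The only difference is that you spell out the structural preliminaries ($\Cay(W_J)\cong Q_{|J|}$, bipartiteness) and the extraction of the vertex set of size $\frac{|W|}{2}+\iota_0(G(W^J))$, which the paper leaves implicit.
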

\begin{proof}
{Denote $d = \kappa(\Cay(W))$, by Proposition \ref{pr:deltasubgraph} we have that $\sigma(\Cay(W)) \geq \lceil\sqrt{d}\rceil$}. 
Since $W_J$ is abelian and minimally generated by $J$, $\Cay(W_J)$ is a cube contained in $\Cay(W)$. Denote its dimension by { $d' = |J|$, we have that $d' \leq  d$}. By Theorem~\ref{thm:CFGS88}, we have that { $\iota_{\lceil\sqrt{d'}\rceil}\Cay(W_J) \geq 2$}. Lemma~\ref{lem:intervals} yields $$\iota_{{\lceil\sqrt{d'}\rceil}+0}(\Cay(W))\geq \iota_{\lceil\sqrt{d'}\rceil}(\Cay(W_J))\iota_{0}(G(W^J))\geq 2\iota_{0}(G(W^J)) > 0.$$ 
As a consequence { $\sigma(\Cay(W)) \leq \lceil\sqrt{d'}\rceil \leq \lceil\sqrt{d}\rceil \leq \sigma(\Cay(W))$; so they are all equalities and we are done.}
\end{proof}
%
%
%
%

A useful feature of cube-like Coxeter groups is that they are closed under products:
\begin{proposition}
 If $(W,S)$ and $(W',S')$ are cube-like, then so is their product $(W\times W',S\times\{e'\}\cup\{e\}\times S')$. Moreover, $\iota_{0}(G(W^J)\square G(W'^{J'}))\geq \iota_0(G(W^J))\iota_{0}(G(W'^{J'}))$.
\end{proposition}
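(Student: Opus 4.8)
The plan is to exhibit an explicit abelian parabolic subgroup of the product witnessing cube-likeness, and to identify its quotient graph as a Cartesian product so that Lemma~\ref{lem:prodimb} applies directly. Write $\Gamma = W \times W'$ with generating set $\hat S = (S\times\{e'\})\cup(\{e\}\times S')$, and let $J\subseteq S$, $J'\subseteq S'$ be the subsets witnessing cube-likeness of $(W,S)$ and $(W',S')$. The natural candidate is $\hat J = (J\times\{e'\})\cup(\{e\}\times J')$. First I would check that the parabolic subgroup $\Gamma_{\hat J}$ equals $W_J\times W'_{J'}$, which is a product of abelian groups and hence abelian; this is immediate, since the generators in $J\times\{e'\}$ commute with those in $\{e\}\times J'$ across the two factors and generate the two sides independently.

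The key step is to show that the quotient graph decomposes as $G(\Gamma^{\hat J}) = G(W^J)\,\square\,G(W'^{J'})$. For this I would use the standard fact that for a direct product of Coxeter systems the length function is additive, $\ell((w,w'))=\ell(w)+\ell(w')$, so that the weak order $\mathcal{L}_\Gamma$ is the poset product $\mathcal{L}_W\times\mathcal{L}_{W'}$ and $\Cay(\Gamma)=\Cay(W)\,\square\,\Cay(W')$. From additivity of length, $(w,w')\in\Gamma^{\hat J}$ iff $\ell((w,w')\hat s)>\ell((w,w'))$ for every $\hat s\in\hat J$, which separates into the two independent conditions $w\in W^J$ and $w'\in W'^{J'}$; hence $\Gamma^{\hat J}=W^J\times W'^{J'}$. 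Since the cover graph of a product of graded posets is the Cartesian product of the cover graphs --- a cover relation raises total length by one and therefore changes exactly one coordinate by a cover --- the interval $I(\Gamma^{\hat J})$ is $I(W^J)\times I(W'^{J'})$ and its graph is $G(W^J)\,\square\,G(W'^{J'})$, as claimed.

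With this identification the ``Moreover'' statement, and hence cube-likeness, follow at once from Lemma~\ref{lem:prodimb} applied with $k=\ell=0$:
\[
\iota_0\bigl(G(W^J)\,\square\,G(W'^{J'})\bigr)\ \geq\ \iota_0(G(W^J))\,\iota_0(G(W'^{J'})).
\]
Because $(W,S)$ and $(W',S')$ are cube-like, the right-hand side is a product of two positive integers and is therefore positive, giving $\iota_0(G(\Gamma^{\hat J}))>0$. Together with the abelianness of $\Gamma_{\hat J}$ established above, this shows that $(\Gamma,\hat S)$ is cube-like with respect to $\hat J$, completing the proof.

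I expect the only genuine obstacle to be the second step: justifying that $G(\Gamma^{\hat J})$ is \emph{exactly} the Cartesian product $G(W^J)\,\square\,G(W'^{J'})$, rather than merely a subgraph of it. This rests on the product structure of the weak order together with the fact that the Hasse diagram of a product poset is the Cartesian product of the Hasse diagrams; once the quotient has been identified set-theoretically as $W^J\times W'^{J'}$, matching the cover relations coordinatewise is routine, but it should be stated with care since the definition of $G(W^J)$ and of the interval $I(W^J)$ depends on the ambient weak order $\mathcal{L}_\Gamma$ restricting correctly to the two factors.
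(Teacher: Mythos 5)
Your proposal is correct and follows essentially the same route as the paper: take $\hat J = (J\times\{e'\})\cup(\{e\}\times J')$, observe the resulting parabolic subgroup is abelian, identify $G(\Gamma^{\hat J})$ with $G(W^J)\,\square\,G(W'^{J'})$, and apply Lemma~\ref{lem:prodimb} with $k=\ell=0$. The paper simply asserts the Cartesian-product identification of the quotient graph, whereas you justify it via additivity of the length function; this is a welcome elaboration rather than a different approach.
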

\begin{proof}
 If $J,J'$ yield the two parabolic subgroups witnessing that $(W,S)$ and $(W',S')$ are cube-like, then also $J\times\{e'\}\cup \{e\}\times J'$ generates an abelian parabolic subgroup of $(W\times W',S\times\{e'\}\cup\{e\}\times S')$. The graph $G$ of quotient $W\times W'^{J\times\{e'\}\cup \{e\}\times J'}$ is the Cartesian product $G(W^J)\square G(W'^{J'})$. It follows from Lemma~\ref{lem:prodimb}, that  \[\iota_{0}(G(W^J)\square G(W'^{J'}))\geq \iota_0(G(W^J))\iota_{0}(G(W'^{J'}))>0. \qedhere\]  \end{proof}

We present some necessary and one sufficient criterion for being cube-like:
\begin{proposition}\label{prop:necessary}
Let $(W,S)$ be a Coxeter system with $r$ reflections. If $(W,S)$  is cube-like with respect to $J$, then
\begin{enumerate}
\item $\lceil\sqrt{\kappa(\Cay(W))}\rceil=\lceil\sqrt{|J|}\rceil$,                                                                 \item $r-|J|$ is even,
\item $J$ is inclusion-maximal with respect to generating an abelian subgroup.                                                                           \end{enumerate}
Conversely, if $r-|J|$ is even and the middle layer of $I(W^J)$ is odd, then $(W,S)$ is cube-like with respect to $J$.
\end{proposition}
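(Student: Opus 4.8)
The plan is to reduce all four assertions to a single translation between the combinatorial quantity $\iota_0$ and the length enumeration of the quotient $W^J$, and then to exploit the self-duality of $I(W^J)$ recorded in fact~(2). Write $a_k=\#\{w\in W^J\mid \ell(w)=k\}$ and $W^J(q)=\sum_k a_k q^k$. First I would observe that $G(W^J)$, being the Hasse diagram of the lattice interval $I(W^J)$, is connected and bipartite, the two colour classes being the elements of even and of odd length. For a connected bipartite graph the proper $2$-colouring is the only partition into two independent sets, so
$$\iota_0(G(W^J))=\bigl|\,|\even(W^J)|-|\odd(W^J)|\,\bigr|=|W^J(-1)|.$$
Next, since an abelian Coxeter group is a product of $|J|$ commuting involutions, $W_J\cong(\mathbb{Z}/2)^{|J|}$ and its longest element has length $|J|$; combined with $\ell(w_0)=r$ and $\ell(w_0)=\ell(w_{0,J})+\ell(w_0^J)$, this shows $I(W^J)$ is graded from $e$ to $w_0^J$, of length $r-|J|$. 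Finally fact~(2) gives a rank-reversing self-duality of $I(W^J)$, hence $a_k=a_{\,r-|J|-k}$ for all $k$.

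\emph{Items (1) and (2).} Item (1) is already contained in the proof of Proposition~\ref{prop:cubelike}: there the chain $\sigma(\Cay(W))\le\lceil\sqrt{|J|}\rceil\le\lceil\sqrt{\kappa(\Cay(W))}\rceil\le\sigma(\Cay(W))$ collapses to equalities, so in particular $\lceil\sqrt{|J|}\rceil=\lceil\sqrt{\kappa(\Cay(W))}\rceil$. For item (2), if $r-|J|$ were odd then the rank-reversing self-duality would biject each even-length layer with an odd-length layer, forcing $|\even(W^J)|=|\odd(W^J)|$ and hence $\iota_0(G(W^J))=0$, contradicting cube-likeness; so $r-|J|$ is even.

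\emph{Item (3) and the converse.} For (3) I would argue by contradiction: suppose $W_{J'}$ is abelian for some $J\subsetneq J'$. By transitivity of the parabolic factorization, every $w\in W^J$ factors uniquely as $w=uv$ with $u\in (W_{J'})^J:=W_{J'}\cap W^J$ and $v\in W^{J'}$, with lengths adding, so $W^J(q)=(W_{J'})^J(q)\,W^{J'}(q)$. As $W_{J'}\cong(\mathbb{Z}/2)^{|J'|}$, the minimal representatives of $W_J\backslash W_{J'}$ are exactly the subsets of $J'\setminus J$, whence $(W_{J'})^J(q)=(1+q)^{|J'|-|J|}$. Since $|J'|>|J|$ this factor vanishes at $q=-1$, so $W^J(-1)=0$ and $\iota_0(G(W^J))=0$, a contradiction; hence $J$ is inclusion-maximal among subsets generating an abelian subgroup. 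For the converse, $r-|J|$ even gives a single middle layer $M=(r-|J|)/2$, and the pairing $a_k=a_{2M-k}$ identifies layers whose indices share parity, so the paired terms of $W^J(-1)=\sum_k(-1)^k a_k$ contribute even multiples and only the middle term survives modulo $2$: $W^J(-1)\equiv a_M\pmod 2$. If $a_M$ is odd then $W^J(-1)$ is odd, hence nonzero, so $\iota_0(G(W^J))>0$; together with $W_J$ abelian this is cube-likeness with respect to $J$.

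\emph{Main obstacle.} Almost all of the content sits in the opening translation $\iota_0(G(W^J))=|W^J(-1)|$ and in the self-duality $a_k=a_{\,r-|J|-k}$; given these, each item is a one-line parity or counting argument. The step needing the most care is (3), where one must invoke transitivity of the parabolic factorization and correctly identify $(W_{J'})^J(q)=(1+q)^{|J'|-|J|}$. I would also flag explicitly that the converse tacitly assumes $W_J$ abelian — this is needed both for the target conclusion \emph{cube-like} and for the identity $\ell(w_{0,J})=|J|$ that locates the middle layer.
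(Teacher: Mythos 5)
Your proof is correct and follows essentially the same route as the paper's: item (1) is deferred to Proposition~\ref{prop:cubelike}, item (2) and the converse rest on the self-duality of $I(W^J)$ together with its length being $r-|J|$, and item (3) factors $W^J$ through the larger abelian parabolic to force the two parity classes to balance. Your only packaging difference is to run everything through the evaluation $W^J(-1)$ of the length generating function (using $\iota_0(G(W^J))=\bigl|\,|\even(W^J)|-|\odd(W^J)|\,\bigr|$ via connectivity, which the paper uses implicitly) and to replace the paper's decomposition $G(W^J)\simeq G(W^{J'})\square Q_{|J'\setminus J|}$ by the equivalent identity $W^J(q)=(1+q)^{|J'|-|J|}W^{J'}(q)$; your explicit flag that the converse presupposes $W_J$ abelian is a fair observation about the statement, not a gap in the argument.
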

\begin{proof}

\noindent 1. This is proved implicitly in Proposition~\ref{prop:cubelike}.

\noindent 2. The length of any shortest path from the minimum to the maximum of $\mathcal{L}_W$ is $r$ and for any parabolic subgroup $W_J$, such path can be obtained by first going from the minimum to the maximum of $\mathcal{L}_{W_J}$ and then traversing a translate of the interval $I(W^J)$. Since the diameter of the cube generated by $J$ is $|J|$ we get that $r-|J|$ is the length of $I(W^J)$.

Now, we use the fact 2., that order reversing is an automorphism of $I(W^J)$. If the length of $I(W^J)$ is odd, then this automorphism identifies layers of different parity, hence both parts of a bipartition of $G^J$ will be of the same size and $\iota_0(G^J)=0$.
 
\noindent 3. Whenever $W_J$ is abelian for some $J \subseteq S$, then $J$ corresponds to an independent set in the Coxeter-Dynkin diagram of $W$. When $(W,S)$ is cube-like with respect to $J \subseteq S$, then $J$ corresponds to a maximal independent set. Indeed, if this is not the case, there exists $J \subsetneq J' \subseteq S$ such that $W_{J'}$ is abelian. As a consequence, $G(W_{J'}) = G(W_J) \square G(W_{J' \setminus J})$ and $G(W_{J' \setminus J}) = Q_{|J' \setminus J| },$ a hypercube of dimension $|J' \setminus J| \geq 1$. Hence,  $G(W^{J}) \simeq G(W^{J'}) \square Q_{|J' \setminus J|}$; but this implies that  $\iota_0(G(W^J)) = 0$, a contradiction. 
 
 \medskip
 
 For the sufficient condition, if we have an odd number of layers such that by fact 2. opposite ones are of the same size the bipartition class not containing the middle layer is even, but the one containing the middle layer will be odd. Hence $\iota_0(G^J)>0$.
\end{proof}

From Proposition~\ref{prop:necessary} together with Table~\ref{tab:K} we can infer that the the following Coxeter groups are not cube-like with respect to any $J \subseteq S$:
{$\mathbf{B_{2(n^2+1)}}, \mathbf{B_{2(n+1)^2+1}}$ for $n$ even, $\mathbf{D_{8n^2}}, \mathbf{D_{8n^2+1}}$ for $n \geq 1$, $\mathbf{I_2}(n)$ for $n=0 \mod 2$ and $\mathbf{E_6}$. Indeed, in all these groups there are no $J \subseteq S$ satisfying the necessary conditions of Proposition \ref{prop:necessary}. We will show next that Coxeter groups of type $\mathbf{A_{n}}$ and $\mathbf{I_2}(2k+1)$ are cube-like.
As a consequence any Cayley graph $G$ of them or their products satisfies $\sigma(G)=\lceil\sqrt{\kappa(G)}\rceil$.

The Coxeter system $\mathbf{I_2}(n)$ is $(D_n,\{b,c\})$, where both $b$ and $c$ are generators of order $2$.

\begin{theorem}\label{thm:I_2(2k+1)}
 For any $k\geq 0$ the Coxeter group $\mathbf{I_2}(2k+1)$ is cube-like.
\end{theorem}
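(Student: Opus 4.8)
The plan is to certify cube-likeness through the single-generator parabolic subgroup, after identifying the relevant quotient graph explicitly and then applying either a direct $\iota_0$-computation or the sufficient criterion of Proposition~\ref{prop:necessary}. Recall from Table~\ref{tab:K} that $\mathbf{I_2}(2k+1)=(D_{2k+1},\{b,c\})$ has $\kappa=1$ and $r=2k+1$ reflections, and that $|D_{2k+1}|=2(2k+1)$. I would choose $J=\{b\}$ (the choice $\{c\}$ being symmetric), so that $W_J=\{e,b\}\cong\mathbb{Z}_2$ is abelian and minimally generated by $J$; thus $\Cay(W_J)$ is a copy of $Q_1$ and $|J|=1=\kappa$, which already matches condition (1) of Proposition~\ref{prop:necessary}.

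The crux is to determine $G(W^J)$. By facts (1)--(2) in the list preceding Lemma~\ref{lem:intervals}, $W^J$ spans an interval $I(W^J)$ of $\mathcal{L}_W$, and by the length computation in the proof of Proposition~\ref{prop:necessary}(2) this interval has length $r-|J|=2k$. On the other hand it has $|W^J|=|W|/|W_J|=2k+1$ elements. Since $\mathcal{L}_W$ is graded by the Coxeter length, any interval of length $2k$ contains a maximal chain on $2k+1$ elements and hence at least $2k+1$ elements, with equality precisely when the interval is totally ordered. As our interval meets this bound exactly, $I(W^J)$ is a chain, and therefore $G(W^J)$ is the path $P_{2k+1}$ on $2k+1$ vertices. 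Concretely, this reflects the fact that $\Cay(\mathbf{I_2}(2k+1))=C_{2(2k+1)}$ embeds as a spanning subgraph of the ladder $\Cay(W_J)\,\square\,P_{2k+1}$, in accordance with Lemma~\ref{lem:intervals}.

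Once the quotient is identified, the conclusion is immediate. The proper $2$-colouring of the path $P_{2k+1}$ partitions its vertices into two independent sets of sizes $k+1$ and $k$, both of maximum induced degree $0$; hence $\iota_0(G(W^J))\ge (k+1)-k=1>0$, which is exactly the defining property of being cube-like with respect to $J$. Equivalently, one may invoke the sufficient half of Proposition~\ref{prop:necessary}: here $r-|J|=2k$ is even and the middle layer (layer $k$) of the chain $I(W^J)$ is a single vertex, hence of odd size.

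I expect the only delicate point to be the identification of $G(W^J)$ as the path $P_{2k+1}$; the rank-counting argument above makes this robust and avoids any case analysis on reduced words or left/right conventions. Everything downstream is a one-line computation of $\iota_0$. Finally, cube-likeness feeds into Proposition~\ref{prop:cubelike} to yield the sharp equality $\sigma(\Cay(\mathbf{I_2}(2k+1)))=\lceil\sqrt{\kappa}\rceil=1$, and closure of cube-like systems under direct products then propagates the result to all products involving types $\mathbf{I_2}(2k+1)$ and $\mathbf{A_n}$, as announced in the surrounding text.
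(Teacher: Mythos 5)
Your proposal is correct and follows essentially the same route as the paper: take the single-generator abelian parabolic $W_J$, observe that the quotient interval $I(W^J)$ is a chain of length $2k$ (so $G(W^J)=P_{2k+1}$ with $\iota_0>0$, equivalently an odd middle layer), and invoke the sufficient criterion of Proposition~\ref{prop:necessary}. Your rank-versus-cardinality argument identifying the interval as a chain is a slightly more explicit justification of a step the paper states directly, but the proof is the same.
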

\begin{proof}
 The graph $\Cay(\mathbf{I_2}(2k+1))$ is a cycle of length $4k+2$. The maximal abelian parabolic subgroup is generated by a single element $j$, i.e., the maximal cube is an edge. The corresponding quotient $I(W^j)$ is an interval consisting of a single chain of length $2k$. In particular the middle layer is odd and $\iota_0(G^j)>1$, by Proposition~\ref{prop:necessary}.
\end{proof}

The symmetric group $S_{n+1}$ with generators $S = \{(1 2), \, (2 3),\ldots, (n (n+1))\}$ constitutes the Coxeter system of type $\mathbf{A_n}$.  As an example consider $\mathbf{A_3}$. Its illustration in the left of Figure~\ref{fig:B3} shows that this Coxeter group is cube-like, even though it does not satisfy the sufficient condition in Proposition~\ref{prop:necessary}. This exemplifies the construction shown below. 





\begin{theorem}\label{thm:Ancubelike}
 For all $n\geq 0$ the Coxeter system $\mathbf{A_{n}}$ is cube-like with respect to a set $J$ such that  $\iota_0(G(\mathbf{A_{n}}^J))\geq\lceil\frac{n}{2}\rceil!$.
\end{theorem}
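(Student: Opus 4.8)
The plan is to exhibit an explicit maximal abelian parabolic subgroup $W_J$ and then to compute $\iota_0(G(\mathbf{A_n}^J))$ through the signed length enumeration of the quotient $W^J$, which is exactly the Poincar\'e polynomial $W^J(q)=\sum_{w\in W^J}q^{\ell(w)}$ evaluated at $q=-1$. First I would take $J=\{s_1,s_3,s_5,\ldots\}=\{s_{2i-1}\mid 1\le i\le\lceil n/2\rceil\}$, the odd-indexed adjacent transpositions. These are pairwise non-adjacent vertices of the path-shaped Coxeter--Dynkin diagram of $\mathbf{A_n}$, hence pairwise commuting involutions, so $W_J\cong(\mathbb{Z}/2\mathbb{Z})^m$ is abelian with $m=|J|=\lceil n/2\rceil$; by Table~\ref{tab:K} this matches $\kappa(\Cay(\mathbf{A_n}))=\lceil n/2\rceil$, so $J$ is a maximal independent set as required by Proposition~\ref{prop:necessary}. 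It then remains to show $\iota_0(G(W^J))\ge m!$.

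Next I would pass to Poincar\'e polynomials. From the length-additive parabolic decomposition underlying facts (3) and (5) (each $w$ factors along $W_J$ with additive length), one gets the standard factorization $W(q)=W_J(q)\,W^J(q)$. For $W=S_{n+1}$ one has $W(q)=[n+1]_q!=\prod_{k=1}^{n+1}(1+q+\cdots+q^{k-1})$, and since the elements of $W_J$ are the subsets of $J$ with length equal to their size, $W_J(q)=(1+q)^m$. Hence $W^J(q)=[n+1]_q!/(1+q)^m$, and the quantity I want is $W^J(-1)$.

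The third step is the evaluation at $q=-1$, obtained by matching the zeros of numerator and denominator. Each even factor splits as $[2j]_q=(1+q)\sum_{i=0}^{j-1}q^{2i}$, while each odd factor is nonzero at $q=-1$. The even indices occurring in $[n+1]_q!$ are precisely $k=2,4,\ldots,2m$ (because $2m\le n+1<2m+2$ for $m=\lceil n/2\rceil$), so they contribute exactly $m$ factors of $(1+q)$, cancelling the denominator. After cancellation I evaluate at $q=-1$: every odd factor gives $[2j-1]_{-1}=1$ and every residual even factor gives $\sum_{i=0}^{j-1}(-1)^{2i}=j$, so $W^J(-1)=\prod_{j=1}^{m}j=m!$.

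Finally I would interpret this combinatorially. Since $W^J(-1)=\sum_{w\in W^J}(-1)^{\ell(w)}=|\even(W^J)|-|\odd(W^J)|=m!>0$, and $G(W^J)$ is a subgraph of the bipartite graph $\Cay(W)$ whose bipartition is given by length parity, the partition $V(G(W^J))=\even(W^J)\sqcup\odd(W^J)$ is into two independent sets with $|\even|-|\odd|=m!$. Thus $\iota_0(G(W^J))\ge m!=\lceil n/2\rceil!>0$, so $(\mathbf{A_n},S)$ is cube-like with respect to $J$. The one delicate point I expect is the bookkeeping in the third step, namely verifying that the order of the zero of $[n+1]_q!$ at $q=-1$ equals $m$ exactly, so that the quotient is finite and nonzero; the rest is formal. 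Note that for the stated lower bound I need neither connectedness of $G(W^J)$ nor optimality of the parity bipartition: simply exhibiting it suffices.
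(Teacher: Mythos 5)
Your proof is correct, and it takes a genuinely different route from the paper's. You choose the same set $J=\{s_1,s_3,\ldots\}$, but where the paper argues combinatorially --- viewing $W^J$ as the linear extensions of a poset of disjoint $2$-chains, grouping them by ``arc-diagrams'' (perfect matchings of $K_n$), and exploiting that the number of such matchings is odd to find a single diagram whose $\lceil n/2\rceil!$ extensions all land in the majority parity class --- you instead compute the signed length enumerator directly: the length-additive parabolic factorization gives $W^J(q)=[n+1]_q!/(1+q)^{m}$ with $m=\lceil n/2\rceil$, and the bookkeeping of the $(1+q)$-factors (exactly $m$ even indices among $2,\ldots,n+1$) yields $W^J(-1)=m!$, which is precisely $|\even(W^J)|-|\odd(W^J)|$ and hence a lower bound on $\iota_0(G(W^J))$ since the parity classes are independent sets. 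Your argument is shorter, avoids the case distinction on the parity of $n$ and the unproven ``we skip this case distinction'' step in the paper, and in fact computes the exact parity imbalance of $W^J$ rather than just exhibiting a witness; it also generalizes mechanically to any Coxeter type where the Poincar\'e polynomial is known, which could be useful toward Conjecture~\ref{conjecture}. What it does not give is the explicit description of a large induced structure that the paper's arc-diagram argument provides, but for the stated inequality that is not needed.
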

\begin{proof}
 We set $J=\{(12),(34), \ldots, (nn+1)\}$ is $n$ is odd and $J=\{(12),(34), \ldots, (n-1n)\}$ otherwise. Clearly the parabolic subgroup generated by $J$ is abelian.  For the proof we identify the permutations with strings of length $n+1$ in the standard way, e.g., $e=[1,2,3,\ldots ,n+1]$. For a permutation $\pi$, its length $\ell(\pi)$ equals the number of pairs that are ordered differently from $[1,2,\ldots, n+1]$. 
 We refer to the two bipartition classes of $\Cay(\mathbf{A_n})$ as \emph{even} and \emph{odd} and correspondingly denote the \emph{parity} of a permutation $\pi$ by $p(\pi) \in \{0,1\}$.
 Let $P^J$ be the poset on $\{1,\ldots, n+1\}$ whose relations are of the form $(i\prec j)$ if $(ij)\in J$.  Now $W^J$ can be seen as the set of \emph{linear extensions} of $P$, i.e., all linear orders on $\{1,\ldots, n+1\}$ that respect the relations prescribed by $P$. 
 
 Let us first consider the case $n$ even. In this setting $P$ consists of the single element $\{n+1\}$ and a disjoint union of chains $1\prec 2, \ldots, n-1\prec n$ called $M$. We label these chains $C_1, \ldots, C_{\frac{n}{2}}$. For the sake of the proof we say that an \emph{arc-diagram} $D$ is a perfect matching of $K_{n}$. Any linear extension $L_M$ of $M$ corresponds to an arc diagram, where each edge is labeled with a chain among $C_1, \ldots, C_{\frac{n}{2}}$.  More precisely, a linear extension of $M$ can be seen as a permutation $\pi$ of $\{1,\ldots,n\}$ such that $i < j$ whenever $\pi(i) \prec \pi(j)$; then the linear extension corresponds to the arc-diagram $D$ with edges  $e_j = (\pi(2j-1),\pi(2j))$ for $1 \leq j \leq n/2$, and the edge $e_j$ is labeled by $C_j$.
  Thus, given one arc diagram $D$ there are $\frac{n}{2}!$ linear extensions with this diagram. Moreover, all of them have the same parity $p(D)$. To see the latter it is sufficient to distinguish how two arcs intersect whose assigned chains are exchanged. We skip this case distinction.
  
Now, there are $n+1$ possible ways to insert $\{n+1\}$ into a given linear extension of $M$ with diagram $D$. Note that $\lceil\frac{n+1}{2}\rceil$ of these have parity $p(D)$ and $\lfloor\frac{n+1}{2}\rfloor$ of these have parity $(p(D)+1) \mod 2$.

Since the number of arc-diagrams, i.e., the number of perfect matchings of $K_{n}$ is odd, for some $p\in\{0,1\}$ there is  one more arc-diagram of parity $p$ than there are of parity $(p+1) \mod 2$. So, take a diagram $D$ of parity $p$. It corresponds to $\lceil\frac{n+1}{2}\rceil\frac{n}{2}!$ linear extensions of parity $p$ and $\lfloor\frac{n+1}{2}\rfloor\frac{n}{2}!$ linear extensions of parity $p+1 \mod 2$. We thus have $\iota_0(G^J)\geq\frac{n}{2}!$.

In the case that $n$ is odd, the same proof works except that $P$ is entirely partitioned into chains of length $2$. The analogous analysis yields $\iota_0(G^J)\geq\frac{n+1}{2}!$. \end{proof}

The results of the present section can be applied to the sensitivity of some Coxeter groups:
\begin{corollary}\label{cor:An}
 Let $G$ be the $n$-vertex Cayley graph of the product $$\mathbf{I_2}(2k_1+1)\times\ldots\times \mathbf{I_2}(2k_i+1)\times \mathbf{A(n_1)}\times\ldots\times \mathbf{A(n_j)}.$$ Then $\sigma(G)=\lceil\sqrt{\kappa(G)}\rceil$ and there exists a set of $\frac{n}{2}+\Pi_{\ell=1}^j(\lceil\frac{n_{\ell}}{2}\rceil!)$ vertices inducing this degree.
\end{corollary}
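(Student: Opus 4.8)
The plan is to derive the statement directly from the structural results already established for cube-like Coxeter systems, so that no new computation on the product itself is required. Write $W=\mathbf{I_2}(2k_1+1)\times\cdots\times \mathbf{I_2}(2k_i+1)\times \mathbf{A_{n_1}}\times\cdots\times \mathbf{A_{n_j}}$, so that $G=\Cay(W)$ has $n=|W|$ vertices. The key observation is that once we know $W$ is cube-like with respect to some $J$, Proposition~\ref{prop:cubelike} immediately yields both $\sigma(G)=\lceil\sqrt{\kappa(G)}\rceil$ and an induced subgraph of maximum degree $\sigma(G)$ on $\frac{|W|}{2}+\iota_0(G(W^J))$ vertices. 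Thus everything reduces to (i) establishing that $W$ is cube-like and (ii) bounding $\iota_0(G(W^J))$ from below by $\prod_{\ell=1}^{j}\lceil\frac{n_\ell}{2}\rceil!$.

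For (i) and (ii) I would proceed factor by factor. By Theorem~\ref{thm:I_2(2k+1)} each factor $\mathbf{I_2}(2k_\ell+1)$ is cube-like, its quotient contributing an imbalance $\iota_0\geq 1$, and by Theorem~\ref{thm:Ancubelike} each factor $\mathbf{A_{n_\ell}}$ is cube-like with a set $J_\ell$ satisfying $\iota_0(G(\mathbf{A_{n_\ell}}^{J_\ell}))\geq\lceil\frac{n_\ell}{2}\rceil!$. I would then invoke the proposition stating that cube-like Coxeter systems are closed under direct products: applied repeatedly (by induction on the number of factors), it shows that $W$ is cube-like with respect to the set $J$ obtained by placing the individual $J_\ell$ in their respective factors, and that the quotient graph $G(W^J)$ is the Cartesian product of the individual quotient graphs. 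The multiplicative bound in that proposition, which rests on Lemma~\ref{lem:prodimb}, then gives
\[
\iota_0(G(W^J))\ \geq\ \prod_{\text{$\mathbf{I_2}$ factors}}1\ \cdot\ \prod_{\ell=1}^{j}\left\lceil\tfrac{n_\ell}{2}\right\rceil!\ =\ \prod_{\ell=1}^{j}\left\lceil\tfrac{n_\ell}{2}\right\rceil!,
\]
where the factors $1$ coming from the $\mathbf{I_2}(2k_\ell+1)$ blocks multiply in harmlessly.

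Finally, feeding this back into Proposition~\ref{prop:cubelike} gives $\sigma(G)=\lceil\sqrt{\kappa(G)}\rceil$ together with an induced subgraph of maximum degree $\sigma(G)$ on at least $\frac{n}{2}+\iota_0(G(W^J))\geq\frac{n}{2}+\prod_{\ell=1}^{j}\lceil\frac{n_\ell}{2}\rceil!$ vertices, as claimed (recall $n=|W|$). I do not expect a genuine obstacle here, as the argument is essentially bookkeeping built on the three cited results. The only points requiring a little care are upgrading the two-factor product proposition to arbitrarily many factors by a routine induction, and checking that the $\mathbf{I_2}(2k_\ell+1)$ blocks, whose quotient imbalance is only guaranteed to be at least $1$, enter the product without deflating the claimed bound.
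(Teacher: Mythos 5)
Your argument is exactly the derivation the paper intends: the corollary is stated as an immediate consequence of Theorem~\ref{thm:I_2(2k+1)}, Theorem~\ref{thm:Ancubelike}, the closure of cube-like systems under products (with the multiplicative bound on $\iota_0$ from Lemma~\ref{lem:prodimb}), and Proposition~\ref{prop:cubelike}, and you assemble these in the same way, correctly noting that each $\mathbf{I_2}(2k_\ell+1)$ quotient contributes imbalance $1$. No gaps.
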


\bigskip

We proceed to study $\sigma$ for Coxeter groups, where we cannot apply the above strategy.

\begin{theorem}\label{th:small}
 Let $G$ be the Cayley graph of a Coxeter group of type $\mathbf{I_2}(n)$ or $\mathbf{I_2}(n)\times\mathbf{I_2}(n')$. Then $\sigma(G)=\lceil\sqrt{\kappa(G)}\rceil$.
\end{theorem}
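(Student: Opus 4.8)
The plan is to handle the two cases separately, using the earlier machinery. For the single factor $\mathbf{I_2}(n)$, recall that $\kappa(\Cay(\mathbf{I_2}(n)))=1$ by Table~\ref{tab:K}, so the target is $\sigma(G)=\lceil\sqrt{1}\rceil=1$; by Proposition~\ref{pr:deltasubgraph} we already have $\sigma(G)\geq 1$, so the content is the upper bound $\sigma(G)\leq 1$, i.e., producing an induced subgraph of maximum degree $1$ on more than half the vertices. The graph $\Cay(\mathbf{I_2}(n))$ is simply an even cycle $C_{2n}$, and for the odd case $n=2k+1$ Theorem~\ref{thm:I_2(2k+1)} together with Proposition~\ref{prop:cubelike} already gives the conclusion. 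So the genuinely new part here is $n$ even: I would exhibit directly on the cycle $C_{2n}$ an induced matching on $n+1>\frac{2n}{2}$ vertices, which is elementary (select consecutive pairs of vertices leaving gaps, e.g.\ indices $\{0,1,4,5,8,9,\ldots\}$ arranged so the chosen vertices break into disjoint edges), and verify the count gives strictly more than half.

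For the product $\mathbf{I_2}(n)\times\mathbf{I_2}(n')$, again $\kappa=1$ since each factor contributes $\kappa=1$ and the largest independent set in the Coxeter-Dynkin diagram of the product has size $\lceil\frac{\kappa}{?}\rceil$—here the two factors each give one generator, but the product's maximal abelian parabolic is generated by a single element (no two of the four generators commute except trivially), so $\kappa(G)=1$ and the goal is once more $\sigma(G)=1$. The natural approach is to use Lemma~\ref{lem:prodimb}: since the Cayley graph of the product is $\Cay(\mathbf{I_2}(n))\,\square\,\Cay(\mathbf{I_2}(n'))$, I would compute $\iota_0$ of each factor and invoke $\iota_0(G)\iota_0(H)\leq\iota_0(G\square H)$ to conclude $\iota_0(G)>0$, which forces $\sigma(G)\leq\min\{k\mid\iota_0(G)>0\}=1$ (combined with the lower bound $\sigma(G)\geq\lceil\sqrt{\kappa(G)}\rceil=1$). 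For this I must establish $\iota_0(\Cay(\mathbf{I_2}(n)))>0$ for \emph{every} $n$, including even $n$, i.e.\ that each even cycle $C_{2n}$ admits a balanced-in-the-right-direction partition into two induced matchings with strictly unequal part sizes. The odd-length cycle analogue fails, but $C_{2n}$ always has $\iota_0=2$, which the first part of the argument supplies.

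The main obstacle will be the even case $\mathbf{I_2}(n)$ itself and making the product argument clean. Proposition~\ref{prop:necessary} tells us $\mathbf{I_2}(n)$ is \emph{not} cube-like for even $n$ (the length of $I(W^j)$ is $n-1$, odd, so the middle layer argument breaks and indeed $\iota_0$ of the quotient vanishes), so I cannot route the even case through Proposition~\ref{prop:cubelike}; the cube-like framework only settles odd $n$. Therefore the heart of the matter is the bare-hands verification that $\iota_0(C_{2n})>0$ for all $n\geq 2$—concretely that an even cycle can be split into two induced matchings of different sizes—and then feeding this into Lemma~\ref{lem:prodimb} for the product. I would first pin down $\iota_0(C_{2n})$ exactly (I expect it equals $2$, achieved by taking a near-perfect induced matching on $n+1$ vertices and its complement on $n-1$), then the product case is an immediate consequence via the product lemma and Proposition~\ref{pr:deltasubgraph}, with no residual case analysis on the parities of $n,n'$.
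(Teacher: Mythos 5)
There are two genuine errors here, and the first makes the conclusion you set out to prove for the product case false. You assert that $\kappa(\Cay(\mathbf{I_2}(n)\times\mathbf{I_2}(n')))=1$ because ``no two of the four generators commute except trivially'', but generators taken from \emph{different} direct factors always commute, so the maximal abelian parabolic subgroup has rank at least $2$ (equivalently, the Coxeter--Dynkin diagram of the product is a disjoint union of two edges, whose largest independent set has size $2$). Concretely, $\Cay(\mathbf{I_2}(n)\times\mathbf{I_2}(n'))=C_{2n}\square C_{2n'}$ visibly contains $4$-cycles, so $\kappa\geq 2$ and Proposition~\ref{pr:deltasubgraph} forces $\sigma\geq 2$; the target in the product case is therefore $\sigma=2=\lceil\sqrt{\kappa}\rceil$, and $\sigma=1$ is impossible. (The same slip occurs in the single-factor case at $n=2$, where $\Cay(\mathbf{I_2}(2))=Q_2$ has $\kappa=\sigma=2$.) The second error is the claim $\iota_0(C_{2n})=2$: by definition $\iota_0$ ranges over partitions of $V$ into two sets each inducing maximum degree $\leq 0$, i.e.\ two \emph{independent} sets, and a connected bipartite graph has a unique such partition, into classes of equal size. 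Hence $\iota_0(C_{2n})=0$ for every $n$, and feeding this into Lemma~\ref{lem:prodimb} yields nothing. What your matching construction actually establishes is $\iota_1(C_{2n})\geq 2$ for $n\geq 3$.

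With both points corrected your strategy can be salvaged and becomes a genuinely different route from the paper's in the main case: for $n,n'\geq 3$, Lemma~\ref{lem:prodimb} gives $\iota_2(C_{2n}\square C_{2n'})\geq\iota_1(C_{2n})\,\iota_1(C_{2n'})\geq 4>0$, hence $\sigma\leq 2$, matching the lower bound -- arguably cleaner than the paper's hands-on construction, which builds a degree-$2$ set in $C_i\square C_j$ from a $3$-coloring of one factor. But the product-lemma route still leaves holes that the paper's argument covers: since $\iota_1(C_4)=0$, the factorization argument fails whenever one factor is $\mathbf{I_2}(2)$, so $Q_2\square C_{2n'}$ and $Q_4$ need separate treatment. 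The single-factor analysis for even $n\geq 3$ (an induced matching on $n+1$ vertices of $C_{2n}$) is correct as you describe it.
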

\begin{proof}
The Cayley graph of ${\mathbf I_2}(2)$ is a square and, then, $\kappa( {\mathbf I_2}(2))=2$ and $\sigma( {\mathbf I_2}(2)) = 2 = \lceil\sqrt{2}\rceil$.

Let us see that also the product of two even cycles $C_i\square C_j$ has a subgraph on more than half the vertices with max degree at most $2$. If $i = j = 4$, one can take the subgraph consisting of an induced $8$-cycle and a vertex without neighbors in this cycle. So assume that $i > 4$. Take a proper $3$-coloring of $C_j$ with $a,b,x$, such that $x$ is used at least once and such that the neighbors of every vertex colored $x$ are colored differently. Now, in $C_i\square C_j$ every copy of $C_i$ has color $a,b$ or $x$. In every copy of $C_i$ colored with $x$, we always pick the same subgraph of maximum degree $1$ and with more than $i/2$ vertices (we can do this because $i > 4$). In the other copies of $C_i$ we choose one of the two bipartition classes of $C_i$, depending on whether its color is $a$ of $b$. The resulting subgraph has more than half of the vertices and maximum degree $2$. 

Thus, Coxeter groups of the form $\mathbf{I_2}(n)\times \mathbf{I_2}(n')$ are fine, too.
\end{proof}

\begin{center}
\begin{table}[h]
\begin{tabular}{c|r|r|c} 
 
 group&order&$\kappa$&subgraph of degree $\leq 2$ \\ 
 \hline
 $\mathbf{F_4}$&$1152$&$2$&$768$ \\ 
 $\mathbf{H_3}$&$120$&$2$&$85$ \\ 
 $\mathbf{H_4}$&$14400$&$2$&$8624\leq\cdot\leq 9599$\\ 
 $\mathbf{E_6}$&$51840$&$3$&$25926\leq\cdot$\\ 
 $\mathbf{D_4}$&$192$&$3$&$120\leq\cdot\leq 122$\\ 
 $\mathbf{D_5}$&$1920$&$3$&$1004\leq\cdot\leq 1199$\\ 
 $\mathbf{B_3}$&$48$&$2$&$34$ \\ 
 $\mathbf{B_4}$&$384$&$2$&$235\leq\cdot\leq 252$\\ 
 $\mathbf{B_5}$&$3840$&$3$&$1976\leq\cdot\leq 2398$\\ 
 $\mathbf{B_3}\times \mathbf{I_2}(2)$&$192$&$4$&$98\leq\cdot\leq 115$\\
 $\mathbf{B_3}\times \mathbf{I_2}(3)$&$288$&$3$&$150\leq\cdot\leq 175$\\
 $\mathbf{B_3}\times \mathbf{I_2}(4)$&$384$&$3$&$200\leq\cdot\leq 235$\\
  $\mathbf{I_2}(2)\times \mathbf{I_2}(3)\times \mathbf{I_2}(3)$&$144$&$3$&$73\leq\cdot\leq 79$\\
 $\mathbf{I_2}(1)\times \mathbf{I_2}(2)\times \mathbf{I_2}(4)$&$64$&$4$&$33$\\
 $\mathbf{I_2}(1)\times \mathbf{I_2}(3)\times \mathbf{I_2}(4)$&$96$&$3$&$52$\\
\end{tabular}
\caption{Largest subgraphs of maximum degree $\lceil\sqrt{\kappa}\rceil$ in small Coxeter groups.}\label{tab:Q}
\end{table}
\end{center}

%
%

The group $\mathbf{B_n}$ equals the wreath product $\mathbb{Z}_2 \wr_{\{1,\ldots,n\}} S_{n}$ . Equivalently, this group can be seen as the group with elements $2^{[n]} \times S_{n}$, where $2^{[n]}$ denotes the power set of $\{1,\ldots,n\}$, with operation $(A, \pi) \cdot (B, \tau) = (A\, \triangle\, \pi^{-1}(B), \pi \cdot \tau)$ being $\triangle$ the symmetric difference of the two sets,  and generators $S = \{a_1,\ldots,a_{n}\}$ with $a_i = (\emptyset, (i\ i+1))$ for all $i \in \{1,\ldots,n-1\}$ and $a_n = (\{1\},id)$. 

The group $\mathbf{D_n}$ is a subgroup of $\mathbf{B_n}$ of index $2$; it can be seen as the group with elements $E^{[n]} \times S_{n}$, where $E^{[n]}$ denotes the elements in $2^{[n]}$ with an even number of elements, and generators $S = \{a_1,\ldots,a_{n-1},a_n'\}$ with $a_i = (\emptyset, (i \ i+1))$ for all $i \in \{1,\ldots,n-1\}$ and $a_n' = (\{1,2\},(12))$ and . 

\begin{theorem}\label{th:coxeter}

Let $G$ be a Cayley graph of $\mathbf{B_n}$ or $\mathbf{D_n}$. Then $\sigma(G)\leq\lceil\sqrt{\kappa(G)}\rceil+1$.

%
\end{theorem}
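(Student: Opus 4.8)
The plan is to combine the product inequality of Lemma~\ref{lem:intervals} with the tightness result of Chung, F\"uredi, Graham, and Seymour (Theorem~\ref{thm:CFGS88}), spending the extra ``$+1$'' entirely on the quotient factor. Concretely, let $J\subseteq S$ be a maximum subset generating an abelian parabolic subgroup. Since maximal abelian parabolics correspond to maximum independent sets of the Coxeter--Dynkin diagram, we have $|J|=\kappa$ with $\kappa:=\kappa(\Cay(W))$, and $\Cay(W_J)\cong Q_{\kappa}$ because $W_J\cong\mathbb{Z}_2^{\kappa}$ with its standard generators. By Theorem~\ref{thm:CFGS88}, $\iota_{\lceil\sqrt{\kappa}\rceil}(Q_{\kappa})\geq 2$, so Lemma~\ref{lem:intervals} gives
\[\iota_{\lceil\sqrt{\kappa}\rceil+1}(\Cay(W))\ \geq\ \iota_{\lceil\sqrt{\kappa}\rceil}(\Cay(W_J))\,\iota_{1}(G(W^J))\ \geq\ 2\,\iota_{1}(G(W^J)).\]
As $\Cay(W)$ is regular and bipartite, it therefore suffices to prove $\iota_{1}(G(W^J))>0$, since this yields $\sigma(\Cay(W))\leq\lceil\sqrt{\kappa}\rceil+1$.

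Next I would analyse $G(W^J)$, which by the facts (1)--(5) is the cover graph of the graded interval $I(W^J)$ of the weak order, of length $r-|J|$ and isomorphic to its own order reversal. Splitting the vertices by the parity of their rank yields a bipartition into two independent sets whose difference in size is $\iota_0(G(W^J))$; when this alternating sum of layer sizes is nonzero, then $(W,S)$ is cube-like and even $\sigma(\Cay(W))=\lceil\sqrt{\kappa}\rceil$ by Proposition~\ref{prop:cubelike}, so nothing remains. The only problematic case is the balanced one, which by self-duality can occur only when $r-|J|$ is odd or the middle layer has even size (cf.\ Proposition~\ref{prop:necessary}); there $|V(G(W^J))|=|W|/2^{\kappa}$ is even, and I must instead exhibit a partition of $G(W^J)$ into two parts of maximum induced degree at most $1$ whose sizes differ by at least~$2$.

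To produce such a partition I would start from the rank-parity bipartition and perform one local modification: move a set $T$ of odd-rank vertices to the even side, gaining imbalance $2|T|$. This preserves the degree bound exactly when $T$ is independent, every vertex of $T$ has a unique neighbour, and these neighbours are pairwise distinct, i.e.\ when $T$ together with its neighbourhood induces a matching. To locate such a $T$ I would use the explicit combinatorial model of the quotient of $\mathbf{B_n}$ (respectively $\mathbf{D_n}$) by the maximal abelian parabolic, describing its minimal coset representatives as \emph{signed} linear extensions in the spirit of the arc-diagram encoding used for $\mathbf{A_n}$ in the proof of Theorem~\ref{thm:Ancubelike}. The order reversal pins down the structure near the unique top and bottom of $I(W^J)$, and a parity count of these representatives (analogous to the derangement parity of Lemma~\ref{lem:oddderangement}) would certify that a suitable pendant configuration $T$ exists precisely when the layered imbalance $\iota_0(G(W^J))$ vanishes.

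The main obstacle is exactly this last step. The extremal elements $\hat{0},\hat{1}$ of $I(W^J)$ have degree $|S\setminus J|=\lfloor n/2\rfloor$, so for large $n$ no single vertex is a leaf and a naive one-vertex swap raises the induced degree above $1$; controlling the maximum degree of the modified part forces one to understand the cover relations of the weak-order quotient well enough to find an induced-matching attachment $T$. I expect this to be where the $\mathbf{B_n}$/$\mathbf{D_n}$-specific bookkeeping is unavoidable, and where the ``$+1$'' over the cube-like bound is genuinely needed, since the balanced layering shows $\iota_0(G(W^J))=0$ in exactly these cases. Finally, $\mathbf{D_n}$, being an index-two subgroup of $\mathbf{B_n}$ with an analogous forked diagram, is handled by the same scheme after selecting the maximum independent set of its diagram.
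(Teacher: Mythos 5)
Your reduction via Lemma~\ref{lem:intervals} is sound as far as it goes: with $J$ a maximum independent set of the Coxeter--Dynkin diagram one has $\Cay(W_J)\cong Q_{\kappa}$, and Theorem~\ref{thm:CFGS88} plus Lemma~\ref{lem:intervals} indeed give $\iota_{\lceil\sqrt{\kappa}\rceil+1}(\Cay(W))\geq 2\,\iota_{1}(G(W^J))$. But this only relocates the entire difficulty into the claim $\iota_{1}(G(W^J))>0$, which you do not prove. Your proposed mechanism --- starting from the rank-parity bipartition of the graded interval $I(W^J)$ and moving a set $T$ of odd-rank vertices across --- forces every $t\in T$ to have degree at most $1$ in $G(W^J)$ (all neighbours of $t$ lie in even ranks, hence on the receiving side), so it can only work if the quotient graph has pendant or isolated vertices. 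As you yourself observe, $\hat{0}$ and $\hat{1}$ already have degree $\lfloor n/2\rfloor$, and there is no reason for the quotient of $\mathbf{B_n}$ or $\mathbf{D_n}$ by its maximal abelian parabolic to contain any vertex of degree $\leq 1$; the ``parity count of signed linear extensions'' that is supposed to certify the existence of $T$ is never carried out. This is a genuine gap: the argument is a programme whose decisive step is explicitly left open, and the step as formulated appears to fail.

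The paper takes a different and more elementary route that avoids the quotient by the abelian parabolic altogether. It realizes $\mathbf{B_n}$ as $2^{[n]}\times S_n$ with generators $a_1,\dots,a_{n-1}$ acting only on the $S_n$-coordinate and a single generator $a_n$ changing the set-coordinate. The copy $\{\emptyset\}\times S_n$ induces $\Cay(\mathbf{A_{n-1}})$, inside which Corollary~\ref{cor:An} supplies a set $K$ of more than $n!/2$ vertices of induced degree at most $k=\lceil\sqrt{\lceil\frac{n-1}{2}\rceil}\rceil$. Adjoining to $K$ one full bipartition class restricted to $\{(A,\pi)\mid A\neq\emptyset\}$ gives more than half of all vertices; vertices with $A\neq\emptyset$ have all their $a_j$-neighbours ($j<n$) in the other bipartition class and hence induced degree at most $1$, while vertices of $K$ pick up at most one extra neighbour through $a_n$, giving degree at most $k+1\leq\lceil\sqrt{\kappa}\rceil+1$. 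So the ``$+1$'' is spent on the single generator leaving the $\mathbf{A_{n-1}}$-copy, not on the quotient factor of a product decomposition. If you want to salvage your approach, you would need to actually establish $\iota_{1}(G(W^J))>0$ for these quotients, which looks at least as hard as the theorem itself.
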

\begin{proof}We first observe that $\mathbf{B_n}$ has a bipartite Cayley graph, indeed the bipartition classes are $U_1,U_2 \subseteq 2^{[n]} \times S_{n}$ where
$U_1 = \{(A, \pi)  \, \vert \, |A|$ and $\pi$ have different parity$\}$ and $U_2 = \{(A, \pi) \, \vert \, |A|$ and $\pi$ have the same parity$\}$. The induced subgraph with vertices $\{\emptyset\} \times S_{n}$ is isomorphic to $\Cay(\mathbf{A_{n-1}})$ and, by Corollary \ref{cor:An}, it has an induced subgraph $K$ with more than $n!/2$ vertices and maximum degree $k=\lceil\sqrt{\lceil\frac{n-1}{2}\rceil}\rceil$. 

Now we consider 
$K \cup \{(A, \pi) \in U_1 \, \vert \, A \not= \emptyset \}$, which has $ |K| + \frac{1}{2}(|\mathbf{B_n}| - n!) > \frac{1}{2}|\mathbf{B_n}|$ elements and we are going to prove that
 the maximum induced degree is at most $k+1$.
Take  $(B, \tau) \in K \cup \{(A, \pi) \in U_1 \, \vert \, A \not= \emptyset \}$. If $B \neq \emptyset$, then $(B,\tau) \in U_1$ and $(B, \tau) \cdot a_j  \notin U_1$ for all $j \in \{1,\ldots,n-1\}$; thus,  $(B,\tau)$ has degree at most one. If $B = \emptyset$, then $(B, \tau) \cdot a_j \in K$ for at most $k$ values of $j \in \{1,\ldots,n-1\}$ and, hence, its degree is at most $k+1\leq \lceil\sqrt{\lceil\frac{n}{2}\rceil}\rceil+1=\lceil\sqrt{\kappa(\Cay(\mathbf{B_n}))}\rceil+1$. 


A similar proof works for $\mathbf{D_n}$. 
\end{proof}

 We have shown that several Coxeter groups are tight with respect to the lower bound from Proposition~\ref{pr:deltasubgraph}. Also consider Table~\ref{tab:Q} (see also Figure \ref{fig:B3} for $\mathbf{B_3}$) for further results into this direction that were obtained by computer. All the results from Table~\ref{tab:Q} have been obtained by solving a straight-forward integer linear program in CPLEX, except for $\mathbf{E_6}$ where the linear program exceeded the memory of the computer. In this case an exhaustive search through all pairs of $2$-elements sets of $J$ as candidate for $\mathbf{F}$ for Lemma~\ref{lem:bound} gave the result. Note in particular, while in every cube-like Coxeter group the construction from Lemma~\ref{lem:bound} yields a solution via Theorem~\ref{thm:CFGS88}, $\mathbf{E_6}$ is not cube-like by Proposition~\ref{prop:necessary}. This shows the generality of lattices that are cover subposets of cubes, see Remark~\ref{rem:lattice}. For $\mathbf{E_7}$ and $\mathbf{E_8}$ even this exhaustive method was not feasible by computer.
 However, we believe to have gathered sufficient evidence to dare the following:

\begin{conjecture}\label{conjecture}
 Let $G$ be the Cayley graph of a Coxeter group and $Q_d$ the largest subgraph isomorphic to a cube. Then $G$ contains a set $K$ of more than half the vertices, that induced a subgraph of maximum degree $\lceil\sqrt{d}\rceil$, i.e., $\sigma(G)=\lceil\sqrt{\kappa(G)}\rceil$.
\end{conjecture}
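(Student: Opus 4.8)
The plan is to reduce Theorem~\ref{th:coxeter} for $\mathbf{B_n}$ (the $\mathbf{D_n}$ case being analogous) to the already-established result on $\mathbf{A_{n-1}}$. The key structural observation is that $\mathbf{B_n}$ has a natural bipartite Cayley graph, and the ``first coordinate'' of an element, namely the set $A\in 2^{[n]}$, controls almost all of the adjacency behavior: the generators $a_1,\ldots,a_{n-1}$ come from the $S_n$-part and preserve $A$ only when $A=\emptyset$, while the single generator $a_n=(\{1\},\mathrm{id})$ changes $A$. First I would write down the bipartition classes $U_1,U_2$ explicitly in terms of the parities of $|A|$ and $\pi$, so that adjacency via any $a_j$ always flips the class, confirming bipartiteness and hence $\alpha(G)=|\mathbf{B_n}|/2$.

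The core idea is to transplant a good induced subgraph from the parabolic subgroup $\Cay(\mathbf{A_{n-1}})$, which sits inside $\Cay(\mathbf{B_n})$ as the induced subgraph on $\{\emptyset\}\times S_n$. By Corollary~\ref{cor:An}, this copy of $\Cay(\mathbf{A_{n-1}})$ contains an induced subgraph $K$ on strictly more than $n!/2$ vertices with maximum degree $k=\lceil\sqrt{\lceil\frac{n-1}{2}\rceil}\rceil$. The candidate set for $\mathbf{B_n}$ is then $K\cup\{(A,\pi)\in U_1\mid A\neq\emptyset\}$; that is, I keep the clever subgraph on the $A=\emptyset$ slice and then greedily add one entire bipartition class among all elements with $A\neq\emptyset$. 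A short counting step shows this set has $|K|+\tfrac12(|\mathbf{B_n}|-n!)>\tfrac12|\mathbf{B_n}|$ elements, so it is large enough to force a positive lower bound on $\sigma(G)$.

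The degree analysis splits on whether the first coordinate of a vertex $(B,\tau)$ is empty. If $B\neq\emptyset$, then $(B,\tau)\in U_1$ and every generator $a_j$ with $j\le n-1$ fixes the first coordinate but flips the bipartition class, so $(B,\tau)\cdot a_j\in U_2$ and is never in our set; the only possible neighbor inside the set comes from $a_n$, giving degree at most $1$. If $B=\emptyset$, then $(B,\tau)$ lies in the $\Cay(\mathbf{A_{n-1}})$ slice, so at most $k$ of the generators $a_1,\ldots,a_{n-1}$ lead to neighbors inside $K$, and the single generator $a_n$ contributes at most one more neighbor (with $A=\{1\}\neq\emptyset$), yielding degree at most $k+1$. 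Matching this against $\kappa(\Cay(\mathbf{B_n}))=\lceil\frac{n}{2}\rceil$ from Table~\ref{tab:K} gives $k+1\le\lceil\sqrt{\lceil\frac{n}{2}\rceil}\rceil+1=\lceil\sqrt{\kappa(G)}\rceil+1$, as desired.

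The main obstacle is the precise bookkeeping that the extra ``$+1$'' cannot be avoided by this argument: the vertex $a_n$ bridges the $A=\emptyset$ slice to the $A\neq\emptyset$ part, and one must check that adding a whole bipartition class on the $A\neq\emptyset$ side does not inflate the degree beyond $k+1$ anywhere. For the $\mathbf{D_n}$ case I expect the same skeleton to work, the only subtlety being that the first coordinate now ranges over even-cardinality sets $E^{[n]}$ and that the distinguished generator is $a_n'=(\{1,2\},(12))$ rather than $a_n$; I would verify that $a_n'$ still moves a vertex off the $A=\emptyset$ slice while the remaining generators behave as before, so that the identical class-flipping and degree-counting argument goes through.
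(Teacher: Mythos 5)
The statement you were asked to prove is Conjecture~\ref{conjecture}, which asserts the exact equality $\sigma(G)=\lceil\sqrt{\kappa(G)}\rceil$ for the Cayley graph of \emph{every} finite Coxeter group. The paper does not prove this statement --- it is stated explicitly as a conjecture, supported only by the partial results of Section~\ref{sec:Coxeter} and by computer experiments (Table~\ref{tab:Q}); indeed the authors note that for $\mathbf{E_7}$ and $\mathbf{E_8}$ even the computational methods were not feasible. Your proposal does not close this gap: what you reconstruct is essentially the paper's own proof of Theorem~\ref{th:coxeter}, which only yields $\sigma(G)\leq\lceil\sqrt{\kappa(G)}\rceil+1$ for types $\mathbf{B_n}$ and $\mathbf{D_n}$. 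You acknowledge the extra ``$+1$'' yourself and correctly identify why it appears (the vertex with $B=\emptyset$ can pick up one additional neighbor through the generator $a_n$), but you do not remove it, and removing it is precisely what the conjecture demands for these types.

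Beyond the off-by-one issue, the scope of your argument is far narrower than the statement. The conjecture quantifies over all finite Coxeter groups --- the infinite families $\mathbf{A_n}$, $\mathbf{B_n}$, $\mathbf{D_n}$, $\mathbf{I_2}(n)$, the six exceptional groups, and all direct products thereof. Your reduction to the $\mathbf{A_{n-1}}$ parabolic handles only $\mathbf{B_n}$ and (sketchily) $\mathbf{D_n}$; it says nothing about, e.g., $\mathbf{E_6}$ (which the paper shows is \emph{not} cube-like, so the Lemma~\ref{lem:intervals} machinery does not apply there either), nor about mixed products such as $\mathbf{B_3}\times\mathbf{I_2}(4)$ where even the paper only has computational bounds. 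So the proposal establishes a correct but strictly weaker result that already appears in the paper as Theorem~\ref{th:coxeter}, and the conjecture itself remains open.
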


\section{The absence of cubes}

In a sense most of the paper so far has been about Huang's lower bound (Proposition~\ref{pr:deltasubgraph}) being tight, i.e., if a bipartite Cayley graph contains a largest cube $Q_d$, then there is an induced subgraph of maximum degree at most $\lceil\sqrt{d}\rceil$ on more than half the vertices. 

However, we do not want to give the wrong impression that this lower bound is tight in general bipartite Cayley graphs. In this section we provide families of cube-free graphs and with unbounded sensitivity. Observe that if a graph has girth at least $6$, then it does  not contain non-trivial hypercubes, since these have a $4$-cycle.

An $(n,d,\lambda)$-graph is a $d$-regular graph (which might have loops) on $n$ vertices in which all nontrivial (different from $d$) eigenvalues have absolute value at most $\lambda$.

The following Theorem \ref{thm:sensitcover} was provided to us by Noga Alon. For its statement we need the notion of Kronecker double cover introduced at the end of Section~\ref{sec:tight} and for its proof we use Lemma \ref{lm:eml}, which is a direct consequence of the so-called expander mixing lemma (see, e.g.,~\cite{alo-88,alo-16},~\cite[Section 5]{Haemers} or~\cite[Lemma 2.1]{anorak}).
One consequence of Theorem \ref{thm:sensitcover} will be Corollary~\ref{cor:pp}, which slightly improves the  bound we obtained in an earlier version of this paper. 

\begin{lemma}\label{lm:eml} Let $K$ be an $(n,d,\lambda)$-graph and consider $S, T \subseteq V(K)$ with $|S|+|T| = n$. Then, 
\[ e(S,T) \geq (d-\lambda)\frac{|S||T|}{n}, \]
where $e(S, T)$ is the number of (ordered) edges $uv$ with $u \in S, v \in T$.
In particular, there is a vertex of $S$ that has at least $(d-\lambda)\frac{|T|}{n}$ neighbors in $T$.
\end{lemma}
\begin{proof}The expander mixing lemma  asserts that 
\[ \left| e(S,T) - d \frac{|S||T|}{n} \right| \leq \lambda \sqrt{|S||T|\left(1 - \frac{|S|}{n}\right) \left(1 - \frac{|T|}{n} \right)}. \]
 In particular, if $|S|+|T| = n$, this implies that 
\[ e(S,T) \geq (d-\lambda)\frac{|S||T|}{n} \]
and thus there is a vertex of $S$ that has at least $(d-\lambda)\frac{|T|}{n}$ neighbors in $T$.
\end{proof}

\begin{theorem}\label{thm:sensitcover} Let $G$ be the Kronecker double cover of an $(n,d,\lambda)$-graph, then $\sigma(G) > (d-\lambda)/2$.
\end{theorem}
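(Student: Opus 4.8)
The plan is to exploit the spectral structure of the Kronecker double cover and reduce the statement to an expander-mixing argument. Let $A$ be the adjacency matrix of the $(n,d,\lambda)$-graph $G_0$, so that $G=G_0\times K_2$ has adjacency matrix $\begin{pmatrix} 0 & A \\ A & 0\end{pmatrix}$, a bipartite graph on $2n$ vertices with the two color classes being the two copies of $V(G_0)$. Since $G$ is bipartite and $d$-regular, its largest independent set has exactly $n$ vertices, so $\sigma(G)$ is the minimum of $\Delta(G[K])$ over all $K$ with $|K|>n$. I would fix such a $K$ and aim to show it induces a vertex of degree strictly greater than $(d-\lambda)/2$.

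The key step is a counting/averaging argument on the number of edges induced by $K$. Write $K=K_1\sqcup K_2$ where $K_i$ lies in the $i$-th copy of $V(G_0)$, and identify $K_1,K_2$ with subsets $S_1,S_2\subseteq V(G_0)$. The edges of $G$ inside $K$ are precisely the pairs $(u,v)$ with $u\in S_1$, $v\in S_2$, and $uv\in E(G_0)$ (together with the analogous contribution once one accounts for loops). First I would invoke the expander mixing lemma for $(n,d,\lambda)$-graphs, which states that the number $e(S_1,S_2)$ of edges between $S_1$ and $S_2$ satisfies
\[
\left| e(S_1,S_2)-\frac{d\,|S_1|\,|S_2|}{n}\right|\leq \lambda\sqrt{|S_1|\,|S_2|}.
\]
Because $|S_1|+|S_2|=|K|>n$, the main term $\frac{d|S_1||S_2|}{n}$ is bounded below in a way that forces $e(S_1,S_2)$ to be positive and reasonably large. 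Dividing the total number of induced edges by the number of vertices $|K|$ then produces a lower bound on the average induced degree, and hence on $\Delta(G[K])$.

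The main obstacle I expect is getting the arithmetic of the mixing-lemma estimate to yield the clean threshold $(d-\lambda)/2$ rather than some weaker constant. The natural approach is to consider the vertex of maximum degree: if every vertex of $K$ had degree at most $\delta$, then $2\,e_G(K)\le \delta|K|$, while simultaneously $e_G(K)=e(S_1,S_2)\ge \frac{d|S_1||S_2|}{n}-\lambda\sqrt{|S_1|\,|S_2|}$. Since $\sqrt{|S_1||S_2|}\le \tfrac12(|S_1|+|S_2|)=\tfrac12|K|$ by AM--GM, and since $|S_1|+|S_2|>n$ gives $\frac{d|S_1||S_2|}{n}\ge$ roughly $\frac{d}{2}|K|$ near the balanced regime, combining these should yield $\delta|K|\ge (d-\lambda)|K|$ up to the factor of $2$ hidden in counting each induced edge once, delivering $\delta>(d-\lambda)/2$. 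The delicate point is confirming that the extremal case is genuinely the balanced one $|S_1|\approx|S_2|\approx |K|/2$ and that the inequalities are strict; I would handle this by treating $e(S_1,S_2)/|K|$ as a function of the split $|S_1|,|S_2|$ subject to $|S_1|+|S_2|=|K|>n$ and checking that the bound $(d-\lambda)/2$ survives at the boundary of the feasible region. The strictness of the inequality in the conclusion should come from the strictness $|K|>n$.
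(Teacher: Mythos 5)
Your setup---splitting $K$ into $S_1$ in one copy of $V(G_0)$ and $S_2$ in the other and applying the expander mixing lemma to $e(S_1,S_2)$---matches the paper's, but the final step fails. Averaging the induced degree over \emph{all} of $K$ cannot deliver the bound: take $S_1=\{v\}$ and $S_2=V(G_0)$, so $|K|=n+1>n$. Then $e(S_1,S_2)=d$ and the average induced degree is $2d/(n+1)$, which tends to $0$; no inequality of the form $2e(S_1,S_2)\geq\delta|K|$ with $\delta>(d-\lambda)/2$ can hold, even though the conclusion is true here (the single vertex $v$ has induced degree $d$). So the extremal case is emphatically \emph{not} the balanced one, and the "boundary of the feasible region" you propose to check is exactly where the averaging argument collapses. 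The repair, which is what the paper does, is to average only over the \emph{smaller} side: after shrinking so that $|S|+|T|=n$ with $|T|>n/2$, the bound $e(S,T)\geq(d-\lambda)\frac{|S||T|}{n}$ forces some vertex of $S$ to have at least $(d-\lambda)\frac{|T|}{n}>(d-\lambda)/2$ neighbours in $T$, and by the definition of the double cover this is its degree in $G[K]$.

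A second gap is that you quote the mixing lemma in its weak form, with error term $\lambda\sqrt{|S_1||S_2|}$. The argument needs the strong form
\[
\Bigl|e(S,T)-d\tfrac{|S||T|}{n}\Bigr|\leq\lambda\sqrt{|S||T|\bigl(1-\tfrac{|S|}{n}\bigr)\bigl(1-\tfrac{|T|}{n}\bigr)},
\]
because precisely when $|S|+|T|=n$ the two correction factors turn the right-hand side into $\lambda\frac{|S||T|}{n}$, the same shape as the main term, which is what produces the clean coefficient $(d-\lambda)$. With the weaker error term the estimate degrades: even in the balanced case it only yields average degree about $(d-2\lambda)/2$, and when $|S|$ is small the error $\lambda\sqrt{|S||T|}$ swamps the main term $d|S||T|/n$ entirely. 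Both issues must be fixed for the proof to go through.
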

\begin{proof}Take $H$ such that $G = H \times K_2$, i.e., $G$ is the Kronecker double cover of $H$. Take a set $U$ with more than $n$ vertices in $G$. Since $|U| > n$, then $U$ must contain vertices in both
vertex classes of $G$. Take $S,T$ be the nonempty sets of vertices of $H$ corresponding to the vertices of $U$ in each of the two vertex classes of $G$. Without loss of generality we assume $|T| > n/2$. If $|T| = n$, then $T = V(H)$ every vertex in $S$ has its $d$ neighbors in $T$. Otherwise, we consider $S' \subseteq S$ with $|S'| = n - |T|$.  Applying  Lemma \ref{lm:eml} with $S'$ and $T$ we get that there is a vertex of $S'$ that has at least $(d-\lambda)\frac{|T|}{n} > (d-\lambda)/2$ neighbors in $T$. The desired result follows from the definition of double cover.
\end{proof}

Consider the polarity graph of the Desarguesian projective plane $P(2,q)$ (with loops), which is a $(q^2+q+1, q+1, \sqrt{q})$-graph. Note that this graph is not transitive (and, thus, not a Cayley graph) in general. Now, its Kronecker double cover is the \emph{Levi graph}, i.e., point-line incidence graph, of $P(2,q)$, which we denote by $L_q$. It is known that $L_q$ has girth $6$. 
 Moreover, $L_q$ is the Cayley graph of $D_{q^2+q+1}$ with respect to a set of $q+1$ involutions, see~\cite[Theorem 1]{Loz-11}. As a direct consequence of Theorem \ref{thm:sensitcover} we have the following result providing a family of cube-free $(q+1)$-regular Cayley graphs and unbounded sensitivity.

\begin{corollary}\label{cor:pp}
The graph $L_q$ satisfies $\sigma(L_q)> (q+1 - \sqrt{q})/2.$
\end{corollary}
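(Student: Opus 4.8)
The plan is to apply Theorem~\ref{thm:sensitcover} directly to the polarity graph of $P(2,q)$, whose spectral parameters are already recorded just before the statement. First I would invoke the stated fact that the polarity graph (with loops) is an $(n,d,\lambda)$-graph with $n = q^2+q+1$, $d = q+1$, and $\lambda = \sqrt{q}$. Next I would recall the identification, also asserted in the text, that the Kronecker double cover of this polarity graph is exactly the Levi graph $L_q$ of the projective plane. These two observations put us precisely in the hypotheses of Theorem~\ref{thm:sensitcover}.

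With the setup in place, the proof is essentially a substitution. Theorem~\ref{thm:sensitcover} gives $\sigma(G) > (d-\lambda)/2$ for $G$ the Kronecker double cover of any $(n,d,\lambda)$-graph. Plugging in $d = q+1$ and $\lambda = \sqrt{q}$ yields
\[
\sigma(L_q) > \frac{(q+1) - \sqrt{q}}{2},
\]
which is exactly the claimed bound. So the corollary is a one-line consequence once the two structural facts are in hand.

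The only genuine content, therefore, is verifying that the polarity graph really has the advertised eigenvalue bound and that its double cover is $L_q$; but both of these are quoted from the literature in the surrounding text, so I would simply cite them rather than reprove them. The main (and essentially only) obstacle is bookkeeping: making sure the spectral data $(q^2+q+1, q+1, \sqrt{q})$ is applied with $d$ and $\lambda$ in the correct slots of Theorem~\ref{thm:sensitcover}, and that the ``nontrivial eigenvalue at most $\lambda$'' convention matches the polarity graph's spectrum (whose nontrivial eigenvalues are indeed of absolute value $\sqrt{q}$, while $\pm d$ are excluded). Since the graph may carry loops, I would note that Theorem~\ref{thm:sensitcover} is stated to allow loops, so no extra care is needed there. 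Hence the proof reduces to the single display above, and I would write:
\begin{proof}
The polarity graph of $P(2,q)$ is a $(q^2+q+1,q+1,\sqrt{q})$-graph and $L_q$ is its Kronecker double cover. Thus Theorem~\ref{thm:sensitcover} gives $\sigma(L_q) > (q+1-\sqrt{q})/2$.
\end{proof}
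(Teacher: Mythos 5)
Your proposal is correct and matches the paper's own derivation exactly: the paper also obtains Corollary~\ref{cor:pp} by noting that the polarity graph of $P(2,q)$ is a $(q^2+q+1,q+1,\sqrt{q})$-graph whose Kronecker double cover is $L_q$, and then applying Theorem~\ref{thm:sensitcover}. Nothing further is needed.
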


Note that we do not need the projective plane to be Desarguesian for the proof to work (see, e.g., \cite{TaitTimmons}).

Using the list of small vertex-transitive graphs~\cite{list,royle_data,HoG}, we verified that each vertex-transitive, bipartite $G$ on at most 47 vertices and with girth at least $6$ has $\sigma(G)\leq 2$. Also compare sequences A185959 and A006800 in~\cite{EOIS} for the numbers of Cayley and transitive graphs, respectively.
In particular, examination by computer shows that $\sigma(L_q)=2$ for $q\leq 4$ and $\sigma(L_q)=3$ for $q=5,7$. Indeed, $L_2$ is the well-known \emph{Heawood graph} and yields the smallest transitive bipartite graph with girth $6$ and $\sigma=2$. The 62-vertex Levi graph of the Desarguesian projective plane $P(2,5)$ is the smallest transitive graph with $\sigma=3$ that we know of. In particular, Corollary~\ref{cor:pp} yields $\sigma(L_{8})\geq 4$ and the computer finds that this is an equality.

Theorem~\ref{thm:sensitcover} can also be used to provide bipartite Cayley graphs with high sensitivity and arbitrarily high girth. Indeed, a famous construction of Ramanujan graphs by~\cite{lub-88} gives families of unbounded girth. For $p,q$ distinct primes congruent to $1$ modulo $4$ they construct a $(p+1)$-regular Cayley graph $X^{p,q}$ in which all nontrivial eigenvalues have absolute value at most $2\sqrt{p}$. Its properties depend on the Legendre symbol of $p$ and $q$. Namely, if $\left(\frac{p}{q}\right)=-1$, then $X^{p,q}$ is a bipartite Cayley graph of PGL$(2,q)$ (of order $q(q^2-1)$) with girth at least $4\log_pq-\log_q4$.
If $\left(\frac{p}{q}\right)=1$, then $X^{p,q}$ is a non-bipartite Cayley graph of PSL$(2,q)$ (of order $\frac{q(q^2-1)}{2}$) with girth at least $2\log_pq$. In this case, we denote by $Y^{p,q}$ the Kronecker double cover of $X^{p,q}$ 
Since the Kronecker double cover of a Cayley graph is a bipartite Cayley graph (see Remark \ref{rm:cayleydouble}), $Y^{p,q}$ is a bipartite Cayley graph of arbitrary high girth and  Theorem~\ref{thm:sensitcover} yields: 

\begin{corollary}\label{cor:Ramanujan}
The graph $Y^{p,q}$ satisfies $\sigma(Y^{p,q})> (p+1)/2 - \sqrt{p}.$
\end{corollary}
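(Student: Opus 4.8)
The plan is to recognize that Corollary~\ref{cor:Ramanujan} is essentially a direct specialization of Theorem~\ref{thm:sensitcover} to the Ramanujan graphs $X^{p,q}$ of Lubotzky, Phillips, and Sarnak, so the entire task is a matter of correctly identifying the parameters $(n,d,\lambda)$ and verifying that the bipartite graph $Y^{p,q}$ we want to analyze is indeed a Kronecker double cover of an $(n,d,\lambda)$-graph. First I would recall the two cases governing $X^{p,q}$ according to the Legendre symbol $\left(\frac{p}{q}\right)$. The crucial structural input is that in \emph{both} cases there is an underlying $(p+1)$-regular graph whose nontrivial eigenvalues are bounded in absolute value by $2\sqrt{p}$; this is exactly the Ramanujan property, so we take $d=p+1$ and $\lambda=2\sqrt{p}$.

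Next I would split along the two cases in the definition of $Y^{p,q}$. When $\left(\frac{p}{q}\right)=1$, the graph $X^{p,q}$ is non-bipartite and $Y^{p,q}$ is by definition its Kronecker double cover, so Theorem~\ref{thm:sensitcover} applies verbatim with $(n,d,\lambda)=\bigl(\tfrac{q(q^2-1)}{2},\,p+1,\,2\sqrt{p}\bigr)$, giving $\sigma(Y^{p,q})>(d-\lambda)/2=(p+1-2\sqrt{p})/2=(p+1)/2-\sqrt{p}$. When $\left(\frac{p}{q}\right)=-1$, the graph $X^{p,q}=Y^{p,q}$ is already bipartite, and here the mild subtlety is that Theorem~\ref{thm:sensitcover} is phrased for a Kronecker double cover; I would resolve this by observing that a bipartite $(p+1)$-regular Ramanujan graph $X^{p,q}$ can itself be viewed as (or treated exactly like) the Kronecker double cover of a natural $(n,d,\lambda)$-graph on half the vertices, so that the expander-mixing argument inside the proof of Theorem~\ref{thm:sensitcover} runs identically. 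In either case the conclusion is the same inequality $\sigma(Y^{p,q})>(p+1)/2-\sqrt{p}$, and combining the two cases finishes the proof.

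The main obstacle — though it is a minor one — is precisely this bookkeeping in the bipartite case: making sure that when $X^{p,q}$ is already bipartite, we may still legitimately invoke Theorem~\ref{thm:sensitcover} rather than re-deriving the expander mixing estimate from scratch. The cleanest route is to note that Theorem~\ref{thm:sensitcover} only uses of its hypothesis graph that it is an $(n,d,\lambda)$-graph and that $G$ is its double cover; a bipartite Ramanujan graph is the double cover of a quotient with the same spectral gap, so the two statements coincide. Everything else is a substitution of $d=p+1$ and $\lambda=2\sqrt{p}$ into the bound from Theorem~\ref{thm:sensitcover}, together with the remark (already established via Remark~\ref{rm:cayleydouble}) that $Y^{p,q}$ is a bipartite Cayley graph, which is what makes the statement meaningful in the context of the paper. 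No genuinely new estimate is needed beyond what Theorem~\ref{thm:sensitcover} already provides.
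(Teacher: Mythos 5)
Your proposal matches the paper exactly: the paper gives no separate proof of Corollary~\ref{cor:Ramanujan} beyond substituting $d=p+1$ and $\lambda=2\sqrt{p}$ into Theorem~\ref{thm:sensitcover}, with the same case split on $\left(\frac{p}{q}\right)$. Your extra care in the bipartite case is welcome (the paper glosses over it entirely), though the safe justification there is that the proof of Theorem~\ref{thm:sensitcover} only uses the expander mixing estimate between the two sides, which applies to the bipartite $(p+1)$-regular $X^{p,q}$ directly; the blanket claim that a bipartite Ramanujan graph \emph{is} the Kronecker double cover of a quotient is not true of bipartite graphs in general and would need an argument.
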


Interestingly, the bound of Corollary \ref{cor:Ramanujan} depends on the degree of the graph, but not on the number of vertices of the graph $Y^{p,q}$.

There are many graphs with similar properties to the ones in the preceding corollaries. Indeed, by a result of~\cite{alo-94} for every $0<\delta<1$ there exists $c_{\delta}$ such that
for any group $\Gamma$ of order $n$, the Cayley graph $\Cay(\Gamma,C)$ with
respect to a random set $C\subseteq\Gamma$ of size $c_{\delta}\log n$
has $\lambda\leq(1-\delta)d$ almost surely. Moreover, it is known that the girth of $\Cay(\Gamma,C)$ is large with high probability for many groups, see~\cite{gam-09}. As a direct consequence of Theorem~\ref{thm:sensitcover} we have the following lower bound on $\sigma$.

\begin{corollary}\label{cor:random}
For every $0 < \delta < 1$ there is a $c_\delta > 0$ such that the
following holds. Let $\Gamma$ be any group of order $n$ and let $G = \Cay(\Gamma \times \mathbb Z_2,C \times \{1\})$ be the Cayley graph of $\Gamma\times \mathbb Z_2$ with
respect to a set $C$ of $c_\delta \log(n)$
random elements of $\Gamma$. Then $\sigma(G) \geq \delta d / 2$ almost surely.
\end{corollary}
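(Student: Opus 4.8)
The plan is to read the corollary off directly from Theorem~\ref{thm:sensitcover} once the random Cayley graph has been recognized as an $(n,d,\lambda)$-graph with sufficiently small $\lambda$. Write $d=c_\delta\log(n)$ and regard $K=\Cay(\Gamma,C)$ as a $d$-regular graph on the $n$ elements of $\Gamma$. Since $K$ need not be bipartite, the object to which Theorem~\ref{thm:sensitcover} actually applies is its Kronecker double cover, which by Remark~\ref{rm:cayleydouble} is again a (bipartite) Cayley graph, namely $\Cay(\Gamma\times\mathbb{Z}_2,C\times\{1\})$. It is this double cover that plays the role of $G$ in the bound.

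First I would invoke the result of~\cite{alo-94}: for the prescribed $\delta$ there is a constant $c_\delta$ so that, for a random generating set $C$ of size $d=c_\delta\log(n)$, every nontrivial eigenvalue of $K$ satisfies $\lambda\leq(1-\delta)d$ almost surely. This is the sole probabilistic input; everything downstream is deterministic given this spectral estimate. Feeding the resulting $(n,d,\lambda)$-graph into Theorem~\ref{thm:sensitcover} then yields
\[
\sigma(G) > \frac{d-\lambda}{2} \geq \frac{d-(1-\delta)d}{2} = \frac{\delta d}{2},
\]
and since this chain of inequalities holds precisely when the spectral estimate holds, it holds almost surely. As the first inequality is strict, we obtain $\sigma(G)\geq\delta d/2$, as claimed.

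The main difficulty is not the computation but the bookkeeping around the random model and the passage through the double cover. One must check that the random-set model of~\cite{alo-94} (symmetry of $C$, the treatment of the identity, the possibility of loops, and the precise meaning of \emph{almost surely} as $n\to\infty$ with $d$ growing like $\log n$) is compatible with the notion of $(n,d,\lambda)$-graph admitted in Theorem~\ref{thm:sensitcover}, which explicitly allows loops and so is flexible enough to absorb these technicalities. One must also be careful that Theorem~\ref{thm:sensitcover} bounds $\sigma$ of the double cover and not of $K$ itself: the single non-bipartite expander $K$ has independence number only about $\lambda n/(d+\lambda)$, and the expander mixing lemma applied to a set $U$ of size just above $\alpha(K)$ gives an average induced degree of roughly $d|U|/n-\lambda$, which is nonpositive in that regime and hence useless. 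This is exactly why one must work with $U$ of size exceeding half the vertices of the double cover, and why passing to $\Cay(\Gamma\times\mathbb{Z}_2,C\times\{1\})$ is essential rather than cosmetic.
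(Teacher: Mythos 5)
Your proposal is correct and follows essentially the same route as the paper, which presents the corollary as a direct combination of the spectral bound of~\cite{alo-94} with Theorem~\ref{thm:sensitcover}. Your additional care about passing to the Kronecker double cover (via Remark~\ref{rm:cayleydouble}) when $\Cay(\Gamma,C)$ is not bipartite is a point the paper leaves implicit, but it matches the intended reading.
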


One may wonder if the above constructions give \emph{minimal} Cayley graphs, i.e., with respect to inclusion-minimal generating sets. However,  we observe the following:

\begin{lemma}\label{lem:min}
 Let $G$ be an $(n,d,\lambda)$-graph. If $G \times K_2$ is a minimal Cayley graph, then $\lambda\geq d-4$.
 
\end{lemma}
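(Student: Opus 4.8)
The plan is to exhibit a specific eigenvalue forced by minimality that is close to $d$. The key observation is that a minimal Cayley graph $G=\Cay(\Gamma,C)$ has a generating set $C$ that is inclusion-minimal, so for any $c\in C$ the set $C\setminus\{c\}$ fails to generate $\Gamma$; equivalently, the subgroup $H=\langle C\setminus\{c\}\rangle$ is proper, hence of index at least $2$. I would first fix one generator $c\in C$ and consider this proper subgroup $H$ together with its cosets. Since $c$ is an involution or not, I must be a bit careful, but in either case the cosets of $H$ partition $\Gamma$, and the edges coming from generators in $C\setminus\{c\}$ stay within each coset, while the edges coming from $c$ (and $c^{-1}$ if $c^2\neq 1$) move between cosets in a controlled way.

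Next I would build a test vector to plug into the Rayleigh quotient for the adjacency matrix $A$ of $G$. The natural choice is a $\pm 1$ (or suitably weighted) vector that is constant on cosets of $H$, designed to be orthogonal to the all-ones vector $\mathbf{1}$ (the eigenvector for the trivial eigenvalue $d$). Concretely, if $[\Gamma:H]=m\geq 2$, I would assign values to the cosets summing to zero; the generators inside each coset contribute $+$ along every edge since the vector is constant on $H$, and only the $c$-edges between cosets can contribute negatively. The number of $c$-edges per vertex is at most $2$ (one for $c$, one for $c^{-1}$), so the Rayleigh quotient $\frac{x^{\top}Ax}{x^{\top}x}$ is at least $d-4$: each vertex gets $+\deg$ from the $\geq d-2$ within-coset neighbors and loses at most $4$ from the at most $2$ crossing edges (each crossing edge can flip a $+1$ contribution to $-1$, a swing of $2$). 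This forces a nontrivial eigenvalue of $A$ at least $d-4$, i.e.\ $\lambda\geq d-4$.

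The main obstacle will be handling the coset structure cleanly when $c$ has order greater than $2$, since then $c$ and $c^{-1}$ are distinct generators and the crossing edges may land in several different cosets rather than just one; I would need to argue that the test vector can still be chosen so that the total negative contribution per vertex is bounded by $4$, possibly by taking $m=2$ (using that some maximal subgroup containing $\langle C\setminus\{c\}\rangle$ has a subgroup of index $2$, or by passing to the quotient by the normal core) and a two-valued vector $\pm 1$. A secondary subtlety is that the graph may have loops, which only add to the diagonal and contribute positively to the Rayleigh quotient, so they do not hurt the bound. Once the index-$2$ reduction is secured, the counting of crossing edges is routine: each vertex has at most one $c$-edge and at most one $c^{-1}$-edge leaving its coset, giving the clean constant $4$.
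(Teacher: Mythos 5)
Your route is genuinely different in form from the paper's, though it rests on the same spectral principle. The paper does not build a test vector at all: it observes that minimality means deleting the $c$- and $c^{-1}$-edges (a $1$- or $2$-factor $F$) disconnects the graph, takes $S$ to be a component of $G-F$ and $T$ its complement (so $|T|\ge n/2$ and every $S$--$T$ edge lies in $F$, giving $e(S,T)\le 2|S|$), and then invokes the expander mixing lemma, $e(S,T)\ge (d-\lambda)|S||T|/n\ge (d-\lambda)|S|/2$, to conclude $\lambda\ge d-4$ in three lines. Your Rayleigh-quotient computation is essentially a hand-rolled proof of the one instance of the mixing lemma that is needed; it is more self-contained but has to re-derive what the lemma packages. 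Both arguments hinge on the same fact you identify: each vertex has at most two edges leaving the subgroup coset structure determined by $C\setminus\{c\}$.

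There is, however, a concrete gap in your execution. Your per-vertex accounting (``$\ge d-2$ within-coset neighbours contribute $+1$ each, at most $2$ crossing edges cost $2$ each'') only works for a $\pm1$-valued vector, and such a vector is orthogonal to $\mathbf{1}$ only if the two value classes have equal size. Your proposed remedy --- reduce to index $2$ via a subgroup of index $2$ containing $\langle C\setminus\{c\}\rangle$, or via the normal core --- fails: many groups admitting minimal Cayley graphs have no index-$2$ subgroup at all (any simple group, or $\mathbb{Z}_3^r$). The correct fix is to drop the $\pm1$ normalization: put the value $|T|$ on one coset $S$ of $H=\langle C\setminus\{c\}\rangle$ and $-|S|$ on its complement $T$, which is automatically orthogonal to $\mathbf{1}$, and replace the ``swing of $2$'' bookkeeping by the inequality $2x_ux_v\ge -(x_u^2+x_v^2)$ on each crossing edge; since every vertex lies on at most two crossing edges this still yields $x^{\top}Ax\ge (d-4)x^{\top}x$. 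With that repair (and the trivial observation that the claim is vacuous for $d\le 4$, so the eigenvalue you produce really is nontrivial), your argument goes through; as written, the reduction step you lean on is false.
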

\begin{proof}
 Since $G \times K_2 =\Cay(\Gamma,C)$ is minimal, removing any generator from $C$ creates a disconnected graph. Thus, there is a $1$- or $2$-factor $F$, whose edge-removal disconnects $G$. Let $S$ be the set of vertices in one of these connected components and $T = V(G \times K_2) - S$  (observe that $|T| \geq n$). Denote by $0,1$ the vertices of $K_2$ and write $S_i = S \cap (G \times \{i\})$ and $T_i = T \cap (G \times \{i\})$  for $i = 0,1$. We have that $|S_0| = |S_1|$, $|T_0| = |T_1|$ and, then, $|S_0| + |T_1| = n$. Consider $S' = \{s \, \vert \, (s,0) \in S_0\} \subseteq V(G)$ and $T' = \{t \, \vert \, (t,1) \in T_1\} \subseteq V(G)$  (observe that $|T'| \geq n/2$).   Thus, we get $e(S',T')\leq 2 |S'|$. On the other hand, by Lemma \ref{lm:eml}, we get that 
 \[ e(S',T') \geq (d-\lambda)\frac{|S'||T'|}{n} \geq (d-\lambda)\frac{|S'|}{2}.\]
 The result follows from both inequalities.
\end{proof}
%
%

The Levi graphs of projective planes from Corollary~\ref{cor:pp} are Kronecker double covers of $(q^2+2+1,q+1,\sqrt{q})$ graphs. Hence, by Lemma~\ref{lem:min}, we deduce that $L_q$ is not a minimal Cayley graph for $q > 5$.  So Levi graphs are not minimal Cayley graphs whenever $\sigma > 3$. 
Similarly, one can see that the graphs $Y^{p,q}$ in Corollary \ref{cor:Ramanujan} with $p > 5$ cannot provide minimal Cayley graphs. 
Since the sensitivity of a $d$-regular bipartite graph is upper bounded by $d-1$, we have that this construction does not yield minimal Cayley graphs with $\sigma >5$. 

%

So while Corollaries~\ref{cor:pp} and~\ref{cor:Ramanujan} give rise to families of Cayley graphs of $\kappa \equiv 1$ and unbounded $\sigma$, one might still believe that minimal Cayley graphs could satisfy Huang's lower bound with tightness. 
However, the M\"obius-Kantor graph $G(8,3)$ is bipartite and the Cayley graph $\Cay(P_1,\{X,Y,Z\})$, where
$$X=\begin{pmatrix}0&1\\1&0\end{pmatrix}Y=\begin{pmatrix}0&-i\\i&0\end{pmatrix}Z=\begin{pmatrix}1&0\\0&-1\end{pmatrix}$$ and $P_1=\{\pm I,\pm iI,\pm X,\pm iX, \pm Y, \pm iY, \pm Z, \pm iZ\}<SU(2)$ is the \emph{(first) Pauli group}, see the left part of Figure~\ref{fig:Mobius}. This group can also be described as central product of $\mathbb{Z}_4$ with $D_4$. 
While $G(8,3)$ has girth $6$ one can check that $\sigma(G(8,3))=2>1=\lceil\sqrt{\kappa(G(8,3))}\rceil$. 

\begin{figure}[ht]
\includegraphics[width=.9\textwidth]{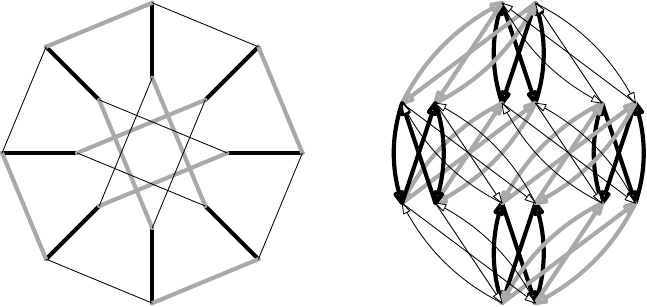}
\caption{Two Cayley graphs of the Pauli group. Left: the M\"obius-Kantor graph $G(8,3)$ as $\Cay(P_1,\{X,Y,Z\})$. Right: the lexicographic product $Q_3[\overline{K_2}]$ as $\Cay(P_1,\{iI,-iX,-iZ\})$, where $iI$ corresponds to the thick gray arcs.} \label{fig:Mobius}
\end{figure}

Indeed $G(8,3)$ is also isomorphic to both $\Cay(M_{16},C)$ and $\Cay(QD_{16},C)$, where $M_{16}=\{x^ry^s\mid x^8=y^2=e, yx=x^5y\}$ is the modular group of order $16$, $QD_{16}=\{x^ry^s\mid x^8=y^2=e, yx=x^3y\}$ is the quasidihedral group of order $16$, and  $C = \{x,y\}$. Yet another way of representing the M\"obius-Kantor graph is as the dihedrant $\Cay(D_8,\{b,ab,a^3b\})$. However, this generating set is not minimal. For further information on this remarkable graph, see~\cite{Pisanski}. 

Another example is the lexicographic product $Q_3[\overline{K_2}]$. One way of representing  this graph is as $\Cay(P_1,\{iI,-iX,-iZ\})$ where again $P_1$ is the Pauli group, see the right part of Figure~\ref{fig:Mobius}. Another representation is $Q_3[\overline{K_2}]\cong \Cay(\mathbb{Z}_2^2\times\mathbb{Z}_4,\{(1,0,1),(0,1,1),(0,0,1)\})$. One calculates $\sigma(Q_3[\overline{K_2}])=4>2=\lceil\sqrt{\kappa(Q_3[\overline{K_2}])}\rceil$. We believe that the family of graphs $G_m := Q_m[\overline{K_2}]$ with $m \in \mathbb Z^+$ is a good candidate to provide minimal Cayley graphs where the difference $\sigma(G_m)-\sqrt{\kappa(G_m)}$ is unbounded. 

All this leads to a question analogous to $\chi$-boundedness~\cite{Gya-87} or $\tau$-boundedness~\cite{Fel-20}.

\begin{question}[$\sigma$-boundedness]\label{quest:min}
 Is there a function $f$ such that for every minimal bipartite Cayley graph  $G$, we have $\sigma(G)\leq f(\kappa(G))$?
\end{question}


\section{Conclusions}
Most of the paper is about Huang's lower bound (Proposition~\ref{pr:deltasubgraph}) being tight, i.e., if a bipartite Cayley graph contains a largest cube $Q_d$, then there is an induced subgraph of maximum degree at most $\lceil\sqrt{d}\rceil$ on more than half the vertices. 
We show that this holds for some dihedrants, the star graphs, and some tight groups, where these results can be seen as proving \emph{insensitivity}. We further prove the lower bound to be tight for large classes of Coxeter groups, and conjecture it for general Coxeter groups (Conjecture~\ref{conjecture}).
On the other hand we show that there are also cube-free graphs of unbounded sensitivity, e.g., Levi graphs of projective planes. 
 A curiosity is that the latter class as well as the first family of insensitive graphs are dihedrants with respect to non-minimal generating sets. While we have provided insensitive Cayley graphs with respect to minimal generating sets, 
 it remains open if there are bipartite Cayley graphs with respect to a minimal generating set that have bounded $\kappa$ and unbounded $\sigma$ (Question~\ref{quest:min}).

Further, we believe that the $k$-imbalance $\iota_k$, i.e., the coloring parameter associated to sensitivity $\sigma$ deserves further investigation. Indeed, apart from cube-like Coxeter groups and in particular $Q_d$, our results on star graphs and  tight groups can be read in terms of this stronger parameter.

\bigskip

Let us finally conclude with some thoughts on non-bipartite Cayley graphs. Since many things already do not work in the bipartite case, let us go back to abelian groups. 
The result of~\cite{potechin2020conjecture} gives a lower bound on the induced maximum degree when more than half of the vertices are taken, but in a non-bipartite Cayley graph $\alpha$ is less than half the vertices.
We show that the stronger version is false, i.e., there abelian groups with Cayley graphs of unbounded degree but $\sigma\equiv 1$. 

\begin{theorem}\label{thm:Z3}
 We have $\sigma(\Cay(\mathbb{Z}_3^r,\{(1,\ldots,0),(0,1,\ldots,0),\ldots,(0,\ldots,1)\}))=1$, for all positive integers $r$.
\end{theorem}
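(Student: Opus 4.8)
The plan is to first turn the group-theoretic description into a concrete graph and then build the required set by induction on $r$. With the undirected convention, $x$ and $y$ are adjacent in $G_r:=\Cay(\mathbb{Z}_3^r,\{e_1,\ldots,e_r\})$ exactly when $y-x\in\{\pm e_1,\ldots,\pm e_r\}$, i.e. when they differ in a single coordinate; thus $G_r$ is the $r$-fold Cartesian product of triangles and satisfies $G_r\cong G_{r-1}\square K_3$, a $2r$-regular graph on $3^r$ vertices. The $3^{r-1}$ lines in the last coordinate partition $V(G_r)$ into triangles, so every independent set meets each such triangle at most once, giving $\alpha(G_r)=3^{r-1}$ (attained by any hyperplane $\{x:\sum a_ix_i=c\}$ with all $a_i\neq 0$). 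Since a vertex set on more than $\alpha(G_r)$ vertices is never independent, $\sigma(G_r)\geq 1$ is automatic, and the whole task reduces to exhibiting a set $K$ with $|K|>3^{r-1}$ inducing a subgraph of maximum degree at most $1$.

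The naive attempt of enlarging a maximum independent set fails, because adding any vertex to such a hyperplane forces the removal of $r-1$ of its neighbours. So instead I would prove by induction the stronger statement $P(r)$: the graph $G_r$ carries a maximum independent set $I_r=\{x:\ell_r(x)=t_r\}$ (for some functional $\ell_r=\sum a_ix_i$ with all $a_i\neq 0$ and some $t_r\in\mathbb{Z}_3$) together with a set $B_r$ of maximum degree at most $1$ satisfying $|B_r|\geq 3^{r-1}+1$ and $I_r\cap B_r=\emptyset$. The base case $r=1$ is $K_3$ with $I_1=\{0\}$ and $B_1=\{1,2\}$. For the inductive step, using the layering of $G_r\cong G_{r-1}\square K_3$ by the last coordinate, I would set $B_r=(I_{r-1}\times\{0\})\cup(I_{r-1}\times\{1\})\cup(B_{r-1}\times\{2\})$. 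Here the two copies of $I_{r-1}$ in layers $0$ and $1$ form an induced matching: each such vertex is isolated inside its independent layer and meets no layer-$2$ vertex because $I_{r-1}\cap B_{r-1}=\emptyset$, while $B_{r-1}\times\{2\}$ retains its degree bound from the induction. This yields $|B_r|=2\cdot 3^{r-2}+|B_{r-1}|\geq 3^{r-1}+1$.

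The crux — and the reason the hypothesis must be strengthened to carry a \emph{disjoint pair} rather than a single large set — is producing a new maximum independent set $I_r$ that avoids $B_r$. I would take $I_r$ to be the graph $\{(u,\psi(u))\}$ of a proper $3$-colouring $\psi\colon V(G_{r-1})\to\mathbb{Z}_3$, which is independent precisely because $\psi$ is proper. Choosing $\psi(u)=\ell_{r-1}(u)+(2-t_{r-1})$ makes $\psi$ proper (since $\ell_{r-1}$ has nonzero coefficients) and accomplishes exactly the two things needed: on every $I_{r-1}$-position it returns the value $2$, so $I_r$ lies in layer $2$, where $B_r$ occupies only layers $0$ and $1$; and on every $B_{r-1}$-position it returns a value different from $2$, so $I_r$ also dodges the layer-$2$ part of $B_r$. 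Writing $\ell_r(x)=x_r-\ell_{r-1}(x_1,\ldots,x_{r-1})$ exhibits $I_r=\{\ell_r=2-t_{r-1}\}$ as a level set with all-nonzero coefficients, closing the induction. The statement then follows, since $B_r$ witnesses $\sigma(G_r)\leq 1$ and hence $\sigma(G_r)=1$. I expect the main difficulty to be precisely this bookkeeping: the construction of $B_r$ is forced and easy, but guaranteeing that a fresh maximum independent set can be peeled off disjointly from it is what makes the recursion go through, and it is the choice of the colouring $\psi$ that resolves it.
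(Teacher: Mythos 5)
Your proof is correct and follows essentially the same route as the paper's: an induction on $r$ over the layering $G_r\cong G_{r-1}\square K_3$ that carries a disjoint pair consisting of a degree-at-most-$1$ set of size $3^{r-1}+1$ and a maximum independent set, with the degree-$1$ set built exactly as in the paper (two matched copies of the independent set in two layers plus the old degree-$1$ set in the third, up to relabelling layers). The only divergence is how the fresh disjoint maximum independent set is produced -- the paper places the translates $B$, $B+e_1$, $B+2e_1$ of the previous independent set in the three layers, while you take the graph of a linear proper $3$-colouring, at the cost of strengthening the induction hypothesis to require the independent set to be a level set of a functional with all coefficients nonzero; both devices work.
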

\begin{proof}
 First note that $\alpha(\Cay(\mathbb{Z}_3^r,\{(1,\ldots,0),(0,1,\ldots,0),\ldots,(0,\ldots,1)\}))=3^{r-1}$. We show that there is a set of $3^{r-1}+1$ vertices inducing degree $1$ whose complement contains a maximum independent set, by induction on $r$. While the case $r=1$ is trivial, let us consider $r>1$ and take a set $A$ of $3^{r-2}+1$ vertices inducing degree $1$ and disjoint independent set $B$ of size $3^{r-2}$ both in $\Cay(\mathbb{Z}_3^{r-1},\{(1,\ldots,0),(0,1,\ldots,0),\ldots,(0,\ldots,1)\})$. Our solution for $\Cay(\mathbb{Z}_3^{r},\{(1,\ldots,0),(0,1,\ldots,0),\ldots,(0,\ldots,1)\})$ is $A'=A\times\{0\}\cup B\times\{1,2\}$. This set has size $3^{r-2}+1+2(3^{r-2})=3^{r-1}+1$ and induces degree $1$. Moreover, the set $B\times\{0\}\cup (B+(1,\ldots,0))\times\{1\}\cup (B+(2,\ldots,0))\times\{2\}$ induces a maximum independent set complementary to $A'$. \end{proof}

We do not know if there is a family of tripartite Cayley graphs playing the role of hypercubes with respect to $\sigma$. More generally, we wonder:
\begin{question}
 Is there an infinite family $\mathcal{G}$ of non-bipartite (minimal) sensitive Cayley graphs, i.e., is there a function $f$ such that $d\leq f(\sigma(G))$ for all $d$-regular $G\in \mathcal{G}$?
\end{question}



\subsubsection*{Acknowledgments} We thank Noga Alon for several discussions concerning Section~7. In particular, he provided us with Theorem \ref{thm:sensitcover} that led to Corollaries \ref{cor:Ramanujan},  \ref{cor:random} and an improvement of an earlier version of Corollary \ref{cor:pp} as well as several comments on minimal Cayley graphs which led us to Lemma \ref{lem:min}. We thank Gordon Royle for sharing with us the list of transitive graphs on up to 47 vertices \cite{list,royle_data,HoG}, Gabriel Verret to pointing to us to the list of cubic vertex-transitive graphs generated via~\cite{PPV13,PPV15}, and Anurag Bishnoi helping us with a first proof of Corollary~\ref{cor:pp}. We also would like to thank the anonymous referees for their valuable comments and suggestions.

The first author was partially supported by the Universidad de La Laguna MASCA project. The second author was partially supported by the French \emph{Agence nationale de la recherche} through project ANR-17-CE40-0015 and by the Spanish \emph{Ministerio de Econom\'ia,
Industria y Competitividad} through grant RYC-2017-22701. Both authors were partially supported by the Spanish MICINN through grant PID2019-104844GB-I00 and by the Universidad de La Laguna MACACO project.

\bibliography{lit}

\begin{thebibliography}{10}

\bibitem{ADK94}
{\sc S.~B. Akers, D.~Harel, and B.~Krishnamurthy}, {\em The Star Graph: An
  Attractive Alternative to the n-Cube}, IEEE Computer Society Press,
  Washington, DC, USA, 1994, pp.~145--152.

\bibitem{AK89}
{\sc S.~B. {Akers} and B.~{Krishnamurthy}}, {\em {A group-theoretic model for
  symmetric interconnection networks.}}, {IEEE Trans. Comput.}, 38 (1989),
  pp.~555--566.

\bibitem{alo-88}
{\sc N.~{Alon} and F.~R.~K. {Chung}}, {\em {Explicit construction of linear
  sized tolerant networks.}}, {Discrete Math.}, 72 (1988), pp.~15--19.

\bibitem{alo-94}
{\sc N.~{Alon} and Y.~{Roichman}}, {\em {Random Cayley graphs and expanders.}},
  {Random Struct. Algorithms}, 5 (1994), pp.~271--284.

\bibitem{alo-16}
{\sc N.~{Alon} and J.~H. {Spencer}}, {\em {The probabilistic method. 4th
  edition.}}, Hoboken, NJ: John Wiley \& Sons, 4th edition~ed., 2016.

\bibitem{alon2020unitary}
{\sc N.~{Alon} and K.~{Zheng}}, {\em Unitary signings and induced subgraphs of
  {C}ayley graphs of $\mathbb{Z}_2^{n}$}, arXiv:2003.04926,  (2020).

\bibitem{anorak}
{\sc A.~Bishnoi, S.~Mattheus, and J.~Schillewaert}, {\em Minimal multiple
  blocking sets}, Electron. J. Combin., 25 (2018), pp.~Paper No. 4.66, 14.

\bibitem{B84}
{\sc A.~{Bj\"orner}}, {\em {Orderings of {C}oxeter groups.}}
\newblock {Combinatorics and algebra, Proc. Conf., Boulder/Colo. 1983, Contemp.
  Math. 34, 175-195 (1984).}, 1984.

\bibitem{CombCoxeterBook}
{\sc A.~Bj\"{o}rner and F.~Brenti}, {\em Combinatorics of {C}oxeter groups},
  vol.~231 of Graduate Texts in Mathematics, Springer, New York, 2005.

\bibitem{BEZ90}
{\sc A.~{Bj\"orner}, P.~H. {Edelman}, and G.~M. {Ziegler}}, {\em {Hyperplane
  arrangements with a lattice of regions.}}, {Discrete Comput. Geom.}, 5
  (1990), pp.~263--288.

\bibitem{HoG}
{\sc G.~Brinkmann, K.~Coolsaet, J.~Goedgebeur, and H.~M\'elot}, {\em {House of
  Graphs: a database of interesting graphs.}}, {Disc. App. Math.}, 161 (2013),
  pp.~311--314.

\bibitem{CFGS88}
{\sc F.~R.~K. {Chung}, Z.~{F\"uredi}, R.~L. {Graham}, and P.~{Seymour}}, {\em
  {On induced subgraphs of the cube.}}, {J. Comb. Theory, Ser. A}, 49 (1988),
  pp.~180--187.

\bibitem{Coxeter}
{\sc H.~S.~M. Coxeter}, {\em The complete enumeration of finite groups of the
  form ${R}_i^2=({R}_i{R}_j)^{k_{ij}}=1$}, Journal of the London Mathematical
  Society, s1-10 (1935), pp.~21--25.

\bibitem{cplex2009v12}
{\sc I.~I. Cplex}, {\em V12. 1: User's manual for {C}{P}{L}{E}{X}},
  International Business Machines Corporation, 46 (2009), p.~157.

\bibitem{E06}
{\sc D.~{Eppstein}}, {\em {Cubic partial cubes from simplicial arrangements.}},
  {Electron. J. Comb.}, 13 (2006), pp.~Research Paper 79, 14.

\bibitem{nauru}
{\sc D.~{Eppstein}}, {\em {The many faces of the Nauru graph}},
  \url{https://11011110.github.io/blog/2007/12/12/many-faces-of.html},  (2007).

\bibitem{Fel-20}
{\sc S.~{Felsner}, W.~{Hochst\"attler}, K.~{Knauer}, and R.~{Steiner}}, {\em
  {Complete acyclic colorings.}}, {Electron. J. Comb.}, 27(2) (2020), pp.~research
  paper p2.40, 20.

\bibitem{gam-09}
{\sc A.~{Gamburd}, S.~{Hoory}, M.~{Shahshahani}, A.~{Shalev}, and
  B.~{Vir\'ag}}, {\em {On the girth of random Cayley graphs.}}, {Random Struct.
  Algorithms}, 35 (2009), pp.~100--117.

\bibitem{GAP4}
{\sc The GAP~Group}, {\em {GAP -- Groups, Algorithms, and Programming, Version
  4.11.0}}, 2020.

\bibitem{Gya-87}
{\sc A.~{Gy\'arf\'as}}, {\em {Problems from the world surrounding perfect
  graphs.}}, {Zastosow. Mat.}, 19 (1987), pp.~413--441.

\bibitem{Haemers}
{\sc W.~H. Haemers}, {\em Interlacing eigenvalues and graphs}, Linear Algebra
  Appl., 226/228 (1995), pp.~593--616.

\bibitem{list}
{\sc D.~Holt and G.~Royle}, {\em A census of small transitive groups and
  vertex-transitive graphs}, J. Symbolic Comput., 101 (2020), pp.~51--60.

\bibitem{huang2019induced}
{\sc H.~Huang}, {\em Induced subgraphs of hypercubes and a proof of the
  sensitivity conjecture}, Ann. of Math. (2), 190 (2019), pp.~949--955.

\bibitem{LV80}
{\sc M.~{Las Vergnas}}, {\em {Convexity in oriented matroids}}, {J. Comb.
  Theory, Ser. B}, 29 (1980), pp.~231--243.

\bibitem{verret2020counterexamples}
{\sc F.~Lehner and G.~Verret}, 
{\em Counterexamples to "{A} conjecture on
  induced subgraphs of {C}ayley graphs"}, Ars Mathematica Contemporanea, 19(1)  (2020), pp.~77-82.
 


\bibitem{Loz-11}
{\sc E.~{Loz}, M.~{Ma\v{c}aj}, M.~{Miller}, J.~{\v{S}iagiov\'a},
  J.~{\v{S}ir\'a\v{n}}, and J.~{Tomanov\'a}}, {\em {Small vertex-transitive and
  Cayley graphs of girth six and given degree: An algebraic approach.}}, {J.
  Graph Theory}, 68 (2011), pp.~265--284.

\bibitem{lub-88}
{\sc A.~{Lubotzky}, R.~{Phillips}, and P.~{Sarnak}}, {\em {Ramanujan graphs.}},
  {Combinatorica}, 8 (1988), pp.~261--277.

\bibitem{Pisanski}
{\sc D.~{Maru\v{s}i\v{c}} and T.~{Pisanski}}, {\em {The remarkable generalized
  Petersen graph \(G(8,3)\).}}, {Math. Slovaca}, 50 (2000), pp.~117--121.

\bibitem{morris2020families}
{\sc J.~Morris and J.~Smolcic}, {\em Two families of graphs that are {C}ayley
  on nonisomorphic groups}, arXiv:2005.11585,  (2020).

\bibitem{Sensitivity}
{\sc N.~{Nisan} and M.~{Szegedy}}, {\em {On the degree of Boolean functions as
  real polynomials.}}, {Comput. Complexity}, 4 (1994), pp.~301--313.

\bibitem{potechin2020conjecture}
{\sc A.~{Potechin} and H.~Y. {Tsang}}, {\em {A Conjecture on Induced Subgraphs
  of {C}ayley Graphs}}, arXiv:2003.13166,  (2020).

\bibitem{PPV13}
{\sc P.~{Poto\v{c}nik}, P.~{Spiga}, and G.~{Verret}}, {\em {Cubic
  vertex-transitive graphs on up to 1280 vertices}}, {J. Symb. Comput.}, 50
  (2013), pp.~465--477.

\bibitem{PPV15}
{\sc P.~{Poto\v{c}nik}, P.~{Spiga}, and G.~{Verret}}, {\em {Bounding the order
  of the vertex-stabiliser in 3-valent vertex-transitive and 4-valent
  arc-transitive graphs}}, {J. Comb. Theory, Ser. B}, 111 (2015), pp.~148--180.

\bibitem{royle_data}
{\sc G.~Royle and D.~Holt}, {\em Vertex-transitive graphs on fewer than 48
  vertices}, Sept. 2020.
\newblock https://doi.org/10.5281/zenodo.4010122.

\bibitem{EOIS}
{\sc N.~J.~A. Sloane and S.~Plouffe}, {\em The encyclopedia of integer
  sequences}, Academic Press, Inc., San Diego, CA, 1995.

\bibitem{sage}
{\sc W.~Stein et~al.}, {\em {S}age {M}athematics {S}oftware ({V}ersion 9.0)},
  The Sage Development Team, 2020.
\newblock {\tt http://www.sagemath.org}.

\bibitem{TaitTimmons}
{\sc M.~Tait and C.~Timmons}, {\em Independent sets in polarity graphs}, SIAM
  J. Discrete Math., 30 (2016), pp.~2115--2129.

\bibitem{SimpleGroup}
{\sc R.~A. Wilson}, {\em The finite simple groups}, vol.~251 of Graduate Texts
  in Mathematics, Springer-Verlag London, Ltd., London, 2009.

\end{thebibliography}
\bibliographystyle{my-siam}
\end{document}